\newcommand{\R}{\mathbb{R}}
\newcommand{\C}{\mathbb{C}}
\newcommand{\eat}[1]{}
\setlist{nolistsep,noitemsep}
\tikzset{fontscale/.style = {font=\relsize{#1}}}
\tikzset{main node/.style={circle,fill=blue!20,draw,minimum size=1cm,inner sep=0pt}}
\newtheorem{theorem}{Theorem}[section]
\newtheorem{lemma}[theorem]{Lemma}
\newtheorem{proposition}[theorem]{Proposition}
\newtheorem{conjecture}[theorem]{Conjecture}
\newtheorem{definition}[theorem]{Definition}
\newtheorem{remark}[theorem]{Remark}
\newtheorem{observation}[theorem]{Observation}
\newcommand{\Rn}{{\mathbb{R}^n}}
\newcommand{\lmin}{\lambda_{\min}}
\newcommand{\Z}{{\mathbb{Z}}}
\newcommand{\Rnn}{\R^{n\times n}}
\newcommand{\Lmod}{\tilde{L}}
\newcommand{\Lmodd}{\overset{\approx}{L}}
\newcommand{\Cchi}{\protect\raisebox{2pt}{$\chi$}}
\newcommand{\Pp}{P_n^*}
\newcommand{\Pvar}{T_{2n}}
\newcommand*{\dt}[1]{%
	\accentset{\mbox{\large\bfseries .}}{#1}}
\def\endthebibliography{%
\def\@noitemerr{\@latex@warning{Empty `thebibliography' environment}}%
\endlist
}
\begin{document}

\title{\LARGE \bf Optimal $k$-centers of a graph: a control-theoretic approach}

\author{Karim Shahbaz$^{\dagger}$, Madhu N. Belur$^{\dagger}$, Chayan Bhawal\thanks{Chayan Bhawal (bhawal@iitg.ac.in) is
in the Department of Electronics and Electrical Engineering, Indian Institute of Technology Guwahati, India.}, Debasattam Pal\thanks{Karim Shahbaz (karimshah05@gmail.com),
Madhu N. Belur (belur@ee.iitb.ac.in) and Debasattam Pal (debasattam@ee.iitb.ac.in ) are in the Department of Electrical Engineering, Indian Institute of Technology Bombay, India.}}


\maketitle

\begin{abstract}
In a network consisting of $n$ nodes, our goal is to identify the `most central’ $k$ nodes with respect to the proposed definitions of centrality. This concept finds applications in various scenarios, ranging from multi-agent problems where external communication (or leader nodes) must be identified, to more classical use cases like ambulance or facility location problems. Depending on the specific application, there exist several metrics for quantifying $k$-centrality, and the subset of the best $k$ nodes naturally varies based on the chosen metric. Closely related to graphs, we also explore notions of $k$-centrality within stochastic matrices. In this paper, we propose two metrics and establish connections to a well-studied metric from the literature (specifically for stochastic matrices). We prove these three  notions match for path-graphs. We then list a few more control-theoretic notions and compare these various notions for general randomly generated graphs. Our first metric involves maximizing the shift in the smallest eigenvalue of the Laplacian matrix. This shift can be interpreted as an improvement in the time constant when the RC circuit experiences leakage at certain $k$ capacitors. The second metric focuses on minimizing the Perron root of a principal sub-matrix of a stochastic matrix-an idea proposed and interpreted in the literature as `manufacturing consent'. The third one explores minimizing the Perron root of a perturbed (now super-stochastic) matrix, which can be seen as minimizing the impact of added stubbornness. It is important to emphasize that we consider applications (for example, facility location) when the notions of central ports are such that the set of the `best' $k$ ports does not necessarily contain the set of the $k$-1 ports. We apply our $k$-port selection metric to various network structures. Notably, we prove the equivalence of three definitions for a path graph and extend the concept of central port linkage beyond Fiedler vectors to other eigenvectors associated with path graphs.
\end{abstract}
%

\section{Introduction}\label{sec:intro}

Optimal port selection involves determining the most appropriate or ideal ports within a system or network to serve particular purposes.  This process pinpoints one or more highly significant nodes in the network, determined by a criterion of centrality measures. Optimal port selection has wide-ranging applications in fields such as networking, electrical circuits, logistics, transportation, data centers, robotics, and automation. Its significance extends to efficient/fastest communication, network resilience, resource allocation, security, influence and control.

In this paper, we consider only simple undirected, unweighted graphs, i.e., with no self-loops
and no multiple edges between any pairs of the vertices. Node centrality is a well-explored topic in the literature. Various centrality measures, including degree centrality \cite{ShawDegree}, closeness centrality \cite{SabidussiCloseness}, betweenness centrality \cite{AnthonisseBetween}, eigenvector centrality \cite{BonacichEigenCentral}, and Page-Rank \cite{PageRank}, are used to rank nodes based on different aspects of the network. In most of these centrality measures, the set of `best' $k$ ports, denoted as $S_k$, includes the `best' $k$-1 ports, set $S_{k-1}$. However, in our concept of centrality, the inclusion of $S_{k-1}$ in $S_k$ is not assumed. For example, consider Figure~\ref{fig:ChoosingInGraph}. It shows a graph of $11$ nodes and its corresponding RC network with resistance $r_c$ representing edges and nodes with capacitance $c$. It shows best one center node $5$  and best two centers nodes $3$ and $8$ w.r.t. to Defn~\ref{def:optim:all3}[i](MP\textbf{L}SE). Thus, the best one is not included in the best two centers. 

We propose two new notions of choosing the best $k$ ports within a network and compare with an existing one. One of the proposed notions quantifies the change in the smallest eigenvalue of the Laplacian matrix when the matrix is subjected to perturbation i.e. Maximized Perturbed Laplacian matrix Smallest Eigenvalue (\textit{Defn.~\ref{def:optim:all3}[i](MP\textbf{L}SE)}). Subsequently, we compare this selection with other metrics: Remarks~\ref{rem:interpretation:Submat} and \ref{rem:interpretation:Supmat} gives an interpretation of these metrics. 
\begin{itemize}
	\item \cite{BorkarNair} Minimized Sub-stochastic matrix Largest Eigenvalue based selection (\textit{Defn.~\ref{def:optim:all3}[ii](M\textbf{Sub}-LE)})
	\item Minimized Super-stochastic matrix Largest Eigenvalue based selection (\textit{Defn.~\ref{def:optim:all3}[iii](M\textbf{Sup}-LE)})
\end{itemize}

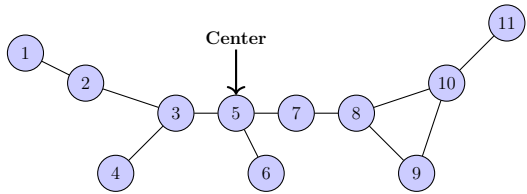
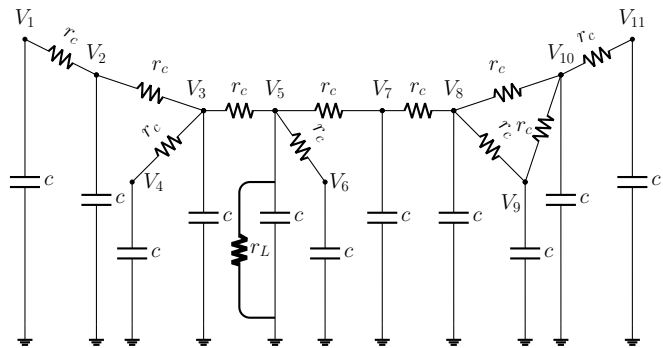
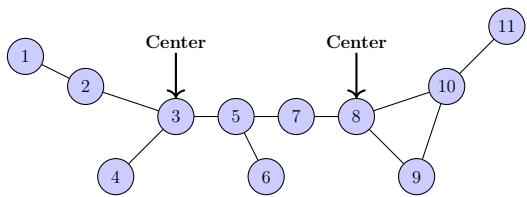
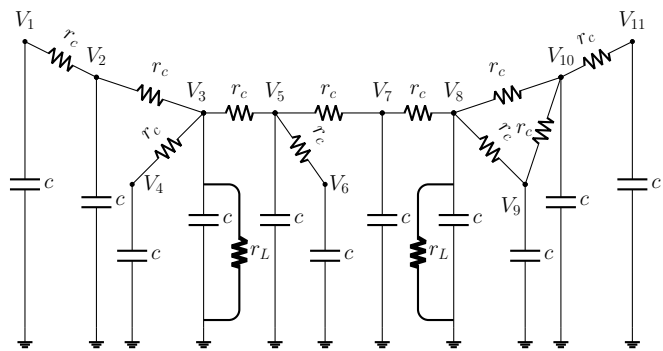
\begin{figure}[h!]
	\centering
	\captionsetup[subfigure]{margin=0.05\columnwidth}
	\begin{subfigure}{0.45\linewidth}\label{fig:G-center-1} 
		\centering
		\begin{tikzpicture} [scale=0.8, transform shape, every node/.style={scale=0.6}]
			\node[font = {\large},main node] (1) at (-1.5,1) {1};
			\node[font = {\large},main node] (2) at (-0.5,0.5) {2};
			\node[font = {\large},main node] (3) at (1,0) {3};
			\node[font = {\large},main node] (4) at (0,-1) {4};
			\node[font = {\large},main node] (5) at (2,0) {5};
			\node[font = {\large},main node] (6) at (2.5,-1) {6};
			\node[font = {\large},main node] (7) at (3,0) {7};
			\node[font = {\large},main node] (8) at (4,0) {8};
			\node[font = {\large},main node] (9) at (5,-1) {9};
			\node[font = {\large},main node] (10) at (5.5,0.5) {10};
			\node[font = {\large},main node] (11) at (6.5,1.5) {11};
			\path [-] (1) edge node {} (2);
			\path [-] (2) edge node {} (3);
			\path [-] (3) edge node {} (4);
			\path [-] (3) edge node {} (5);
			\path [-] (5) edge node {} (6);
			\path [-] (5) edge node {} (7);
			\path [-] (7) edge node {} (8);
			\path [-] (8) edge node {} (9);
			\path [-] (8) edge node {} (10);
			\path [-] (9) edge node {} (10);
			\path [-] (10) edge node {} (11);
			\node[font = {\large},] (14) at (2,1.25){\textbf{Center}};
			\path[thick, ->] (14) edge node {} (5);
		\end{tikzpicture} 
		\subcaption{\footnotesize Graph G with one center $5$ (center w.r.t. \textit{Defn.~\ref{def:optim:all3}[i](MP\textbf{L}SE))}} 
	\end{subfigure}		
	\begin{subfigure}{0.54\linewidth} \label{fig:RCNetwork_G-center-1} 
		\centering
		\ctikzset{bipoles/resistor/height=0.3}
		\ctikzset{bipoles/resistor/width=0.5}
		\begin{tikzpicture}[scale=0.95,transform shape, font = {\Large}, every node/.style={scale=0.5}]
			\draw (-1.5,1) to[R=$r_c$ , *-*] (-0.5, 0.5) to[R=$r_c$ , *-*] (1,0) to[R=$r_c$ , *-*] (2, 0) to[R=$r_c$ , *-*] (3.5, 0) to[R=$r_c$ , *-*] (4.5, 0) to[R=$r_c$ , *-*] (6, 0.5) to[R=$r_c$ , *-*] (7, 1); 
			\draw (0,-1) to[R=$r_c$ , *-*] (1,0) ; 
			\draw (2,0) to[R=$r_c$ , *-*] (2.7,-1) ; 
			\draw (4.5,0) to[R=$r_c$ , *-*] (5.5,-1) ;
			\draw (5.5,-1) to[R=$r_c$ , *-*] (6,0.5) ;
			
			\draw[rounded corners, thick] (2,-1) -- (1.5,-1) to[R=$r_L$] (1.5,-2.9) -- (2,-2.9);
			
			\draw (-1.5,1) to[C=$c$ ] (-1.5,-3);
			\draw (-0.5,0.5) to[C=$c$ ] (-0.5,-3);
			\draw (0,-1) to[C=$c$ ] (0,-3);
			\draw (1,0) to[C=$c$ ] (1,-3);
			\draw (2,0) to[C=$c$ ] (2,-3);
			\draw (2.7,-1) to[C=$c$ ] (2.7,-3);
			\draw (3.5,0) to[C=$c$ ] (3.5,-3);
			\draw (4.5,0) to[C=$c$ ] (4.5,-3);
			\draw (5.5,-1) to[C=$c$ ] (5.5,-3);
			\draw (6,0.5) to[C=$c$ ] (6,-3);
			\draw (7,1) to[C=$c$ ] (7,-3);
			
			
			\draw (-1.5,-3) node[ground]{};
			\draw (-0.5,-3) node[ground]{}; 
			\draw (0,-3) node[ground]{}; 
			\draw (1,-3) node[ground]{}; 
			\draw (2,-3) node[ground]{};
			\draw (2.7,-3) node[ground]{}; 
			\draw (3.5,-3) node[ground]{}; 
			\draw (4.5,-3) node[ground]{}; 
			\draw (5.5,-3) node[ground]{}; 
			\draw (6,-3) node[ground]{}; 
			\draw (7,-3) node[ground]{};

			\node[] at (-1.5,1.3) {$V_1$};
			\node[] at (-0.5,0.8) {$V_2$};
			\node[] at (0.9,0.3) {$V_3$};
			\node[] at (0.3,-1) {$V_4$};
			\node[] at (2,0.3) {$V_5$};
			\node[] at (2.9,-1) {$V_6$};
			\node[] at (3.5,0.3) {$V_7$};
			\node[] at (4.5,0.3) {$V_8$};
			\node[] at (5.3,-1.3) {$V_9$};
			\node[] at (6,0.8) {$V_{10}$};
			\node[] at (7,1.3) {$V_{11}$};
			
			\ctikzset{resistor = american}
		\end{tikzpicture} 
		\subcaption{\footnotesize An analogous RC network of G and leakage resistances across capacitors at node $5$ and the ground.} 
	\end{subfigure}
	
	\begin{subfigure}{0.45\linewidth}\label{fig:G-centers-2} 
		\centering
		\begin{tikzpicture} [scale=0.8, transform shape, every node/.style={scale=0.6}]
			\node[font = {\large},main node] (1) at (-1.5,1) {1};
			\node[font = {\large},main node] (2) at (-0.5,0.5) {2};
			\node[font = {\large},main node] (3) at (1,0) {3};
			\node[font = {\large},main node] (4) at (0,-1) {4};
			\node[font = {\large},main node] (5) at (2,0) {5};
			\node[font = {\large},main node] (6) at (2.5,-1) {6};
			\node[font = {\large},main node] (7) at (3,0) {7};
			\node[font = {\large},main node] (8) at (4,0) {8};
			\node[font = {\large},main node] (9) at (5,-1) {9};
			\node[font = {\large},main node] (10) at (5.5,0.5) {10};
			\node[font = {\large},main node] (11) at (6.5,1.5) {11};
			\path [-] (1) edge node {} (2);
			\path [-] (2) edge node {} (3);
			\path [-] (3) edge node {} (4);
			\path [-] (3) edge node {} (5);
			\path [-] (5) edge node {} (6);
			\path [-] (5) edge node {} (7);
			\path [-] (7) edge node {} (8);
			\path [-] (8) edge node {} (9);
			\path [-] (8) edge node {} (10);
			\path [-] (9) edge node {} (10);
			\path [-] (10) edge node {} (11);
			\node[font = {\large},] (12) at (1,1.25){\textbf{Center}};
			\node[font = {\large},] (13) at (4,1.25){\textbf{Center}};
			\path[thick, ->] (12) edge node {} (3);
			\path[thick, ->] (13) edge node {} (8);
		\end{tikzpicture} 
		\subcaption{\footnotesize Graph G with two centers $3$ $\&$ $8$ (centers w.r.t. \textit{Defn.~\ref{def:optim:all3}[i](MP\textbf{L}SE))}} 
	\end{subfigure}		
	\begin{subfigure}{0.54\linewidth} \label{fig:RCNetwork_G-centers-2} 
		\centering
		\ctikzset{bipoles/resistor/height=0.3}
		\ctikzset{bipoles/resistor/width=0.5}
		\begin{tikzpicture}[scale=0.95,transform shape, font = {\Large}, every node/.style={scale=0.5}]
			\draw (-1.5,1) to[R=$r_c$ , *-*] (-0.5, 0.5) to[R=$r_c$ , *-*] (1,0) to[R=$r_c$ , *-*] (2, 0) to[R=$r_c$ , *-*] (3.5, 0) to[R=$r_c$ , *-*] (4.5, 0) to[R=$r_c$ , *-*] (6, 0.5) to[R=$r_c$ , *-*] (7, 1); 
			\draw (0,-1) to[R=$r_c$ , *-*] (1,0) ; 
			\draw (2,0) to[R=$r_c$ , *-*] (2.7,-1) ; 
			\draw (4.5,0) to[R=$r_c$ , *-*] (5.5,-1) ;
			\draw (5.5,-1) to[R=$r_c$ , *-*] (6,0.5) ;
			
			\draw[rounded corners, thick] (1,-1) -- (1.5,-1) to[R=$r_L$] (1.5,-2.9) -- (1,-2.9);
			\draw [rounded corners, thick](4.5,-1) -- (4.0,-1) to[R=$r_L$] (4.0,-2.9)--(4.5, -2.9);
			\draw (-1.5,1) to[C=$c$ ] (-1.5,-3);
			\draw (-0.5,0.5) to[C=$c$ ] (-0.5,-3);
			\draw (0,-1) to[C=$c$ ] (0,-3);
			\draw (1,0) to[C=$c$ ] (1,-3);
			\draw (2,0) to[C=$c$ ] (2,-3);
			\draw (2.7,-1) to[C=$c$ ] (2.7,-3);
			\draw (3.5,0) to[C=$c$ ] (3.5,-3);
			\draw (4.5,0) to[C=$c$ ] (4.5,-3);
			\draw (5.5,-1) to[C=$c$ ] (5.5,-3);
			\draw (6,0.5) to[C=$c$ ] (6,-3);
			\draw (7,1) to[C=$c$ ] (7,-3);
			
			
			\draw (-1.5,-3) node[ground]{};
			\draw (-0.5,-3) node[ground]{}; 
			\draw (0,-3) node[ground]{}; 
			\draw (1,-3) node[ground]{}; 
			\draw (2,-3) node[ground]{};
			\draw (2.7,-3) node[ground]{}; 
			\draw (3.5,-3) node[ground]{}; 
			\draw (4.5,-3) node[ground]{}; 
			\draw (5.5,-3) node[ground]{}; 
			\draw (6,-3) node[ground]{}; 
			\draw (7,-3) node[ground]{};

			\node[] at (-1.5,1.3) {$V_1$};
			\node[] at (-0.5,0.8) {$V_2$};
			\node[] at (0.9,0.3) {$V_3$};
			\node[] at (0.3,-1) {$V_4$};
			\node[] at (2,0.3) {$V_5$};
			\node[] at (2.9,-1) {$V_6$};
			\node[] at (3.5,0.3) {$V_7$};
			\node[] at (4.5,0.3) {$V_8$};
			\node[] at (5.3,-1.3) {$V_9$};
			\node[] at (6,0.8) {$V_{10}$};
			\node[] at (7,1.3) {$V_{11}$};
			
			\ctikzset{resistor = american}
		\end{tikzpicture} 
		\subcaption{\footnotesize An analogous RC network of G and leakage resistances across capacitors at nodes $3,8$ and the ground.} 
	\end{subfigure}
	
	\caption{Choosing best $k$-ports in Graph G}\label{fig:ChoosingInGraph}
\end{figure}

\noindent This paper deals with both:
\begin{itemize}
	\item Laplacian matrices for unweighted, undirected graphs, and
	\item Symmetric stochastic matrices: a doubly stochastic version of the Laplacian matrix (see~\cite{Merris97}).
\end{itemize}  
 
\noindent In this context, we define $Z_n:= I -\tau L_n$, for a sufficiently
small $\tau > 0$, as in equation~\eqref{eq:tau:inequality}. It can be interpreted as 
a discretized version (a doubly stochastic version of the Laplacian matrix) of the continuous-time 
system $\dt{x}= -L x$, with $L$ as the Laplacian of the unweighted, undirected graph $G$.
For this paper we assume \vspace{-2mm}
\begin{equation} \label{eq:tau:inequality}
	Z_n:= I -\tau L_n \quad \mbox{where}\quad
	0<\tau < \frac{1}{\Delta (G)}, \quad \Delta(G):=\mbox{max. degree of the graph~} G.
\end{equation}

This assumption on $\tau$ ensures that $I-\tau L$ is a stochastic matrix: the exact value of $\tau$ is not relevant any more for the rest of this paper's results.
We do use the property that if $L$ is symmetric, tridiagonal and unreduced\footnote{A tridiagonal matrix is called \underline{unreduced} if all the super-diagonal and sub-diagonal entries are non-zeros.} then so is $Z_n$.

\subsection{Notation} \label{subsec:notation}
The notation we follow is standard: the sets of real and complex numbers
are denoted respectively by $\R$ and $\C$. $|S|$ is cardinality of set $S$. 
Given an undirected, unweighted simple\footnote{A graph $G$ is called simple if there are no self-loops and $G$ has at most one edge between any pair of nodes.} graph $G$, the number of vertices/nodes are usually $n$, out of which $k$ best nodes are to be selected: usually $k < n/2$. We index the nodes from $1$ to $n$.
The eigenvalues of the Laplacian matrix of the network graph $L(G)$ (or any symmetric matrix in $\Rnn$) are denoted
  as $\lambda_{1}\leqslant\lambda_{2}\leqslant\cdots\leqslant\lambda_{n}$, i.e.
  $\lambda_{1}=\lmin$ and  $\lambda_{n}=\lambda_{max}$.  
$Z$ is a doubly stochastic version of Laplacian matrix $L$. $\bar{n}$ is set of $n$ elements i.e. $\bar{n}:=\{1,2,\dots,n\}$. The minimum eigenvalue of the Laplacian matrix $\lambda_{\min}$ is central in this paper: its dependence is investigated w.r.t. the number of nodes, the perturbation $\epsilon$, port-index $p$ (or port-indices $p_1$ and $p_2$): depending on the context, we write explicitly only some of its arguments and do not write explicitly $\lambda_{\min}(n, \epsilon, p_1, p_2)$ each time. 
 
\subsection{Organization of this paper}
The paper is organized as follows: The subsequent section (Section~\ref{sec:definition}) defines the metrics that are pursued throughout the paper. This is succeeded by the main results (Section~\ref{sec:main:results}), which contains the principal findings of the study pertaining to the path graph. Subsequently, the auxiliary results (Section~\ref{sec:auxiliary:results}) is presented, which furnishes additional results that support or augment the main outcomes. This section also encompasses findings or results that hold true for general graphs. Following this, some heuristics (Section~\ref{sec:heuristics}) are presented for identifying central nodes in general graphs and a comparison with our metrics. The final section (Section~\ref{sec:conclusion}), the conclusion, encapsulates the findings of the paper and deliberates on their implications.

\section{Definitions of $k$-centrality and possible inter-relations}\label{sec:definition}
In the definition below, we propose/compare three metrics for best $k$-ports selection, one of which has been quite well-studied
in the literature; see \cite{BorkarNair, BorkarKarnikNairNalli} and also \cite{WangLiZhang21}: Defn~\ref{def:optim:all3}[B] below.

\begin{definition}\label{def:optim:all3}
Consider an undirected, unweighted simple graph $G_1$ of $n$ nodes and the corresponding Laplacian matrix $L_n\in \Rnn$. Let $Z_n\in \Rnn$ be the corresponding doubly stochastic matrix defined in
equation~\eqref{eq:tau:inequality}
For a subset $S \subset \{1,2, \dots, n\},$ with $|S|=k<n$, and for a sufficiently small
$\epsilon>0$,
construct\footnote{For sufficiently
    small positive $\epsilon$, the subset $S^*$ depends only on the Laplacian matrix $L$ and the graph. This
    too is elaborated later below.} 
matrices
$ ^S\tilde{L}_n$ and $^S\tilde{Z}_n$ as follows:
\begin{equation}  \label{eq:S:perturb:defns}
  ^S\tilde{L}_n:=L_n-\epsilon\sum_{j\in S}e_je_j^T
  \quad \mbox{ and } \quad
  ^S\tilde{Z}_n:=Z_n+\epsilon\sum_{j\in S}e_je_j^T.
\end{equation}
Also define the principal submatrix $^S\hat{Z} \in \R^{(n-k)\times (n-k)}$ obtained by 
   \underline{removing}\footnote{This is similar to the notion of grounded-Laplacian
   pursued in \cite{WangLiZhang21}.} the $k$ rows and $k$ columns
from $Z_n$ corresponding to the subset $S$.

\begin{itemize}
\item[{[i]}] Maximized Perturbed \textbf{Laplacian} Smallest Eigenvalue (MP\textbf{L}SE): The best $k$ nodes 
are defined as the subset $S^*_{MP\textbf{L}SE}$ 
  which maximizes $\lambda_{\min} (^S\Lmod_{n})$, i.e.
\[
  S^*_{MP\textbf{L}SE} = \arg~\underset{\underset{\mbox{with $|S|=k$}}{\mbox{all subsets $S$ }}}{\max}~\lambda_{\min} (^S\Lmod_{n}).
\]
\item[{[ii]}] \cite{BorkarNair, BorkarKarnikNairNalli} Minimized \textbf{Sub}-stochastic Largest Eigenvalue (M\textbf{Sub}-LE):
The best $k$ nodes 
is defined as the subset $S^*_{M\textbf{Sub}\mbox{-}LE}$ that minimizes
  $\lambda_{\max} ({}^S\hat{Z})$ of the principal
  submatrix ${}^S\hat{Z} \in \R^{(n-k)\times(n-k)}$ corresponding to $S$,
 with $k$ rows/columns indexed by $S$ \underline{removed} from $Z$:
\[
S^*_{M\textbf{Sub}\mbox{-}LE}: = \arg~\underset{\underset{\mbox{with $|S|=k$}}{\mbox{all subsets $S$ }}}{\min}~\lambda_{\max} (^S\hat{Z}).
\]
\item[{[iii]}] Minimized \textbf{Super}-stochastic Largest Eigenvalue (M\textbf{Sup}-LE): The best $k$ nodes 
is defined as the subset $S^*_{M\textbf{Sup}\mbox{-}LE}$: that minimizes the largest eigenvalue of the super-stochastic matrix 
 $^S\tilde{Z}_n \in \Rnn$ obtained by perturbation defined in
equation~(\ref{eq:S:perturb:defns}):
\[
S^*_{M\textbf{Sup}\mbox{-}LE} := \arg~\underset{\underset{\mbox{with $|S|=k$}}{\mbox{all subsets $S$ }}}{\min}~\lambda_{\max} (^S\tilde{Z}_n).
\]
\end{itemize}
\end{definition}

\noindent The following remark explains about possible non-uniqueness in the optimizing sets above.\vspace{-2mm}
\begin{remark}
Of course, for each of the optimization problems within the definition above, the argument of the maximization/minimization, need not be unique: for the set of `best' $k$ nodes to be well-defined, we assume uniqueness. In the specific results in this paper, we
assume appropriate conditions that indeed ensure the uniqueness of the argument. 
\end{remark}

\noindent The remark below provides the interpretation of Defn.~\ref{def:optim:all3}[i](MP\textbf{L}SE) as an RC circuit quickly discharging through leakage resistances provided at the $k$-nodes.\vspace{-2mm} 
\begin{remark}\label{rem:interpretation:MPLSE}
The notion of best $k$-nodes in the sense of Defn.~\ref{def:optim:all3}[i](MP\textbf{L}SE) is 
best understood, in our opinion, using an RC circuit\footnote{The RC circuit state space representation is  $\dt{v}=-Lv$, with $v$: the capacitor voltages, $v(t)\in \mathbb{R}^n$ (assuming resistance and capacitance are appropriately normalized).} and interpreting the diagonal entries' perturbation
as providing a large resistance across capacitors indexed by $S$. Defn.~\ref{def:optim:all3}[i](MP\textbf{L}SE) formulates that those nodes are defined as central where the capacitor leakage causes the perturbed RC circuit to have the fastest discharge. One can easily surmise that leakage provided at a peripheral node would cause a slow and prolonged discharge of the capacitor total charge, while leakage provided at relatively central nodes would cause a faster discharge of the RC circuit.  
\end{remark}

\noindent The remark below provides another interpretation of Defn.~\ref{def:optim:all3}[i](MP\textbf{L}SE) and its comparison with the ambulance-location and the maximal covering location problem.\vspace{-2mm}
\begin{remark} \label{rem:independence:2ports:unintuitive}
Another interpretation of Defn.~\ref{def:optim:all3}[i](MP\textbf{L}SE) that is related, but not equivalent to, is the so-called
`ambulance-location' problem that is well-studied in the facility location
and logistics literature: see \cite{Munch23:etal} for a recent paper and its references,
and also \cite{Church:74} for a classic paper initiating such work as
a maximal covering location problem. What is distinct and fairly unintuitive 
is that when choosing \underline{two} central locations for ambulance locations, then the choice of one location surprisingly does not play a role in the (local/global) optimality of the other location.
Remark~\ref{rem:RC:initial:voltage:reconciliation} later provides a reason for this fairly unintuitive feature of Defn.~\ref{def:optim:all3}[i](MP\textbf{L}SE).
\end{remark}
\noindent Definition \ref{def:optim:all3}[ii](M\textbf{Sub}-LE) is familiar and well-studied in the literature \cite{BorkarNair}, the principal sub-stochastic matrix largest eigenvalue minimization based selection. The following remark provides an interpretation of Defn.~\ref{def:optim:all3}[ii](M\textbf{Sub}-LE) in the sense of `manufacturing consent' i.e. how fast influence can be diffused.\vspace{-2mm}
\begin{remark}\label{rem:interpretation:Submat}
The notion of best $k$-nodes for a stochastic matrix in the sense of `manufacturing consent' (i.e.  Defn.~\ref{def:optim:all3}[ii](M\textbf{Sub}-LE))
 was proposed in \cite{BorkarNair, BorkarKarnikNairNalli} and is also closely related to that of the `grounded Laplacian'
(see \cite{Miekkala93}). In this definition, given a stochastic matrix $Z_n$ and given a subset $S$ of cardinality $k$, the corresponding $k$ rows and $k$ columns
are removed thus making the resulting matrix a sub-stochastic (or positive definite, in the case
of the grounded Laplacian). The minimization of the largest eigenvalue of the principal submatrix $^S\!\hat{Z} \in \R^{(n-k)\times (n-k)}$ is interpreted as, loosely speaking, how quickly the influence through the $k$ nodes over-rides onto other rest $n-k$ nodes compared to the influence due to the $n-k$ nodes on themselves. Further, the minimization of the largest eigenvalue implies faster convergence to steady state or equilibrium in stochastic process analogy. 
\end{remark}

\noindent The following remark discusses an interpretation of Defn.~\ref{def:optim:all3}[iii](M\textbf{Sup}-LE) as stubbornness/influence diffusion measure.\vspace{-2mm}
\begin{remark}\label{rem:interpretation:Supmat}
The Defn.~\ref{def:optim:all3}[iii](M\textbf{Sup}-LE) can be understood as follows. The addition of a small and positive value
of $\epsilon$ at diagonal elements of $Z$ corresponding to $S$ may be interpreted as introducing a small amount of `stubborn-ness' 
at those nodes. The addition of an amount $\epsilon>0$ to a diagonal entry can cause an increase of the largest eigenvalue
beyond 1 by varying amounts (but an amount between $0$ and $\epsilon$): the nodes that are quite central are able to `diffuse' this stubborn-ness (or 
influence) to and through their neighbouring nodes much better than nodes which are isolated or near to isolated. Minimizing
the largest eigenvalue of the super-stochastic matrix (as the proposed notion in Defn.~\ref{def:optim:all3}[iii](M\textbf{Sup}-LE)) amounts to finding nodes where the introduced stubbornness diffuses the most: thus more influential nodes can diffuse this stubborn-ness better.
\end{remark}

\noindent The following proposition states an expression for eigenvalues and the corresponding eigenvector of the path graph $P_n$ of order $n$. \vspace{-2mm}
\begin{proposition}\label{prop:pathEigvalEigvec}
	\cite[Lemma 6.6.1]{SpielmanBook} Let $L_n$ be the Laplacian matrix of the path graph $P_n$ of order $n$. \\
	Then the (ordered) eigenvalues $\lambda_{j}\in \mathbb{R}$ and corresponding eigenvectors, $v_j\in \mathbb{R}^{n}$ of the Laplacian matrix $L_n$ ($L_n v_j=\lambda_{j}v_j$ for $j=\{ 1,2, \dots,n\}$) satisfy the following:
	\begin{itemize}
		\item [i)] The eigenvalues $\lambda_{j}$ are distinct and  $0\leqslant\lambda_{j}<4$
		\item [ii)] 	The eigenvalue, $\lambda_j=2\biggl\{1-\cos\bigl(\pi (j-1)/n\bigr)\biggr\} $
		\item [iii)] The $p^{th}$ component of the corresponding eigenvector $v_j\in \mathbb{R}^{n}$,
		$v_j(p)=\cos\bigl(\pi (j-1)(p-0.5)/n\bigr) $ 
	\end{itemize}

\end{proposition}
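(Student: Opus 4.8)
The plan is to verify the claimed formulas directly by substitution into the eigenvalue equation $L_n v_j=\lambda_j v_j$, exploiting the tridiagonal structure of the path-graph Laplacian. First I would write $L_n$ explicitly: it is the symmetric tridiagonal matrix with all sub- and super-diagonal entries equal to $-1$, interior diagonal entries equal to $2$, and the two corner diagonal entries (for the degree-one endpoints) equal to $1$. Writing $v(p)$ for the $p$-th component of a candidate eigenvector, the eigenvalue equation splits into an interior three-term recurrence
\[
-v(p-1)+2v(p)-v(p+1)=\lambda v(p), \qquad 2\leqslant p\leqslant n-1,
\]
together with the two boundary equations $v(1)-v(2)=\lambda v(1)$ and $-v(n-1)+v(n)=\lambda v(n)$.

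Next I would substitute the ansatz $v(p)=\cos\bigl(\theta(p-0.5)\bigr)$, motivated by the fact that the interior recurrence has constant coefficients with characteristic roots on the unit circle. Using the product-to-sum identity $\cos\bigl(\theta(p-1.5)\bigr)+\cos\bigl(\theta(p+0.5)\bigr)=2\cos\theta\,\cos\bigl(\theta(p-0.5)\bigr)$, the interior recurrence collapses to the single scalar relation $\lambda=2(1-\cos\theta)$, independent of $p$. This already pins down the eigenvalue formula in (ii) once $\theta$ is determined, and it is precisely here that the half-integer shift $p-0.5$ earns its keep.

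The key step is the boundary analysis. I would show that the left boundary equation at $p=1$ holds automatically for every $\theta$: both sides reduce to $4\sin^2(\theta/2)\cos(\theta/2)$ after applying sum-to-product together with $1-\cos\theta=2\sin^2(\theta/2)$. Thus the $-0.5$ shift encodes a Neumann-type condition at the left endpoint that imposes no constraint. The right boundary equation is equivalent to requiring a phantom value $v(n+1)=v(n)$, i.e. $\cos\bigl(\theta(n+0.5)\bigr)=\cos\bigl(\theta(n-0.5)\bigr)$, which by sum-to-product becomes $\sin(n\theta)\sin(\theta/2)=0$. The admissible angles are therefore $\theta=\pi(j-1)/n$ for $j=1,\dots,n$, yielding $\lambda_j=2\{1-\cos(\pi(j-1)/n)\}$ and $v_j(p)=\cos\bigl(\pi(j-1)(p-0.5)/n\bigr)$, exactly as in (ii) and (iii).

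Finally, for (i) I would observe that as $j$ runs from $1$ to $n$ the angle $\theta=\pi(j-1)/n$ sweeps the half-open interval $[0,\pi)$, on which $\cos$ is strictly decreasing; hence the $\lambda_j$ are strictly increasing and in particular distinct, with $\lambda_1=0$ (eigenvector the all-ones vector, consistent with a connected graph) and $\lambda_j<2(1-\cos\pi)=4$ since $\theta$ never reaches $\pi$. Because we have exhibited $n$ distinct eigenvalues of the $n\times n$ symmetric matrix $L_n$, they exhaust the spectrum, and the indexing matches the convention $\lambda_1\leqslant\cdots\leqslant\lambda_n$. I expect the only delicate point to be the boundary bookkeeping, namely checking that the left endpoint condition holds identically while the right endpoint condition produces precisely the quantization $\sin(n\theta)=0$; the interior computation and the monotonicity argument are routine.
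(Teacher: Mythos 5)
Your proof is correct. Note, however, that the paper does not prove this proposition at all: it is imported verbatim as \cite[Lemma 6.6.1]{SpielmanBook}, so there is no in-paper argument to compare against. Your route — direct substitution of the ansatz $v(p)=\cos\bigl(\theta(p-0.5)\bigr)$ into the tridiagonal recurrence, with the half-integer shift making the left (Neumann-type) boundary condition hold identically and the right one quantizing $\theta$ via $\sin(n\theta)\sin(\theta/2)=0$ — is a clean, self-contained verification. It also differs from the cited source, where Spielman obtains the path-graph spectrum by folding the eigenvectors of the ring graph $R_{2n}$ under the quotient identification; your approach trades that structural derivation for an elementary computation that has the advantage of being checkable line by line and of making explicit why the endpoints force exactly the angles $\theta=\pi(j-1)/n$. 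All the individual steps check out: the product-to-sum identity gives $\lambda=2(1-\cos\theta)$ on the interior, both sides of the left boundary equation do reduce to $4\sin^2(\theta/2)\cos(\theta/2)$, the right boundary is equivalent to the phantom condition $v(n+1)=v(n)$, and exhibiting $n$ distinct eigenvalues of the symmetric $n\times n$ matrix settles completeness of the list and the bound $0\leqslant\lambda_j<4$. The only point worth making explicit (though it follows from your counting argument) is that the candidate vectors are nonzero, e.g.\ because $v_j(1)=\cos\bigl(\pi(j-1)/(2n)\bigr)>0$ for all admissible $j$.
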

\noindent The eigenvalues of the path graph are distinct as evident from the above proposition: this provides the uniqueness of the eigenvector (of course, up to scaling by a non-zero scalar).

\section{Main results: $k$ centrality and inter-relations}\label{sec:main:results}
In this section, we formulate the main results of this paper. The proofs of the results in this section need further results formulated and proved
in the following section. Our main results pertain to path graphs for which we prove that the optimality of the \underline{same set of $k$-central} nodes w.r.t. the different definitions listed in Defn.~\ref{def:optim:all3}[MP\textbf{L}SE, M\textbf{Sub}-LE, M\textbf{Sup}-LE]. Closed-form expressions are also formulated and
proved for the case of path graphs on $n$-nodes. For graphs that are more general than path graphs, the metrics and heuristics
do not agree 100\% and later in Section~\ref{sec:heuristics} we include computational experiments on randomly generated graphs
of various orders.

\subsection{``Central nodes" for path graph}

Our first main result obtains a single central node for the path graph w.r.t. all three Defn.~\ref{def:optim:all3}(MP\textbf{L}SE, M\textbf{Sub}-LE, M\textbf{Sup}-LE).\vspace{-2mm}
\begin{theorem}\label{thm:1port_path_Eigenshift}
	Let $n$ be odd, and consider the path graph $P_n$ on $n$ nodes and let $L_n$ be the Laplacian matrix. Define $Z_n := I_n -\tau L_n$ for 
	sufficiently small and positive $\tau$ (as in equation~\eqref{eq:tau:inequality}).
	Let $v_F$ be the Fiedler eigenvector, i.e. the eigenvector corresponding to $\lambda_2$.
	Consider the central node w.r.t. Defn.~\ref{def:optim:all3}[MP\textbf{L}SE, M\textbf{Sub}-LE, M\textbf{Sup}-LE].
	Define\footnote{\label{fn:physicalcenter}Thus, with $n$ odd, $p^*$ is the traditional physical center of the path graph $P_n$.} $p^*:=\cfrac{n+1}{2}$, then the following hold: 
	\begin{enumerate}
		\item The smallest eigenvalue $\lambda_{\min}$ of perturbed Laplacian matrix (up to $2^{nd}$ order in $\epsilon$) is maximized at $p=p^*$:\vspace{-4mm}
		\begin{equation}\label{eq:eigenshift:2perturbed}
			\begin{split}
				\underset{p}{\max}~\lambda_{min}(\Lmod_{n,\epsilon}, p)&=
				\cfrac{\epsilon}{n}-\cfrac{\epsilon^2}{4n}\sum_{j=2}^{n}\cfrac{\cos^2\left(\pi (j-1)/2\right)}{\sin^2\left(0.5\pi (j-1)/n \right)\left\{\sum_{p=1}^{n}\cos^2\left(\pi  (j-1)(p-0.5)/n\right)\right\}}\\
				&=\frac{\epsilon}{n} - \frac{\epsilon^2}{12n^2}(n^2-1)
			\end{split} 
		\end{equation}
		\item The node indexed at $p^*$ is center w.r.t. all 3 Definition \ref{def:optim:all3}[MP\textbf{L}SE, M\textbf{Sub}-LE, M\textbf{Sup}-LE](up to $2^{nd}$ order approx. in $\epsilon$).\\   
		\item \cite{BapatSukanta} Further, if $v_{2}\in \R^n$ is the eigenvector corresponding to the eigenvalue $\lambda_{2}$ (the Fiedler value) and $v_2(j)$ is the $j^{th}$ component of $v_2$, 
		then the $v_{2}(j)=0$ $\iff~~j=p^*$. 
	\end{enumerate}
\end{theorem}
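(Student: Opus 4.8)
The plan is to anchor all three parts on the explicit path-graph eigendata of Proposition~\ref{prop:pathEigvalEigvec} and to reduce the three centrality notions to a single scalar attached to each port. I would dispatch Part~3 first as a warm-up, since it is a pure computation: the Fiedler vector has entries $v_2(j)=\cos\bigl(\pi(j-0.5)/n\bigr)$, which vanishes exactly when $(j-0.5)/n=\tfrac12+\ell$ for some integer $\ell$, i.e. $j=\tfrac{n+1}{2}+\ell n$. Within $\{1,\dots,n\}$ the only solution is $j=p^*$ (using that $n$ is odd, so $\tfrac{n+1}{2}$ is an integer), giving $v_2(j)=0\iff j=p^*$.

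For the first equality and closed form in Part~1, I would apply a second-order Rayleigh--Schr\"odinger expansion of $\lambda_{\min}({}^S\Lmod_n)$ about the simple eigenvalue $\lambda_1=0$, whose unit eigenvector is the constant vector $\hat v_1=\mathbf 1/\sqrt n$. The first-order correction is $\epsilon\,\hat v_1(p)^2=\epsilon/n$, independent of the port $p$, which is precisely why a second-order analysis is needed to separate ports. The second-order correction evaluates to $-\tfrac{\epsilon^2}{n}\sum_{j\ge2}\hat v_j(p)^2/\lambda_j=-\tfrac{\epsilon^2}{n}(L^{\dagger})_{pp}$, where $L^{\dagger}$ is the Moore--Penrose pseudoinverse; substituting $\lambda_j=4\sin^2\!\bigl(\pi(j-1)/2n\bigr)$ and $\|v_j\|^2=\sum_p\cos^2\!\bigl(\pi(j-1)(p-0.5)/n\bigr)=n/2$ for $j\ge2$ reproduces the general-port second-order term. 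Specializing to $p=p^*$ makes $\cos^2\!\bigl(\pi(j-1)/2\bigr)$ vanish for all even $j$, leaving only $j$ odd; the surviving sum is $\tfrac2n\sum_{m=1}^{(n-1)/2}\csc^2(\pi m/n)$, and the classical identity $\sum_{m=1}^{n-1}\csc^2(\pi m/n)=(n^2-1)/3$ together with the symmetry $\sin(\pi m/n)=\sin(\pi(n-m)/n)$ gives the closed form $\tfrac{\epsilon}{n}-\tfrac{\epsilon^2}{12n^2}(n^2-1)$ in~\eqref{eq:eigenshift:2perturbed}.

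The crux of Part~1 is to prove that $p^*$ is the global port-maximizer, i.e. that $(L^{\dagger})_{pp}$ is minimized at the centre; this is the single genuinely optimization-flavoured step and where I expect to spend the most care. My preferred route is the effective-resistance identity $(L^{\dagger})_{pp}=\tfrac1n\sum_q R_{pq}-\tfrac1{n^2}\sum_{q<r}R_{qr}$, which follows from $R_{pq}=(L^{\dagger})_{pp}+(L^{\dagger})_{qq}-2(L^{\dagger})_{pq}$ and the zero row-sums of $L^{\dagger}$. On the tree $P_n$ the resistance distance is the graph distance, $R_{pq}=|p-q|$, so the double sum is port-independent and minimizing $(L^{\dagger})_{pp}$ reduces to minimizing $\sum_q|p-q|$, an elementary median problem whose unique minimizer for $n$ odd is $p^*$. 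As a fallback I would instead compare the trigonometric sum at $p$ and $p\pm1$ directly via a telescoping/monotonicity argument in $p$.

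For Part~2 I would reduce the two stochastic notions to the same quantity. Writing $Z_n=I-\tau L_n$, the top eigenvalue $\lambda_{\max}(Z_n)=1-\tau\lambda_1=1$ is simple with the same constant eigenvector, so the identical expansion applied to ${}^S\tilde Z_n=I-\tau L_n+\epsilon e_pe_p^T$ yields $\lambda_{\max}({}^S\tilde Z_n)=1+\tfrac{\epsilon}{n}+\tfrac{\epsilon^2}{n\tau}(L^{\dagger})_{pp}+O(\epsilon^3)$; thus, to second order, minimizing it again amounts to minimizing $(L^{\dagger})_{pp}$, hence $p=p^*$. For the sub-stochastic notion, ${}^S\hat Z=I-\tau\,{}^S\hat L$ with ${}^S\hat L$ the grounded (Dirichlet) Laplacian obtained by deleting row/column $p$, so $\lambda_{\max}({}^S\hat Z)=1-\tau\,\lambda_{\min}({}^S\hat L)$ and minimizing it is maximizing $\lambda_{\min}({}^S\hat L)$. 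Deleting node $p$ decouples $P_n$ into two grounded subpaths of lengths $p-1$ and $n-p$, each a Dirichlet--Neumann path whose smallest eigenvalue is $2\bigl(1-\cos(\pi/(2m+1))\bigr)$, a strictly decreasing function of the length $m$; hence $\lambda_{\min}({}^S\hat L)$ is the value attached to the \emph{longer} piece and is maximized exactly when the two pieces are balanced, i.e. $p=p^*$ for $n$ odd. Assembling the three minimizers gives the common centre $p^*$, completing the theorem.
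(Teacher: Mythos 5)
Your proposal is correct, but it reaches the key conclusions by a genuinely different route from the paper. For Part~1 the paper computes the characteristic polynomial of the perturbed tridiagonal matrix explicitly (Propositions~\ref{prop:characteristic:Tri}--\ref{prp:characteristic:path:perturb}, Lemma~\ref{lem:perturb:epsi:num:den}), applies series reversion (Lemma~\ref{lem:perturb:epsi:num:den:series}) to extract $\beta_1,\beta_2$, and reads off the global maximizer from the resulting explicit parabola $\frac{\epsilon}{n}-\frac{\epsilon^2}{12n^2}\bigl[(n^2-1)+12(j-p^*)^2\bigr]$ (Lemma~\ref{lem:perturb:shiftmax:one}); your Rayleigh--Schr\"odinger expansion coincides with the paper's Lemma~\ref{lem:kports_path_EigenShift}, but your identification of the second-order coefficient as $-\tfrac{\epsilon^2}{n}(L^{\dagger})_{pp}$, followed by the effective-resistance identity and the reduction to the $1$-median problem $\min_p\sum_q|p-q|$, replaces all of the characteristic-polynomial book-keeping with a short conceptual argument that also explains \emph{why} the physical center wins and would extend to general trees; the price is that you get the closed form at $p^*$ from the cosecant identity rather than as a by-product of the expansion. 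For Part~2 your M\textbf{Sub}-LE argument (deletion decouples $P_n$ into two grounded subpaths whose smallest Dirichlet eigenvalue $2(1-\cos(\pi/(2m+1)))$ is decreasing in length, so balance is optimal) is essentially the paper's Lemma~\ref{lem:perturbedLvsN} specialized from Theorem~\ref{thm:kports_path} to $k=1$, while for M\textbf{Sup}-LE the paper's Theorem~\ref{thm:superstochastic_eigenshift} gives an \emph{exact} equivalence with MP\textbf{L}SE via the affine spectral map $Z\mapsto I-\tau L$, which is slightly stronger and cleaner than your second-order expansion (though the latter suffices for the statement as given, which is only up to second order). Part~3 is the same direct computation as the paper's. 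All the identities you invoke ($\|v_j\|^2=n/2$ for $j\ge2$, $\sum_{m=1}^{n-1}\csc^2(\pi m/n)=(n^2-1)/3$, $R_{pq}=|p-q|$ on a tree) check out, so I see no gap.
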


\noindent See the appendix for the proof. An interesting fact that is the dependence on $p\in \R$ instead of $p \in \Z$. While $p^*=\frac{n+1}{2}$ is expected to be the maximizer for $\lambda_{\min}$ when $p \in \Z$, it is interesting that for $p \in \R$, the point $p^*$ is a \underline{local minima}: see Lemma~\ref{lem:1port_path_EigenShift}. Figure \ref{fig:1portfunction}  shows a plot of $\lambda_{\min}$ versus $p$ revealing the local minima, though $p$ is traditionally interpreted as port index (and an integer only). This is what makes the theorem significant; its proof uses Lemma~\ref{lem:1port_path_EigenShift} that formulates the dependence of $\lambda_{\min}$ on the port index more explicitly. \\
\begin{figure}[h!]
	\centering
	\includegraphics[width=0.6\textwidth, height=0.4\textwidth]{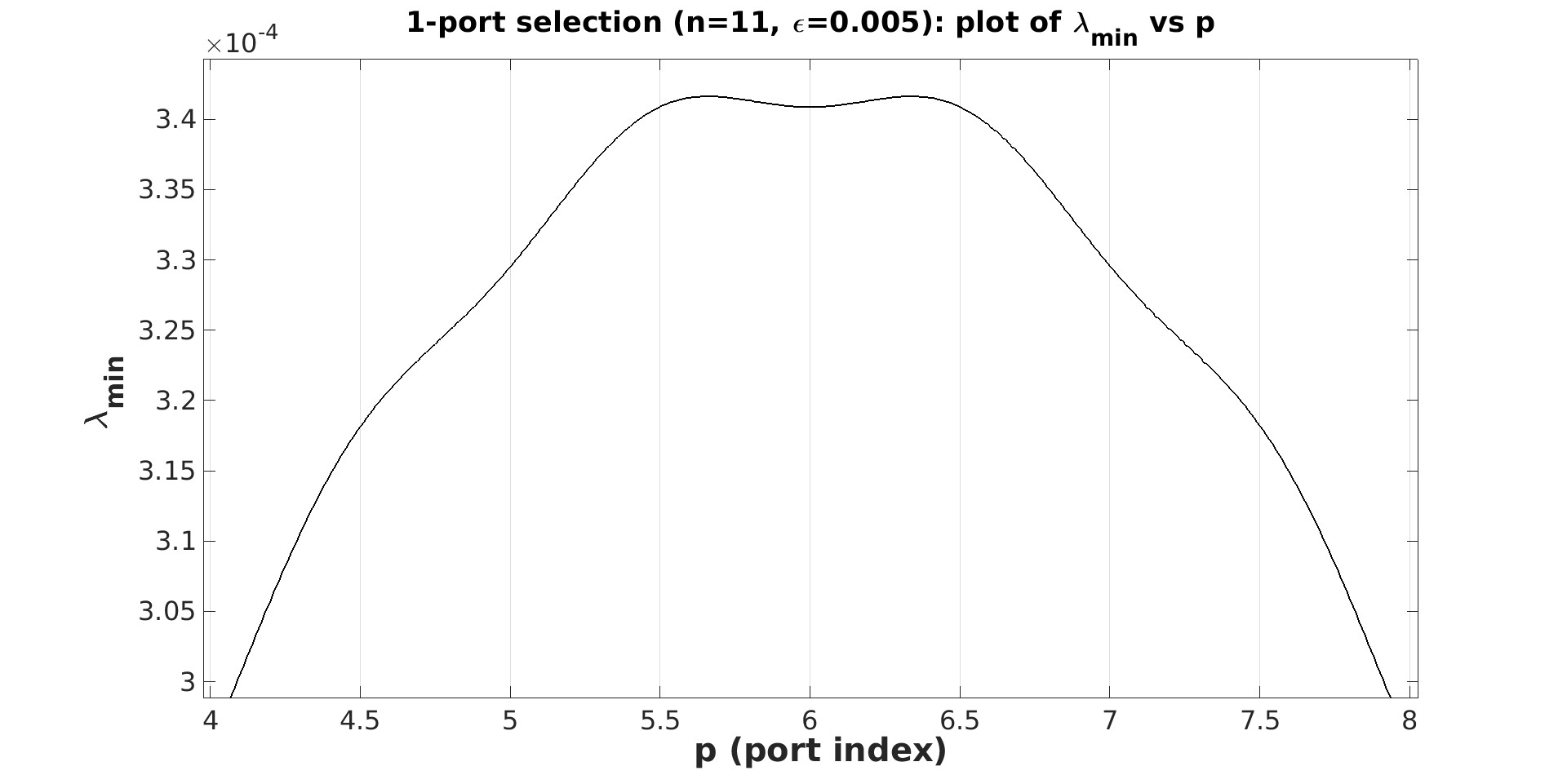}
	\caption{$\lambda_{\min}$ function vs $p$ plot for $P_{11}$ showing \underline{local minima} at the physical center.}
	\label{fig:1portfunction}
\end{figure}

The following result obtains \underline{two} central nodes for the path graph w.r.t. Defn.~\ref{def:optim:all3}[MP\textbf{L}SE, M\textbf{Sub}-LE, M\textbf{Sup}-LE]: here too the $3$ definitions coincide.\vspace{-2mm}
\begin{theorem}\label{thm:2ports_path_Eigenshift}
	Let $n$ be even and $\cfrac{n}{2}$ be odd, and consider the path graph $P_n$. Define $p_1^*:=\cfrac{n+2}{4}$ and $p_2^*:=\cfrac{3n+2}{4}$: the `physical centers' (see Footnote~\ref{fn:physicalcenter}) of the two halves of the path graph $P_n$. Then, for $j=\{1,2, \dots, n\}$ the following hold about the best $2$ central nodes w.r.t. Defn.~\ref{def:optim:all3}[MP\textbf{L}SE, M\textbf{Sub}-LE, M\textbf{Sup}-LE]. 
	\begin{enumerate}
		\item The smallest eigenvalue $\lambda_{\min}$ of the perturbed Laplacian matrix is maximized over all pairs ($p_1, p_2$) with $p_1,~p_2 \in \{1,2,\dots, n\}$  (for $2^{nd}$ order approximation in $\epsilon$): at $p_1=p_1^*$ and $p_2=p_2^*$
		\begin{equation}\label{eq:eigenshift:2perturbed}
			\begin{split}
				\underset{p_1,p_2}{\max}~ \lambda_{min}(\Lmodd_{n,\epsilon}, p_1, p_2)
				&=\lambda_{min}(\Lmodd_{n,\epsilon}, p_1, p_2)\Bigg |_{p_1=p_1^*, p_2=p_2^*}=\frac{2\epsilon}{n} -\frac{\epsilon^2}{12n^2}(n^2-4)\\
				&=\cfrac{2\epsilon}{n}-\cfrac{\epsilon^2}{4n}\sum_{j=2}^{n}\cfrac{\Bigl\{\cos\left(\pi (j-1)/4 \right)+\cos\left(3\pi (j-1)/4 \right)\Bigr\}^2}{\sin^2\left(0.5\pi (j-1)/n \right)\left\{\sum_{p=1}^{n}\cos^2\left(\pi  (j-1)(p-0.5)/n\right)\right\}}				
			\end{split}
		\end{equation}
		\item The nodes indexed at $p_1^*$ ~and~ $p_2^*$ are center w.r.t. all 3 Definitions \ref{def:optim:all3}[MP\textbf{L}SE, M\textbf{Sub}-LE, M\textbf{Sup}-LE].  \\ 
		\item Further, if $v_{3}$ is the eigenvector corresponding to the eigenvalue $\lambda_{3}$, and let $v_3(j)$ denotes the $j^{th}$ component of $v_3$. Then, for $j=\{1,2,\dots,n\}$, $v_{3}(j)=0$ $\iff~~j\in \{p_1^*,~p_2^*\}$. 
		
	\end{enumerate}
\end{theorem}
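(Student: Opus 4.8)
The statement has three parts; the plan is to handle the eigenvalue optimization (items~1 and~2) by a single second-order perturbation computation and to read off the eigenvector claim (item~3) directly from Proposition~\ref{prop:pathEigvalEigvec}. Since $P_n$ is connected, $L_n$ has a \emph{simple} zero eigenvalue with eigenvector $\mathbf 1=(1,\dots,1)^T$, so I would run a non-degenerate Rayleigh--Schr\"odinger expansion of $\lmin$ for the rank-two perturbation $M:=e_{p_1}e_{p_1}^T+e_{p_2}e_{p_2}^T$. The first-order correction is $\epsilon\langle\mathbf 1,M\mathbf 1\rangle/\langle\mathbf 1,\mathbf 1\rangle=2\epsilon/n$, which is \emph{independent} of $(p_1,p_2)$; hence the ports are decided purely at second order. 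Setting $w:=e_{p_1}+e_{p_2}$ and solving $L_n\psi=-(M-\tfrac2n I)\mathbf 1$ with $\psi\perp\mathbf 1$ for the first-order eigenvector correction, the second-order term collapses to
\[
\lmin^{(2)}=-\tfrac1n\,w^{T}L_n^{+}w,
\]
where $L_n^{+}$ is the Moore--Penrose pseudoinverse. Expanding this over the eigenbasis of Proposition~\ref{prop:pathEigvalEigvec} reproduces the trigonometric sum on the second line of~\eqref{eq:eigenshift:2perturbed} once $(p_1,p_2)=(p_1^*,p_2^*)$ is inserted, since $p_1^*-0.5=n/4$ and $p_2^*-0.5=3n/4$ turn the numerator into $\{\cos(\pi(j-1)/4)+\cos(3\pi(j-1)/4)\}^2$.

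Rather than summing that series directly, I would evaluate $w^{T}L_n^{+}w$ through effective resistances. On a path $R_{ij}=|i-j|$, and $L_n^{+}$ has the closed form $L_n^{+}(i,i)=\tfrac1n\{i^2-(n+1)i\}+C$ (with $C$ independent of $i$) and $L_n^{+}(i,j)=\tfrac12\{L_n^{+}(i,i)+L_n^{+}(j,j)-|i-j|\}$. Substituting yields, for $p_1<p_2$,
\[
w^{T}L_n^{+}w=2\bigl\{L_n^{+}(p_1,p_1)+L_n^{+}(p_2,p_2)\bigr\}-(p_2-p_1)=h_1(p_1)+h_2(p_2)+\mathrm{const},
\]
a \emph{separable} convex quadratic. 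Minimizing $h_1,h_2$ independently gives $p_1=\tfrac{n+2}{4}$, $p_2=\tfrac{3n+2}{4}$, which are integers exactly when $n\equiv2\pmod4$ (i.e.\ $n/2$ odd); convexity makes these the global integer minimizers, and back-substitution gives $w^{T}L_n^{+}w=\tfrac{n^2-4}{12n}$, hence $\lmin=\tfrac{2\epsilon}{n}-\tfrac{\epsilon^2}{12n^2}(n^2-4)$. This settles item~1 and also explains Remark~\ref{rem:independence:2ports:unintuitive}: $\partial_{p_1}(w^{T}L_n^{+}w)$ does not involve $p_2$, so the optimal location of one port is insensitive to the other.

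For item~2 I would relate the three metrics. Because $^S\tilde Z_n=I-\tau\,{}^S\tilde L_n$ after rescaling $\epsilon$, and an order-reversing affine map preserves argmin/argmax, M\textbf{Sup}-LE and MP\textbf{L}SE select the same ports (the port-deciding second-order term is, moreover, insensitive to the sign of the perturbation). The M\textbf{Sub}-LE case is genuinely different: $\lambda_{\max}({}^S\hat Z)=1-\tau\,\lmin({}^S\hat L)$, so it maximizes the smallest eigenvalue of the grounded (Dirichlet) Laplacian obtained by deleting $p_1,p_2$. On a path this block-decouples into two Neumann--Dirichlet end segments and one Dirichlet--Dirichlet middle segment; $\lmin$ is the smallest segment eigenvalue, and maximizing it amounts to balancing the segments. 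Using the explicit segment spectra (via an auxiliary lemma of Section~\ref{sec:auxiliary:results}) I would show the balance is achieved exactly at $p_1^*,p_2^*$ when $n/2$ is odd.

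Item~3 is immediate from Proposition~\ref{prop:pathEigvalEigvec}: $v_3(j)=\cos\bigl(2\pi(j-0.5)/n\bigr)$ vanishes iff $2(j-0.5)/n$ is a half-integer, i.e.\ $j\in\{\tfrac{n+2}{4},\tfrac{3n+2}{4}\}=\{p_1^*,p_2^*\}$, the only admissible roots in $\{1,\dots,n\}$ when $n\equiv2\pmod4$. I expect the main obstacle to be the M\textbf{Sub}-LE half of item~2 together with the closed-form evaluations: establishing the pseudoinverse/effective-resistance identities (equivalently, summing the trigonometric series) and proving segment-balancing optimality for the grounded Laplacian. A subtlety worth flagging is that the real-variable trigonometric interpolation of $\lmin$ (Figure~\ref{fig:1portfunction}) is misleading---it has a \emph{local minimum} at the physical center---so the clean path to the discrete optimum is through the pseudoinverse/resistance form, not real-variable calculus on the trigonometric sum.
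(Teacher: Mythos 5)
Your proposal is correct, and for parts 2 and 3 it follows essentially the same route as the paper: the M\textbf{Sub}-LE claim via segment balancing of the grounded Laplacian (Theorem~\ref{thm:kports_path} together with Lemma~\ref{lem:perturbedLvsN}), the M\textbf{Sup}-LE claim via the affine relation $Z_n=I_n-\tau L_n$ (Theorem~\ref{thm:superstochastic_eigenshift}), and part 3 directly from Proposition~\ref{prop:pathEigvalEigvec}. Where you genuinely diverge is in part 1. The paper obtains the trigonometric sum from the Rayleigh--Schr\"odinger expansion (Lemmas~\ref{lem:2ports_path_EigenShift} and \ref{lem:kports_path_EigenShift}) and then, \emph{separately}, derives the closed form $\frac{2\epsilon}{n}-\frac{\epsilon^2}{12n^2}\bigl[(n^2-4)+24\{(j_1-p_1^*)^2+(j_2-p_2^*)^2\}\bigr]$ by tracking the $\epsilon^0,\epsilon^1,\epsilon^2$ coefficients of the characteristic polynomial of the perturbed tridiagonal matrix (Propositions~\ref{prp:characteristic:path:explicit}, \ref{prp:characteristic:path:perturb:2} and Lemmas~\ref{lem:perturb:epsi:num:den:two}, \ref{lem:perturb:doubleepsi:num:den:series}, \ref{lem:perturb:shiftmax:two}), equating the two expressions at the optimum. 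You instead recognize the second-order coefficient as $-\tfrac1n\,w^{T}L_n^{+}w$ and evaluate it in one stroke via the effective-resistance form of $L_n^{+}$ on a path; I checked the arithmetic and your separable quadratic is minimized at $p_1^*=\tfrac{n+2}{4}$, $p_2^*=\tfrac{3n+2}{4}$ with value $\tfrac{n^2-4}{12n}$, reproducing the stated formula exactly. Your route buys three things: it avoids the polynomial-coefficient bookkeeping (which the paper itself declines to write out for Lemmas~\ref{lem:perturb:epsi:num:den} and \ref{lem:perturb:epsi:num:den:two}), it makes the independence of the two port choices (Remark~\ref{rem:RC:initial:voltage:reconciliation}) an immediate consequence of the separability of $w^{T}L_n^{+}w$, and it gives global optimality over all integer pairs $p_1<p_2$ rather than only over the restricted range $j_1<n/2<j_2$ assumed in Lemma~\ref{lem:perturb:shiftmax:two}. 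The paper's route, in exchange, produces the explicit quadratic dependence on $(j_1,j_2)$ as a reusable formula (exploited, e.g., in Theorem~\ref{thm:convexity}). The only loose end on your side is the one the paper also has: the M\textbf{Sub}-LE half of part 2 still leans on the segment-spectrum machinery rather than being self-contained.
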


\noindent See the appendix for the proof. The following result obtains $k$ central nodes for the path graph w.r.t. Defn~\ref{def:optim:all3}[ii](M\textbf{Sub}-LE) and relates it to the eigenvector corresponding to $(k+1)^{th}$ smallest eigenvalue of the Laplacian matrix. This is generalization of part~(3) (the Fiedler vector based center) of Theorem~\ref{thm:1port_path_Eigenshift}\cite{BapatSukanta}. \vspace{-2mm}
\begin{theorem}\label{thm:kports_path}
	Let $n$ be divisible by $k$ and let $n/k$ be odd. Consider the path graph $P_n$. Define the $k$ nodes $p_i^*:=\cfrac{(2i-1)n+k}{2k}$ where $i=1, 2, \dots, k$: these are the physical centers of the $k$ equal parts of $P_n$. 
	
	\noindent Then, the following hold about the $k$ central nodes w.r.t. Defn.~\ref{def:optim:all3}[ii](M\textbf{Sub}-LE). \vspace{-2mm}
	\begin{enumerate}
		\item $p_1^*,~p_2^*, \dots, p_k^*$ form the unique central nodes w.r.t. Defn.~\ref{def:optim:all3}[i](MP\textbf{L}SE) and the corresponding optimal value of $\lambda_{\max}(^{S^*}\hat{Z})=\cos(k\pi/n)$.
		\item Let $v_{k+1}\in \R^n$ be the eigenvector corresponding to the eigenvalue $\lambda_{k+1}$ and let $v_{k+1}(j)$ denotes the $j^{th}$ component of $v_{k+1}$. Then, for $j=\{1,2, \dots, n\}$, $v_{k+1}(j)=0$ $\iff~~j\in \{p_1^*,~p_2^*, \dots, p_k^*\}$.
	\end{enumerate}
\end{theorem}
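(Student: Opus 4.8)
The plan is to establish part~(2) first, since it is a direct computation from Proposition~\ref{prop:pathEigvalEigvec} and it supplies the eigenvector needed for part~(1). Writing $m:=n/k$ (odd, by hypothesis) and using $v_{k+1}(p)=\cos\bigl(\pi k(p-0.5)/n\bigr)$, I would substitute $p=p_i^*=\frac{(2i-1)n+k}{2k}$, note that $p_i^*-0.5=\frac{(2i-1)n}{2k}$, and obtain $v_{k+1}(p_i^*)=\cos\bigl(\pi(2i-1)/2\bigr)=0$ for every $i$. For the converse I would solve $\cos\bigl(\pi k(p-0.5)/n\bigr)=0$ over real $p$, giving $p=\tfrac12+\frac{(2\ell+1)n}{2k}$, and then count the solutions lying in $\{1,\dots,n\}$: exactly the $k$ values $\ell=0,\dots,k-1$, which coincide with $p_1^*,\dots,p_k^*$. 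The hypothesis that $m$ is odd is precisely what makes these numbers integers, so that they are genuine node indices.

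For part~(1) the key structural fact is that deleting $S^*=\{p_1^*,\dots,p_k^*\}$ disconnects $P_n$ (consecutive centers are $m$ apart), so ${}^{S^*}\hat{Z}=I-\tau\,{}^{S^*}\hat{L}$ is block diagonal: two end segments of length $(m-1)/2$ and $k-1$ interior segments of length $m-1$. I would then show $\lambda_{\min}({}^{S^*}\hat{L})=\lambda_{k+1}$ without computing each block's spectrum. Restricting $v_{k+1}$ to the surviving nodes yields a vector $\hat v$ with ${}^{S^*}\hat{L}\,\hat v=\lambda_{k+1}\hat v$, the deleted-neighbour terms vanishing precisely because $v_{k+1}$ is zero on $S^*$ by part~(2). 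Since the only zeros of $v_{k+1}$ sit at $S^*$, $\hat v$ has constant sign on each block; as each block is an irreducible symmetric M-matrix, a sign-definite eigenvector must be its Perron eigenvector and hence belongs to the \emph{smallest} eigenvalue. Thus each block has smallest eigenvalue $\lambda_{k+1}=2\bigl(1-\cos(k\pi/n)\bigr)$, whence $\lambda_{\max}({}^{S^*}\hat{Z})=1-\tau\lambda_{k+1}$, which equals $\cos(k\pi/n)$ at the natural normalization $\tau=\tfrac12$.

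Optimality then follows from Cauchy interlacing: since ${}^{S}\hat{L}$ is $L$ with $k$ rows and the matching $k$ columns deleted, $\lambda_{\min}({}^{S}\hat{L})=\lambda_1({}^{S}\hat{L})\le\lambda_{k+1}(L)$ for \emph{every} admissible $S$. As $\lambda_{\max}({}^{S}\hat{Z})=1-\tau\lambda_{\min}({}^{S}\hat{L})$ with $\tau>0$, minimizing the largest eigenvalue of ${}^{S}\hat{Z}$ is equivalent to maximizing $\lambda_{\min}({}^{S}\hat{L})$, which is capped at $\lambda_{k+1}$ and attained at $S^*$; note that the optimal \emph{set} is therefore independent of $\tau$, consistent with the paper's standing assumption. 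For uniqueness I would use a length-budget argument: any $S$ attaining equality must partition $P_n$ into blocks each with smallest Laplacian eigenvalue $\ge\lambda_{k+1}$, and using the closed forms $2\bigl(1-\cos(\pi/(\ell+1))\bigr)$ for a both-ends-grounded block of length $\ell$ and $2\bigl(1-\cos(\pi/(2\ell+1))\bigr)$ for a free-grounded end block, this caps interior blocks at length $m-1$ and end blocks at $(m-1)/2$. Since the $k(m-1)$ surviving nodes saturate these caps only for the balanced partition, $S=S^*$ is forced.

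I expect the main obstacle to be this uniqueness step rather than the value or the optimality bound. One must rule out removals that touch the extreme nodes $1$ or $n$, adjacent removals, and generally unbalanced removals; each such deviation lowers the block count and forces some block beyond its length cap, hence its smallest eigenvalue below $\lambda_{k+1}$. Making this counting airtight across all boundary cases, together with pinning down the free-grounded end-block spectrum $2\bigl(1-\cos((2j-1)\pi/m)\bigr)$ that fixes the end-block cap, is the delicate part; by contrast, the eigenvector-restriction plus interlacing core of the argument is comparatively short.
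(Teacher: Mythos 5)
Your proof is correct, and its core takes a genuinely different route from the paper's. The paper identifies the blocks of the grounded matrix as tridiagonal Toeplitz and pseudo-Toeplitz matrices, imports their spectra from the cited tridiagonal-matrix formulas, and then runs a balancing argument (via the monotonicity in Lemma~\ref{lem:perturbedLvsN}) both to locate the optimal cut points and to exclude every other choice by a case analysis on block sizes. You instead (i) obtain the optimal value by restricting $v_{k+1}$ to the surviving nodes --- the vanishing of $v_{k+1}$ on $S^*$ makes the restriction an eigenvector of each block, and sign-definiteness plus Perron--Frobenius pins it to each block's \emph{smallest} eigenvalue --- and (ii) obtain optimality over all $S$ in one line from Cauchy interlacing, $\lambda_{\min}({}^{S}\hat L)\leqslant\lambda_{k+1}(L)$, a bound the paper never invokes. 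Interlacing buys a clean a priori optimality certificate independent of any enumeration of block configurations; the paper's explicit spectra buy that uniqueness falls out of the same computation, whereas you must still re-import the closed forms for the two block types to run the length-budget argument, and that pigeonhole count (covering deletions at or adjacent to the endpoints) is essentially the paper's case analysis, so the two proofs converge there. You are also more careful than the paper on one point: $\lambda_{\max}({}^{S^*}\hat Z)=1-2\tau\bigl(1-\cos(k\pi/n)\bigr)$ equals the stated $\cos(k\pi/n)$ only at $\tau=\tfrac12$, which the paper uses implicitly via the Kouachi formula with $a=c=\tfrac12$ even though equation~\eqref{eq:tau:inequality} requires $\tau<\tfrac12$ strictly for a path; as you note, the optimal \emph{set} is $\tau$-independent, so only the displayed value is affected.
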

\noindent See the appendix for the proof. The following theorem states the equivalence of Defn.~\ref{def:optim:all3}[i](MP\textbf{L}SE) and [iii](M\textbf{Sup}-LE) for a general simple graph $G$.\vspace{-2mm}
\begin{theorem}\label{thm:superstochastic_eigenshift}
	Consider a general simple graph $G$ with $n$ nodes having Laplacian matrix $L_n$ and the stochastic matrix $Z_n=I_n-\tau L_n$. Let $|S|=k$. Let the Laplacian matrix $L_n$ be perturbed to $ ^S\tilde{L}_n:=L_n-\epsilon\sum_{j\in S}e_je_j^T$ and the stochastic matrix $Z_n$ be perturbed to $^S\tilde{Z}_n:=Z_n+\epsilon\sum_{j\in S}e_je_j^T$.
		 
	\noindent Then, the set $S_1^*$ that maximizes the smallest eigenvalue of perturbed Laplacian matrix by \\
	Defn.~\ref{def:optim:all3}[iii](M\textbf{Sup}-LE) and the set $S_2^*$ that minimizes the largest eigenvalue of $^S\!\tilde{Z}_n$ by\\ Defn.~\ref{def:optim:all3}[iii](M\textbf{Sup}-LE), are same\footnote{In case of non-uniqueness of the optimization in LHS/RHS in equation \eqref{eq:MPLSEequivSup}, the equation is to be understood as the sets which maximise $\lambda_{\min} (^S\Lmod_{n})$ is same as the sets which minimise $\lambda_{\max} (^S\!\tilde{Z}_n)$.} $k$-centers of the network i.e. 
	\begin{equation}\label{eq:MPLSEequivSup}
		\arg \underset{\underset{\mbox{with $|S|=k$}}{\mbox{all subsets S }}}{\max} \lambda_{\min} (^S\Lmod_{n})=\arg  \underset{\underset{\mbox{with $|S| =k$}}{\mbox{all subsets S }}}{\min}\lambda_{\max} (^S\!\tilde{Z}_n).
	\end{equation} 
\end{theorem}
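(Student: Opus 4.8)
The plan is to convert the super-stochastic minimization of Defn.~\ref{def:optim:all3}[iii](M\textbf{Sup}-LE) into a Laplacian maximization through a single strictly monotone spectral map, and then to match it against Defn.~\ref{def:optim:all3}[i](MP\textbf{L}SE). Writing $P_S := \sum_{j \in S} e_j e_j^T$ for the diagonal $0/1$ projector onto the coordinates of $S$, the two perturbed matrices of \eqref{eq:S:perturb:defns} are $^S\tilde{L}_n = L_n - \epsilon P_S$ and $^S\tilde{Z}_n = Z_n + \epsilon P_S$. First I would substitute $Z_n = I_n - \tau L_n$ from \eqref{eq:tau:inequality} to put the super-stochastic matrix in Laplacian form:
\[
  {}^S\tilde{Z}_n = I_n - \tau L_n + \epsilon P_S = I_n - \tau\left(L_n - \tfrac{\epsilon}{\tau}P_S\right) = I_n - \tau\, M_S(\epsilon/\tau),
\]
where I abbreviate $M_S(\delta) := L_n - \delta P_S$ (so that $M_S(\epsilon) = {}^S\tilde{L}_n$). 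Because the spectrum of $I_n - \tau M$ is $\{1 - \tau\mu : \mu \in \mathrm{spec}(M)\}$ and $\tau > 0$, the affine map $\mu \mapsto 1 - \tau\mu$ is strictly decreasing, so it carries the smallest eigenvalue of $M_S(\epsilon/\tau)$ to the largest eigenvalue of ${}^S\tilde{Z}_n$. This gives the exact identity
\[
  \lambda_{\max}\bigl({}^S\tilde{Z}_n\bigr) = 1 - \tau\,\lambda_{\min}\bigl(M_S(\epsilon/\tau)\bigr),
\]
valid for every $S$.

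Reading off the optimization consequence is then immediate: since $1 - \tau(\cdot)$ is strictly decreasing, minimizing the left-hand side over all $S$ with $|S| = k$ is the same problem as maximizing $\lambda_{\min}(M_S(\epsilon/\tau))$, and the optimizers coincide set-for-set,
\[
  \arg\min_{|S| = k}\lambda_{\max}\bigl({}^S\tilde{Z}_n\bigr) = \arg\max_{|S| = k}\lambda_{\min}\bigl(M_S(\epsilon/\tau)\bigr).
\]
Everything up to here is the elementary spectral bookkeeping of the maps $M \mapsto cM$ (positive scaling) and $M \mapsto I_n - M$.

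The one remaining gap — and the step I expect to be the crux — is that the Laplacian perturbation strength produced on the right is $\epsilon/\tau$ rather than the $\epsilon$ appearing in Defn.~\ref{def:optim:all3}[i](MP\textbf{L}SE). I would close this using the structural-invariance property recorded in the footnote to Definition~\ref{def:optim:all3}: for all sufficiently small positive $\delta$, the maximizer of $\lambda_{\min}(M_S(\delta))$ over $k$-subsets is determined by $L_n$ and the graph alone, independently of the precise value of $\delta$. This is delicate precisely because the subsets are \emph{not} separated at first order: for a connected graph the $\lambda_{\min}$-eigenvector of $L_n$ is the uniform vector $\mathbf{1}/\sqrt{n}$, so the first-order coefficient $\mathbf{1}^{\top} P_S \mathbf{1}/n = k/n$ is identical for every $k$-subset $S$, and the ranking of subsets is decided only at second (and higher) order — consistent with the $\epsilon^2$ terms appearing in Theorems~\ref{thm:1port_path_Eigenshift}--\ref{thm:2ports_path_Eigenshift}. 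One therefore has to argue that this higher-order ranking is scale-free in $\delta$, so that the two small values $\delta = \epsilon$ and $\delta = \epsilon/\tau$ induce the same ordering and hence the same maximizer. Granting this, $\arg\max_{|S|=k}\lambda_{\min}(M_S(\epsilon/\tau)) = \arg\max_{|S|=k}\lambda_{\min}(M_S(\epsilon)) = S^{*}_{MP\textbf{L}SE}$, and chaining this with the display above yields exactly \eqref{eq:MPLSEequivSup}. Thus the genuine content of the theorem is isolated in the scale-invariance of the optimizer, while the equivalence itself is forced by the strictly decreasing change of variables relating the two spectra.
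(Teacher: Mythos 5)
Your proposal is correct and follows essentially the same route as the paper: the paper's proof is exactly your strictly decreasing affine map $\mu\mapsto 1-\tau\mu$, decomposed into the three elementary steps of positive scaling by $\tau$, negation, and shifting by $I_n$. The one point where you go beyond the paper is in flagging the scale mismatch --- the paper silently identifies $I_n-\tau\,{}^S\Lmod_n=Z_n+\tau\epsilon\sum_{j\in S}e_je_j^T$ with the ${}^S\tilde{Z}_n=Z_n+\epsilon\sum_{j\in S}e_je_j^T$ of the statement, whereas you correctly isolate the $\epsilon$ versus $\epsilon/\tau$ discrepancy and observe that it must be closed by the ``for sufficiently small perturbation the optimizer is independent of its exact size'' property assumed in the footnote to Definition~\ref{def:optim:all3}, which is genuinely needed since subsets are only separated at second order.
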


\noindent See the appendix for the proof. 

\subsection{First order approximation of the perturbed Laplacian matrix smallest eigenvalue: \underline{general} graphs}
Theorems~\ref{thm:1port_path_Eigenshift}, \ref{thm:2ports_path_Eigenshift} and Lemma~\ref{lem:kports_path_EigenShift} reveal that the first order approximation of the smallest eigenvalue of the perturbed Laplacian matrix is $\cfrac{k\epsilon}{n}$ for a path graph: independent of the port indices at which perturbation is carried out. Extending this result to tree or general graph, we formulate the following theorem. \vspace{-2mm}
\begin{theorem}\label{thm:firstorder:general}
	Let $S \subset \{1,2,\dots, n\}$ for $|S|=k<n$. Let $L_n$ be the Laplacian matrix for any undirected, unweighted, simple and connected graph $G_n$ nodes and define $^S\Lmod_n=L_n+\epsilon \sum_{j\in S}e_{j} e_{j}^T$ for small $\epsilon>0$ and $e_{j}$ is $j^{th}$ column of $I_n$, the $n\times n$ identity matrix.\\
	Then the smallest eigenvalue of the perturbed Laplacian matrix up to the first order in $\epsilon$ is independent of:
	\begin{itemize}\vspace{-1mm}
		\item the number of edges in the graph $G_n$.
		\item the port indices (i.e. independent of set $S$) and its value is equal to $\cfrac{k\epsilon}{n}$. 
	\end{itemize}	
\end{theorem}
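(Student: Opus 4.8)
The plan is to treat the perturbed matrix $^S\Lmod_n = L_n + \epsilon E$, where $E := \sum_{j\in S} e_j e_j^T$ is the diagonal $0/1$ matrix indicating the selected nodes, as an analytic one-parameter family and to extract the coefficient of $\epsilon$ in its smallest eigenvalue by standard (nondegenerate) Rayleigh--Schr\"odinger perturbation theory. The key structural input is that, because $G_n$ is connected, the Laplacian $L_n$ is positive semidefinite with a \emph{simple} eigenvalue $\lambda_1 = 0$, whose unit-norm eigenvector is $v_1 = \frac{1}{\sqrt{n}}\mathbf{1}$, the normalized all-ones vector. This is the one place connectedness is used, and it is essential: simplicity of $\lambda_1$ is exactly what makes the first-order correction a clean scalar rather than an eigenvalue of a reduced matrix obtained by degenerate perturbation theory.

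First I would record that $L_n$ and $E$ are both symmetric, so $\epsilon \mapsto L_n + \epsilon E$ is a real-analytic family of symmetric matrices; since $\lambda_1 = 0$ is simple and $\lambda_2 > 0$ is bounded away from it, for all sufficiently small $\epsilon$ the smallest eigenvalue $\lambda_{\min}(\epsilon)$ is the analytic branch emanating from $\lambda_1 = 0$ (it cannot cross the branch emanating from $\lambda_2$ within an $O(\epsilon)$ window). Classical perturbation theory then supplies the expansion
\[
\lambda_{\min}(\epsilon) = 0 + \epsilon\, v_1^{T} E\, v_1 + O(\epsilon^2).
\]

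It remains only to evaluate the first-order coefficient, which is a one-line computation:
\[
v_1^{T} E\, v_1 = \sum_{j\in S} \bigl(e_j^{T} v_1\bigr)^2 = \sum_{j\in S} \frac{1}{n} = \frac{k}{n},
\]
since each component of $v_1$ equals $1/\sqrt{n}$ and $|S| = k$. Hence $\lambda_{\min}(\epsilon) = \frac{k\epsilon}{n} + O(\epsilon^2)$, and the first-order term $\frac{k\epsilon}{n}$ depends only on the cardinality $k = |S|$ --- not on which specific nodes lie in $S$ --- and is built solely from $v_1 = \frac{1}{\sqrt{n}}\mathbf{1}$, which is the same for every connected graph on $n$ nodes irrespective of its edge set. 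This simultaneously yields both claimed independences: of the port indices $S$ and of the number of edges.

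The only genuine obstacle is the degeneracy issue flagged above: if $G_n$ were disconnected, the zero eigenvalue would have multiplicity greater than one, the naive scalar $v_1^{T} E\, v_1$ would be ill-defined, and the correct first-order shifts would be the eigenvalues of the compressed matrix $V_0^{T} E\, V_0$ (with $V_0$ an orthonormal basis of the kernel of $L_n$), which in general \emph{do} depend on $S$. Connectedness removes this obstacle entirely, so everything else is routine.
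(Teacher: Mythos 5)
Your proposal is correct and follows essentially the same route as the paper: both reduce the claim to the standard first-order perturbation formula for the simple eigenvalue $\lambda_1=0$ and evaluate $v_1^T\bigl(\sum_{j\in S}e_je_j^T\bigr)v_1 = k/n$ with $v_1=\tfrac{1}{\sqrt{n}}\mathbf{1}$ (the paper re-derives that formula by expanding the eigenvalue equation and premultiplying by $v_1^T$, whereas you cite it directly). Your explicit justification that connectedness makes $\lambda_1=0$ simple --- so the nondegenerate formula applies --- is a point the paper leaves implicit, but it does not change the substance of the argument.
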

\noindent See the appendix for the proof.

\subsection{Scaling with respect to number ports and number of nodes} 
We saw in the previous subsection that the linear approximation is independent of the port index for an undirected, unweighted, general simple graph (i.e. path graph, general tree and also for graphs with cycles). We now study the $2^{nd}$ order term, which plays the key role about optimality of port indices. In this subsection, we first observe through an example in Figure~\ref{fig:IncreasingReturn} that a certain convexity exists between $\lambda_{\min}$ on number of ports (at port optimality). More precisely, from Figure~\ref{fig:IncreasingReturn}, we see that $\lambda_{\min}^*(k)$ at optimal $k$-ports is greater than $k$ times $\lambda_{\min}^*(1)$ at optimal $1$-port. 

For the special case of $k=1$ and $k=2$, we use Theorem~\ref{thm:1port_path_Eigenshift} and Theorem~\ref{thm:2ports_path_Eigenshift} to formulate the theorem below. To avoid divisibility constraints/assumptions, we assume $n$ to be large. \vspace{-2mm}
\begin{theorem}\label{thm:convexity}
	Consider a path graph of sufficiently large order $n$. Let $\lambda_{\min}(k)$ be the optimal shift in smallest eigenvalue as per Defn~\ref{def:optim:all3}[i](MP\textbf{L}SE) for $k$-ports selection ($k<n$).\\
	Then: $\lambda_{\min}(2)>2\times \lambda_{\min}(1)$ for up to $2^{nd}$ order approximation in $\epsilon$.
\end{theorem}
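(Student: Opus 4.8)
The plan is to reduce the claim to a direct comparison of the two second-order closed forms already established. From Theorem~\ref{thm:1port_path_Eigenshift} the optimal single-port shift is
\[
\lambda_{\min}(1)=\frac{\epsilon}{n}-\frac{\epsilon^2}{12n^2}(n^2-1),
\]
and from Theorem~\ref{thm:2ports_path_Eigenshift} the optimal two-port shift is
\[
\lambda_{\min}(2)=\frac{2\epsilon}{n}-\frac{\epsilon^2}{12n^2}(n^2-4).
\]
First I would form the quantity $\lambda_{\min}(2)-2\,\lambda_{\min}(1)$. The linear-in-$\epsilon$ terms cancel exactly (both contribute $2\epsilon/n$), which is consistent with Theorem~\ref{thm:firstorder:general}: the first-order shift equals $k\epsilon/n$ and is therefore additive, so it cannot be responsible for any strict inequality. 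What survives is purely the second-order contribution,
\[
\lambda_{\min}(2)-2\,\lambda_{\min}(1)=-\frac{\epsilon^2}{12n^2}(n^2-4)+\frac{\epsilon^2}{6n^2}(n^2-1)=\frac{\epsilon^2}{12n^2}(n^2+2),
\]
which is manifestly strictly positive for every $n$. Thus the entire analytic content of the theorem is a one-line simplification once the two closed forms are in hand, and the sign of the separation is governed entirely by the positive quantity $\tfrac{\epsilon^2(n^2+2)}{12n^2}$.

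The subtlety, and the step I expect to be the main obstacle, is the mismatch of divisibility hypotheses: Theorem~\ref{thm:1port_path_Eigenshift} is stated for $n$ odd, whereas Theorem~\ref{thm:2ports_path_Eigenshift} requires $n$ even with $n/2$ odd, so the two closed forms are never literally valid for the same $n$. This is precisely why the theorem is phrased for ``sufficiently large $n$''. To make the comparison rigorous I would argue that both optimal shifts admit an expansion $\lambda_{\min}(k)=\tfrac{k\epsilon}{n}-\tfrac{\epsilon^2}{12n^2}(n^2-k^2)+(\text{correction})$ as $n\to\infty$, where the parity-dependent discrepancy between the exact integer physical-center placement and the ideal $p_i^*$ contributes only a lower-order correction. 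Concretely, I would bound the error incurred when $n$ fails the exact divisibility condition by the change in $\lambda_{\min}$ caused by shifting the optimal port by at most one integer position, and show this error is of order $\epsilon^2/n^3$ or smaller, hence negligible against the $\Theta(\epsilon^2)$ separation established above for all large $n$.

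An alternative, cleaner route would sidestep the parity issue entirely by working directly with the spectral sums appearing in the two theorems rather than their evaluated polynomial forms: compare
\[
\frac{\epsilon^2}{4n}\sum_{j=2}^{n}\frac{\cos^2(\pi(j-1)/2)}{\sin^2(0.5\pi(j-1)/n)\{\sum_{p}\cos^2(\pi(j-1)(p-0.5)/n)\}}
\]
with its two-port analogue, whose numerator is $\{\cos(\pi(j-1)/4)+\cos(3\pi(j-1)/4)\}^2$, term by term for large $n$. The structural observation to exploit is that the two-port placement concentrates the perturbation on the lower (smoother) Fiedler-type modes more favourably than twice a single central port does, which is the underlying reason the strict inequality holds and is exactly the ``increasing returns'' convexity illustrated in Figure~\ref{fig:IncreasingReturn}. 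Either way the conclusion $\lambda_{\min}(2)>2\,\lambda_{\min}(1)$ follows, with the strict gap equal to $\tfrac{\epsilon^2(n^2+2)}{12n^2}$ up to second order in $\epsilon$.
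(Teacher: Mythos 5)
Your proposal is correct and follows essentially the same route as the paper's own proof: substitute the two optimal second-order closed forms from Theorems~\ref{thm:1port_path_Eigenshift} and~\ref{thm:2ports_path_Eigenshift}, cancel the first-order terms, and observe that $\lambda_{\min}(2)-2\lambda_{\min}(1)=\tfrac{\epsilon^2(n^2+2)}{12n^2}>0$. Your additional point about the parity mismatch (the one-port formula needs $n$ odd while the two-port formula needs $n$ even with $n/2$ odd, so the two expressions are never literally valid for the same $n$) is a genuine gap in the paper's own argument that the authors wave away with ``sufficiently large $n$''; your sketch of bounding the $O(\epsilon^2/n^3)$ correction from rounding the port locations is the right way to close it, and is more careful than what the paper actually does.
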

See the appendix for the proof.

\begin{figure}[h!]
	\centering
	\includegraphics[width=0.7\textwidth, height=.4\textwidth]{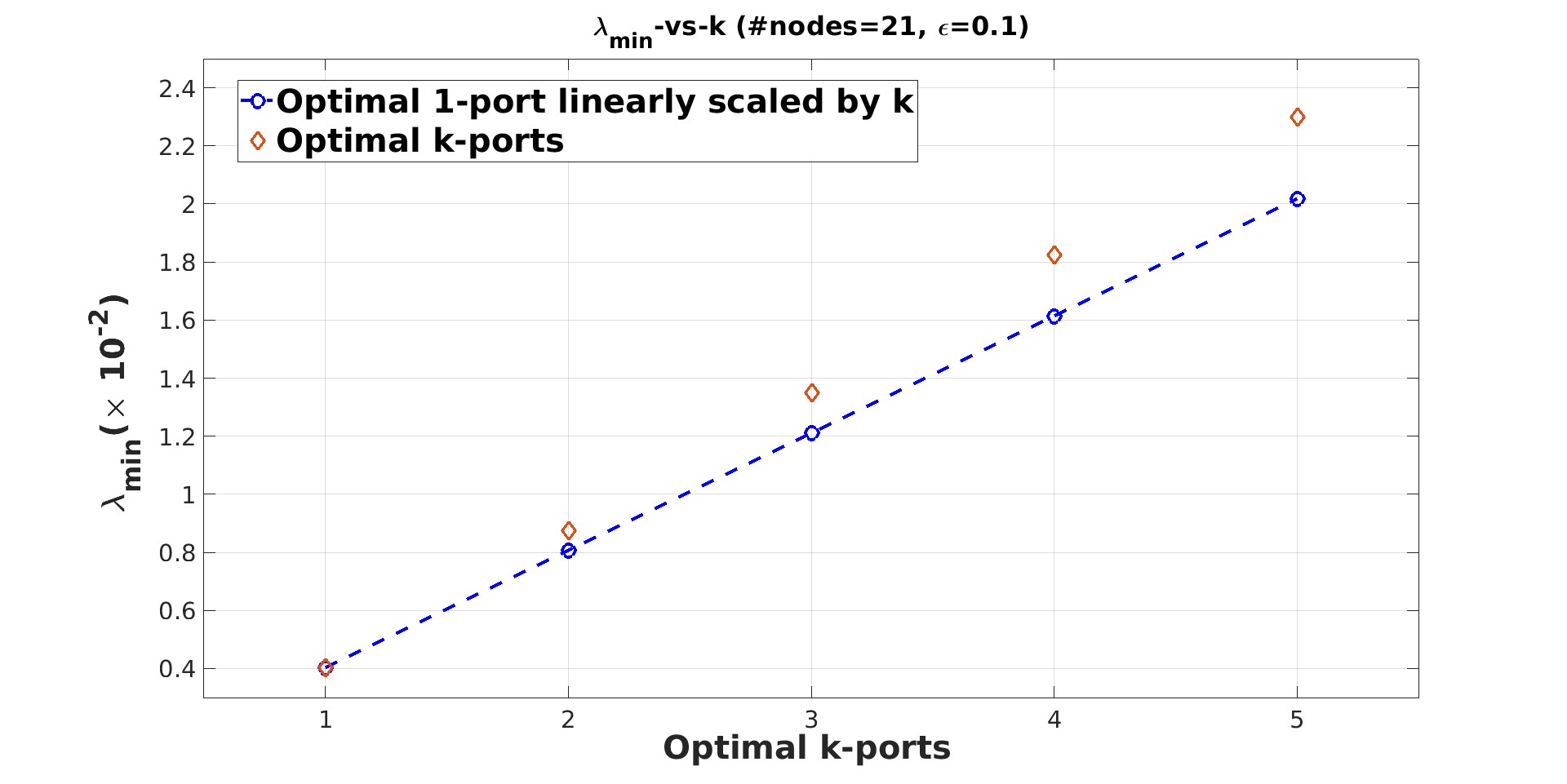}
	\caption{\small Path graph with multiple (optimal) ports (say $k$): shift achieved is \underline{more} than $k$ times the shift due to a single port}
\end{figure}\label{fig:IncreasingReturn}

\begin{remark}\label{rem:magnifyingReturn}
	Figure~\ref{fig:IncreasingReturn} shows the shift in $\lambda_{\min}$ with respect to the $k$-optimal ports, and it compares this shift with that of one optimal port linearly scaled to $k$ ports: the optimal $k$-port perturbation yields a greater shift than $k$ times $1$-port perturbation. Thus, $\lambda_{\min}(k)$ is a convex function of $k$. Thus provides more return than the scaled investment (in contrast with the law of diminishing returns). In a circuit analogy interpretation, $k$-port perturbation represents $k$ different passages for discharge, enabling the capacitor to discharge from the \underline{nearest} leakage resistance. Consequently, discharge through the $k$-optimal ports are faster compared to a single passage scaled by $k$. Thus, an RC circuit helps explain the convexity. Further, the circuit interpretation also explains a plausible reason for why we have a case opposite of `law of diminishing returns'.
\end{remark}

\subsection{Relation between $1$-optimal port and $2$-optimal ports: path graphs}

\noindent In the path graph network, $P_n$ and $P_{2n}$, we investigate the question: if we select $1$-optimal port and $2$-optimal ports respectively, what is the relationship between the smallest eigenvalue shifts that occur in both cases? From a circuit's viewpoint connecting two circuit in ``cascade" (loosely speaking), the question is will the larger circuit discharge faster or slower? The following theorem states relation between the smallest eigenvalue of the perturbed Laplacian matrix for one port selection and two ports selection (for a $2n$ node path graph). \vspace{-2mm}
\begin{theorem}\label{thm:LnL2nfirstEigenvalue}
	Let $n$ be odd, consider an $n$-nodes path graph Laplacian $L_n$. Suppose $L_n$ is perturbed optimally for a 1-port selection to get: $\Lmod_n:=L_n+\epsilon e_{p^*}e_{p^*}^T$, with $p^*=\cfrac{(n+1)}{2}$.
	
	\noindent Suppose a $2n$-nodes path graph Laplacian $L_{2n}$ is perturbed optimally for 2-ports selection to get: \\
	$\Lmodd_{2n}:=L_{2n}+\epsilon \{e_{p^*_1}e_{p^*_1}^T+e_{p^*_2}e_{p^*_2}^T\}$, with $p_1^*=\cfrac{n+2}{4} $ ~and~$p_2^*=\cfrac{3n+2}{4}$. 
	
	\noindent Then $\lambda_{\min}(\Lmod_n)=\lambda_{\min}(\Lmodd_{2n})$. 	
\end{theorem}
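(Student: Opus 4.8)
The plan is to reduce the claimed equality to the closed-form second-order expressions already established in Theorems~\ref{thm:1port_path_Eigenshift} and~\ref{thm:2ports_path_Eigenshift}, after which the statement becomes a one-line algebraic identity under the substitution $n \mapsto 2n$. The key observation is that the optimal shift for two ports on a path of length $2n$ should be \emph{self-similar} to the optimal shift for one port on a path of length $n$, and the two earlier theorems make this precise.

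First I would verify that the hypotheses of both theorems are in force. For the single-port perturbation of the $n$-node path, Theorem~\ref{thm:1port_path_Eigenshift} applies because $n$ is odd. For the two-port perturbation of the $2n$-node path, Theorem~\ref{thm:2ports_path_Eigenshift} requires the order to be even and half the order to be odd; here the order is $2n$ (even) and half of it is $n$ (odd), so both conditions hold precisely because $n$ is odd, and the optimal two ports land at the physical centers of the two halves of $P_{2n}$. I would then substitute the optimal values: Theorem~\ref{thm:1port_path_Eigenshift} gives
\[
\lambda_{\min}(\Lmod_n) = \frac{\epsilon}{n} - \frac{\epsilon^2}{12n^2}(n^2-1),
\]
while Theorem~\ref{thm:2ports_path_Eigenshift} evaluated with its order parameter set to $2n$ gives
\[
\lambda_{\min}(\Lmodd_{2n}) = \frac{2\epsilon}{2n} - \frac{\epsilon^2}{12(2n)^2}\big((2n)^2-4\big).
\]

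Finally I would simplify the second expression: the first-order term collapses to $2\epsilon/(2n) = \epsilon/n$, and the second-order term becomes
\[
-\frac{\epsilon^2}{48n^2}(4n^2-4) = -\frac{\epsilon^2}{12n^2}(n^2-1),
\]
so the two values agree term by term up to second order in $\epsilon$, giving $\lambda_{\min}(\Lmod_n) = \lambda_{\min}(\Lmodd_{2n})$ as claimed. I do not expect a genuine obstacle here: the entire content is the coincidence of the two already-proved closed forms under doubling of the path length, and the only point that requires care is the parity/divisibility bookkeeping needed to confirm that both earlier theorems may legitimately be invoked (this is exactly what the hypothesis ``$n$ odd'' secures for both $P_n$ and $P_{2n}$).
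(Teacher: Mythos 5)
Your proposal is correct and follows essentially the same route as the paper's proof: both invoke the closed-form optimal values from Theorems~\ref{thm:1port_path_Eigenshift} and~\ref{thm:2ports_path_Eigenshift}, substitute $2n$ for the order parameter in the two-port formula, and verify the resulting algebraic identity $\frac{2\epsilon}{2n}-\frac{\epsilon^2}{12(2n)^2}\bigl((2n)^2-4\bigr)=\frac{\epsilon}{n}-\frac{\epsilon^2}{12n^2}(n^2-1)$. Your explicit check that the parity hypotheses of both theorems are secured by $n$ being odd is a small but welcome addition that the paper leaves implicit.
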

See the appendix for the proof. Using the Cauchy interlacing theorem\footnote{The eigenvalues of a real symmetric matrix are interlaced (with non-strict inequality) with the eigenvalues of its principal sub-matrix.} \cite[Theorem~10.1.1]{Parlett:SymmetricEig} interlacing is expected; in the above theorem we state equality for the smallest eigenvalues of $\Lmod_n$ and $\Lmodd_{2n}$. The above result might not be too surprising from a circuit's viewpoint given the symmetry in the network. The result below also is true, though the proof is quite tedious and also due to paucity of space and due to lack of a clear motivation/significance, we include the statement below as only an `observation' and do not pursue a proof. This result below relates other eigenvalues of $\Lmod_{n}$ and $\Lmodd_{2n}$.
\begin{observation}
	Let $n$ be odd, consider an $n$-nodes path graph Laplacian $L_n$. Suppose $L_n$ is perturbed by arbitrary $\epsilon$ 
	for 1-port selection: $\Lmod_n:=L_n+\epsilon e_{p^*}e_{p^*}^T$, with $p^*=\cfrac{(n+1)}{2}$.
	The eigenvalues of the perturbed Laplacian matrix $\Lmod_n$ is
	$\lambda_1< \lambda_2< \dots< \lambda_n$.\\
	Further, suppose a $2n$-nodes path graph Laplacian $L_{2n}$ is perturbed by arbitrary $\epsilon$ for 2-ports selection: ($\Lmodd_{2n}:=L_{2n}+\epsilon \{e_{p^*_1}e_{p^*_1}^T+e_{p^*_2}e_{p^*_2}^T\}$, with $p_1^*=\cfrac{n+2}{4}$ ~and~$p_2^*=\cfrac{3n+2}{4}$). The eigenvalues
	of the perturbed Laplacian matrix $\Lmodd_{2n}$ is $\mu_1< \mu_2< \dots< \mu_{2n}$.\\
	Then the following
	hold: $\lambda_1=\mu_{1}<\mu_{2}<\lambda_2=\mu_{3}<\mu_{4} < \lambda_3=\mu_{5}< \dots< \lambda_n=\mu_{2n-1}<\mu_{2n}$. 
\end{observation}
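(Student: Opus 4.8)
The plan is to exploit the reflection symmetry of $P_{2n}$ to block-diagonalise $\Lmodd_{2n}$, to identify one block as \emph{exactly} $\Lmod_n$, and to recover the remaining $n$ eigenvalues by a rank-one positive interlacing argument. First I would introduce the reversal permutation $R\in\R^{2n\times 2n}$ with $Re_i=e_{2n+1-i}$. Reversing the node order leaves the path invariant, so $RL_{2n}R=L_{2n}$. The two perturbed ports are the physical centres of the two halves of $P_{2n}$, hence mirror images about the midpoint: $p_1^*+p_2^*=2n+1$ with $p_1^*=(n+1)/2$. Consequently $R\bigl(e_{p_1^*}e_{p_1^*}^T+e_{p_2^*}e_{p_2^*}^T\bigr)R=e_{p_1^*}e_{p_1^*}^T+e_{p_2^*}e_{p_2^*}^T$, so $R$ commutes with $\Lmodd_{2n}$. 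Since $R^2=I$, the space $\R^{2n}$ splits orthogonally into the symmetric subspace ($v_{2n+1-i}=v_i$) and the antisymmetric subspace ($v_{2n+1-i}=-v_i$), each of dimension $n$, and $\Lmodd_{2n}$ preserves both. Thus the $2n$ eigenvalues of $\Lmodd_{2n}$ are the disjoint union of the eigenvalues of the two restricted blocks.

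Next I would read off the two blocks in the coordinates $w=(v_1,\dots,v_n)$. For a symmetric vector the cross edge $(n,n+1)$ contributes nothing (because $v_{n+1}=v_n$), so rows $1,\dots,n$ of $\Lmodd_{2n}$ reproduce the Laplacian part $L_n$ exactly; moreover $p_2^*$ folds onto $p_1^*=(n+1)/2=p^*$, so the perturbation contributes precisely $\epsilon e_{p^*}e_{p^*}^T$ (the factor two from the two ports is cancelled by only counting the row at $p^*$ once). Hence the symmetric block equals $\Lmod_n$, and its eigenvalues are exactly $\lambda_1<\dots<\lambda_n$. For an antisymmetric vector $v_{n+1}=-v_n$, so the cross edge injects an extra $2v_n$ at node $n$; the same bookkeeping yields the antisymmetric block $\Lmod_n+2e_ne_n^T$, whose ordered eigenvalues I call $\alpha_1<\dots<\alpha_n$. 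Getting this boundary term $+2e_ne_n^T$ correct (Neumann versus Dirichlet condition at the fold) is the one place demanding careful indexing.

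Finally I would prove the strict interlacing $\lambda_j<\alpha_j<\lambda_{j+1}$. Because $2e_ne_n^T\succeq 0$ has rank one, Weyl's rank-one update interlacing gives $\lambda_j\leqslant\alpha_j\leqslant\lambda_{j+1}$; the strictness is the crux of the whole statement, since non-strict interlacing is immediate and the observation asserts strict inequalities. Writing $q_{nj}$ for the last component of the $j$-th eigenvector of $\Lmod_n$, the $\alpha$'s are the roots of the secular equation $1+2\sum_{j}\tfrac{q_{nj}^2}{\lambda_j-\alpha}=0$, and strict separation holds provided the $\lambda_j$ are distinct and every $q_{nj}\neq 0$. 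Both follow from $\Lmod_n$ being symmetric, tridiagonal and unreduced: such matrices have simple spectra, and all their eigenvectors have nonzero first and last entries (otherwise the eigen-recurrence forces the whole vector to vanish). Combining, the full spectrum interleaves as $\lambda_1<\alpha_1<\lambda_2<\alpha_2<\dots<\lambda_n<\alpha_n$, i.e. $\mu_{2j-1}=\lambda_j$ and $\mu_{2j}=\alpha_j$, which is exactly the claimed chain. The principal obstacle is thus not the eigenvalue coincidences $\lambda_j=\mu_{2j-1}$ (these drop out of the symmetry decomposition) but securing every strict inequality, which is precisely what the unreduced-tridiagonal structure delivers.
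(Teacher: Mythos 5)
Your proof is correct, and there is nothing in the paper to compare it against: the authors explicitly decline to prove this observation, remarking only that it ``is true, though the proof is quite tedious.'' Your reflection-symmetry argument supplies what is missing, and it is not tedious. The decomposition is sound: $R$ commutes with $\Lmodd_{2n}$ because the two perturbed ports are mirror images ($p_1^*+p_2^*=2n+1$); the symmetric block is exactly $\Lmod_n$ since the cross edge $(n,n+1)$ vanishes under the fold $v_{n+1}=v_n$ and $p_2^*$ folds onto $p^*$; and the antisymmetric block is $\Lmod_n+2e_ne_n^T$ (a check at $n=3$, $\epsilon=0$ confirms both blocks). The strict interlacing via the secular equation is also right, and you correctly isolate the crux: simplicity of the $\lambda_j$ and non-vanishing of the last eigenvector components both follow from $\Lmod_n$ being unreduced symmetric tridiagonal, a property the paper itself flags in its introduction. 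An even shorter route to strictness, if you want one: if $\Lmod_n y=\lambda y$ and $(\Lmod_n+2e_ne_n^T)x=\lambda x$ for the same $\lambda$, then $0=y^T(\Lmod_n-\lambda I)x=-2x_ny_n$, contradicting $x_n,y_n\neq 0$. Judging from the surrounding machinery (the $\psi_m$ characteristic-polynomial recurrences of Section 4), the authors presumably envisioned a determinant computation, which would indeed be laborious and would only painfully recover the exact coincidences $\lambda_j=\mu_{2j-1}$; your argument yields them for free and is exact in $\epsilon$ rather than a second-order approximation like most of the paper's estimates. One caveat worth stating explicitly in a write-up: the observation as printed gives $p_1^*=(n+2)/4$ and $p_2^*=(3n+2)/4$, which are not integers for odd $n$; you have silently (and correctly) read them as the physical centers $(n+1)/2$ and $(3n+1)/2$ of the two halves of $P_{2n}$, i.e., the two-port formulas with $n$ replaced by $2n$.
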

The above result is obvious for $\epsilon=0$ (as per (ii) of Proposition~\ref{prop:pathEigvalEigvec} from $P_n$ to $P_{2n}$ the eigenvalues are interpolated): the significance is that the result is in-fact is true for $\epsilon \neq 0$ too.

The following conjecture points towards the insensitivity of the $\lambda_{\min}$ with respect to edge addition at the optimal port selection: this is a generalization of Theorem~\ref{thm:LnL2nfirstEigenvalue} as its accommodates for all pair combinations of nodes for edge addition.
\begin{conjecture}\label{conj:PnvsP2n}
       Let $n$ be odd, consider a path graph $P_n$ Laplacian matrix $L_n$. Suppose for 1-port selection $L_n$ is perturbed to $\Lmod_n:=L_n+\epsilon e_{p^*}e_{p^*}^T$, with $p^*=\frac{(n+1)}{2}$. Denote the optimal perturbed path graph $\Pp$. Suppose a graph is formed as $\Pp\cup \Pp$ and suppose one edge is added between any pair of nodes of the graph $\Pp\cup \Pp$ such that the resulting graph is connected:  denote the resulting connected graph on $2n$ nodes
as $\Pvar$. Then, $\lambda_{\min}(\Pp)=\lambda_{\min}(\Pp\cup \Pp)=\lambda_{\min}(\Pvar)$.
\end{conjecture}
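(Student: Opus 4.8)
The plan is to recast the three-way equality as an elementary statement about a rank-one positive semidefinite update of a block-diagonal matrix whose smallest eigenvalue is (at least) doubly degenerate. I would first dispose of the first equality, which is immediate: the (grounded) Laplacian of $\Pp\cup\Pp$ is block diagonal,
\[
 L_{\Pp\cup\Pp}=\begin{pmatrix}\Lmod_n & 0\\ 0 & \Lmod_n\end{pmatrix},
\]
so its spectrum is that of $\Lmod_n$ with doubled multiplicity. Writing $\mu:=\lambda_{\min}(\Lmod_n)=\lambda_{\min}(\Pp)$, the value $\mu$ therefore occurs in $L_{\Pp\cup\Pp}$ with multiplicity at least two (exactly two, since by Proposition~\ref{prop:pathEigvalEigvec} and the smallness of the grounding term the bottom eigenvalue of $\Lmod_n$ is simple). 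If $v$ denotes the unit $\mu$-eigenvector of $\Lmod_n$, the $\mu$-eigenspace of $L_{\Pp\cup\Pp}$ contains the two independent vectors $u_1=(v,0)^{T}$ and $u_2=(0,v)^{T}$.

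For the second equality I would use that adding one edge $\{a,b\}$, with $a$ in the first copy and $b$ in the second, is precisely the rank-one positive semidefinite update $L_{\Pvar}=L_{\Pp\cup\Pp}+ww^{T}$ with $w=e_a-e_b$. Invoking the standard interlacing inequalities for a rank-one positive semidefinite perturbation (the companion of the Cauchy interlacing theorem \cite{Parlett:SymmetricEig}), namely $\lambda_1(A)\le\lambda_1(A+ww^{T})\le\lambda_2(A)$ with $A=L_{\Pp\cup\Pp}$, and using $\lambda_1(A)=\lambda_2(A)=\mu$, squeezes $\lambda_{\min}(\Pvar)=\mu$, which is the second equality. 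More constructively, I would exhibit the surviving eigenvector directly: a one-line computation gives
\[
 (L_{\Pvar}-\mu I)(\alpha u_1+\beta u_2)=w\,[\alpha\,v(a)-\beta\,v(b)],
\]
so any nonzero $(\alpha,\beta)$ solving the single scalar equation $\alpha\,v(a)=\beta\,v(b)$ produces a genuine $\mu$-eigenvector of $\Pvar$. Since $ww^{T}\succeq 0$ forces $\lambda_{\min}(\Pvar)\ge\lambda_{\min}(\Pp\cup\Pp)=\mu$ by Weyl's inequality, and $\mu$ is now exhibited in the spectrum, equality follows.

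The algebra is routine; the points that need care are the multiplicity bookkeeping and the non-degeneracy of the constructed eigenvector. For the former I only ever use that $\mu$ occurs at least twice in $L_{\Pp\cup\Pp}$, which is automatic from the block structure whether or not $\mu$ is simple in $\Lmod_n$. For the latter, $\alpha\,v(a)=\beta\,v(b)$ is one linear constraint on two unknowns and hence always has a nonzero solution — even in the degenerate case $v(a)=v(b)=0$, where the whole two-dimensional eigenspace survives. I would also point out that the connectivity hypothesis on the added edge is inessential to the eigenvalue identity: the same rank-one argument fixes $\mu$ for an edge placed anywhere, which is exactly the insensitivity-to-edge-addition that the conjecture is meant to capture and which strengthens Theorem~\ref{thm:LnL2nfirstEigenvalue}. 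The one genuinely conceptual step — and the reason the claim can look harder than it is — is recognizing that the result is about the protected degeneracy of $\mu$ rather than about the path geometry; once reframed as a rank-one update of a matrix with a repeated smallest eigenvalue, the path structure and the closed forms of the earlier theorems become irrelevant.
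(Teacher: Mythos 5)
Your argument is correct, and it is worth stressing that the paper offers no proof to compare against: the statement is left as a conjecture, supported only by Theorem~\ref{thm:LnL2nfirstEigenvalue}, which handles the single special case where the added edge joins node $n$ of one copy to node $1$ of the other, and even there only via the second-order-in-$\epsilon$ closed forms of Lemmas~\ref{lem:perturb:shiftmax:one} and~\ref{lem:perturb:shiftmax:two}. Your route is genuinely different and strictly stronger on three counts. First, it is exact in $\epsilon$ rather than a truncation of a Taylor series, since it rests only on $L_{\Pvar}=L_{\Pp\cup\Pp}+ww^T$ with $w=e_a-e_b$, Weyl's inequality for the lower bound, and the existence of a nonzero $x=\alpha u_1+\beta u_2$ in the two-dimensional $\mu$-eigenspace with $w^Tx=\alpha v(a)-\beta v(b)=0$, which is simultaneously the Courant--Fischer upper bound and an explicit $\mu$-eigenvector of $\Pvar$. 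Second, it covers every admissible edge placement at once, whereas the paper's perturbative computation is tied to one specific $\Pvar$. Third, it makes no use of the path structure at all: the only ingredient is that the smallest eigenvalue of the block-diagonal matrix is (at least) doubly degenerate, so a rank-one positive semidefinite update is squeezed by $\lambda_1(A)\le\lambda_1(A+ww^T)\le\lambda_2(A)$; the identity therefore holds verbatim for the disjoint union of two copies of \emph{any} perturbed graph, with the path geometry and the closed-form eigenvalue expressions playing no role. The one cosmetic caveat is that the citation of \cite{Parlett:SymmetricEig} should point to the rank-one-update interlacing result rather than the principal-submatrix Cauchy interlacing theorem the paper quotes, but your self-contained eigenvector construction makes that reference dispensable anyway. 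In short, you have not merely matched the paper --- you have resolved its Conjecture~\ref{conj:PnvsP2n} and, as a byproduct, given an exact proof of Theorem~\ref{thm:LnL2nfirstEigenvalue}.
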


\section{Auxiliary results}\label{sec:auxiliary:results}
This section deals with auxiliary results, using which we prove
the main results. 

\noindent The following lemma provides an expression of the smallest eigenvalue of the Laplacian matrix of the path graph after a rank-one perturbation at one of the diagonal entries.

\begin{lemma}\label{lem:1port_path_EigenShift}
	Let $n$ be odd and let $L_n$, the Laplacian matrix for the path graph of $P_n$. Suppose $L_n$ is perturbed to $L_n+\epsilon e_pe_p^T$ for sufficiently small $\epsilon$ with $e_p$, the $p^{th}$ column of $I_n$, the $n\times n$ identity matrix. \vspace{-2mm}
	\begin{itemize}
		\item[a)] Then, the smallest eigenvalue of the perturbed Laplacian matrix, $\Lmod_n:=L_n+ \epsilon e_pe_p^T$ is given by: 
		\begin{align*}
			\lambda_{min}(\Lmod_{n,\epsilon}, p)
			=\cfrac{\epsilon}{n}-\cfrac{\epsilon^2}{4n}\sum_{j=2}^{n}\cfrac{\cos^2\bigl(\pi (j-1)(p-0.5)/n\bigr)}{\sin^2\left(0.5\pi (j-1)/n \right)\left\{\sum_{i=1}^{n}\cos^2\left(\pi  (j-1)(i-0.5)/n\right)\right\}}+O(\epsilon^3)	
		\end{align*}
		\item[b)] Assume $p\in\mathbb{R}$ (instead of $p\in \mathbb{Z}$). Then $\lambda_{min}(\Lmod_{n,\epsilon}, p)$
		gets \underline{locally minimized} at $p=p^*=\frac{n+1}{2}$.\\
	\end{itemize}
\end{lemma}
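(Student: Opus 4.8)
The plan is to obtain part~(a) as a textbook second-order Rayleigh--Schr\"odinger perturbation computation and then to read off part~(b) from the $p$-dependence of the second-order coefficient. For part~(a), Proposition~\ref{prop:pathEigvalEigvec}(i) guarantees that the eigenvalues of $L_n$ are distinct, so $\lambda_1=0$ is a \emph{simple} eigenvalue whose eigenvector is the all-ones vector $v_1$ (the $j=1$ instance of Proposition~\ref{prop:pathEigvalEigvec}(iii)), with $\|v_1\|^2=n$. Since $V:=e_pe_p^T$ is positive semidefinite and rank one, Weyl's inequality confines every eigenvalue of $\Lmod_n=L_n+\epsilon V$ to within $\epsilon$ of the corresponding eigenvalue of $L_n$; hence for $\epsilon<\lambda_2$ the branch springing from $\lambda_1=0$ stays strictly smallest and is analytic in $\epsilon$. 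Writing $u_j:=v_j/\|v_j\|$, I would expand
\[
\lambda_{\min}(\Lmod_{n,\epsilon},p)=\epsilon\,\langle u_1,Vu_1\rangle-\epsilon^2\sum_{j=2}^{n}\frac{|\langle u_j,Vu_1\rangle|^2}{\lambda_j}+O(\epsilon^3).
\]
Because $\langle u_j,Vu_1\rangle=u_j(p)\,u_1(p)$ and $u_1(p)^2=1/n$, the first-order term is $\epsilon/n$ and the second-order term is $-\tfrac{\epsilon^2}{n}\sum_{j\ge2}u_j(p)^2/\lambda_j$. Substituting $\lambda_j=4\sin^2(0.5\pi(j-1)/n)$ and $u_j(p)^2=\cos^2(\pi(j-1)(p-0.5)/n)/\|v_j\|^2$ with $\|v_j\|^2=\sum_{i=1}^{n}\cos^2(\pi(j-1)(i-0.5)/n)$ reproduces the stated formula verbatim.

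For part~(b), only the second-order term carries $p$, so I write it as $-\tfrac{\epsilon^2}{4n}f(p)$ and observe that a local minimum of $\lambda_{\min}$ at $p^*$ is equivalent to a local \emph{maximum} of $f$ at $p^*$. A key simplification is that the normalizer is mode-independent: with $\theta_j:=\pi(j-1)/n$ the identity $\sum_{i=1}^{n}\cos((2i-1)\theta_j)=\sin(2n\theta_j)/(2\sin\theta_j)=0$ for $j\ge2$ gives $\|v_j\|^2=n/2$, so
\[
f(p)=\frac{2}{n}\sum_{j=2}^{n}\frac{\cos^2\!\big(\theta_j(p-0.5)\big)}{\sin^2(\theta_j/2)}.
\]
Differentiating in the real variable $p$ and evaluating at $p^*=(n+1)/2$, where $2\theta_j(p^*-0.5)=\pi(j-1)$, every summand of $f'(p^*)$ carries the factor $\sin(\pi(j-1))=0$, so $p^*$ is a critical point.

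The crux is the sign of $f''(p^*)$. Using $\cos(\pi(j-1))=(-1)^{j-1}$ one finds
\[
f''(p^*)=-\frac{4}{n}\sum_{m=1}^{n-1}(-1)^{m}a_m,\qquad a_m:=\frac{(\pi m/n)^2}{\sin^2(\pi m/(2n))}=4\Big(\frac{x_m}{\sin x_m}\Big)^2,\quad x_m:=\frac{\pi m}{2n}\in\Big(0,\frac{\pi}{2}\Big),
\]
so it suffices to prove $\sum_{m=1}^{n-1}(-1)^m a_m>0$. The sequence $a_m$ is strictly increasing because $x/\sin x$ increases on $(0,\pi/2)$: its numerator $\sin x-x\cos x$ vanishes at $0$ and has positive derivative $x\sin x$. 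As $n$ is odd, $n-1$ is even, and pairing the alternating sum as $(a_2-a_1)+(a_4-a_3)+\cdots+(a_{n-1}-a_{n-2})$ displays it as a sum of positive differences. Hence $f''(p^*)<0$, giving $\lambda_{\min}''(p^*)>0$ and the asserted local minimum.

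The main obstacle is exactly this sign determination: the second-order coefficient is an alternating sum over all $n-1$ non-trivial modes, and crude bounds do not fix its sign. Two structural facts rescue the argument---the mode-independence of $\|v_j\|^2$ (isolating the only $j$-dependence into $\theta_j^2/\sin^2(\theta_j/2)$) and the monotonicity of $x/\sin x$---which together let the \emph{even} number of alternating terms collapse into manifestly positive pairs. The oddness of $n$ is essential here: for even $n$ the parity reverses, the clean pairing leaves a large negative leftover term, and $p^*$ (no longer an integer) need not be a local minimum.
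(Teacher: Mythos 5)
Your proposal is correct and follows essentially the same route as the paper: part (a) is the same second-order eigenvalue perturbation computation that underlies the paper's Lemma~\ref{lem:kports_path_EigenShift} (specialized to $k=1$), and part (b) is the paper's argument verbatim in substance --- critical point from $\sin(\pi(j-1))=0$, then sign of the second derivative by pairing the alternating sum using the monotonicity of $x/\sin x$ on $(0,\pi/2)$. The only differences are that you work self-containedly (justifying analyticity of the smallest branch via Weyl's inequality) and explicitly evaluate $\|v_j\|^2=n/2$, a fact the paper uses implicitly when it asserts that $\mathscr{N}_j$ is constant.
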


\noindent See appendix for the proof. The following lemma provides an expression of the smallest eigenvalue of the Laplacian matrix of the path graph after perturbation at \underline{two} of its diagonal entries. 
\begin{lemma}\label{lem:2ports_path_EigenShift}
	Let $L_n$, the Laplacian matrix for the path graph of $n$ nodes with $\cfrac{n}{2}$ being odd be perturbed to $\Lmodd_n=L_n+ \epsilon e_{p_1}e_{p_1}^T+\epsilon e_{p_2}e_{p_2}^T$ for small $\epsilon>0$ and $e_{p_1}$, $e_{p_2}$ are the $p_1^{th}$ and $p_2^{th}$ columns of $I_n$, the $n\times n$ identity matrix respectively. Then the smallest eigenvalue of the perturbed Laplacian matrix, is given by:
		\hspace{-4mm}
		\begin{small}
		\begin{align*}
			\lambda_{min}(\Lmodd_{n,\epsilon}, p_1, p_2)&
			=\cfrac{2\epsilon}{n}-\cfrac{\epsilon^2}{4n}\sum_{j=2}^{n}\cfrac{\Bigl\{\cos\bigl(\pi (j-1)(p_1-0.5)/n\bigr)+\cos\bigl(\pi (j-1)(p_2-0.5)/n\bigr)\Bigr\}^2}{\sin^2\left(0.5\pi (j-1)/n \right)\left\{\sum_{i=1}^{n}\cos^2\left(\pi  (j-1)(i-0.5)/n\right)\right\}}+O(\epsilon^3)			
	\end{align*}
\end{small}\vspace{-4mm}
		
\end{lemma}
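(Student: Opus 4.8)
The plan is to treat this as a textbook second-order Rayleigh--Schr\"odinger perturbation computation for the smallest eigenvalue of $L_n$, now with a rank-two (rather than rank-one) diagonal perturbation, and then to substitute the explicit path-graph spectrum supplied by Proposition~\ref{prop:pathEigvalEigvec}. Write the perturbation as $\epsilon B$ with $B := e_{p_1}e_{p_1}^T + e_{p_2}e_{p_2}^T$, so that $\Lmodd_n = L_n + \epsilon B$. By Proposition~\ref{prop:pathEigvalEigvec}(i) the eigenvalues of $L_n$ are distinct; in particular $\lambda_1 = 0$ is a \emph{simple} eigenvalue, and since $v_1(p)=\cos(0)=1$ its eigenvector is $v_1 = \mathbf{1}$, giving the normalized eigenvector $u_1 = \tfrac{1}{\sqrt{n}}\mathbf{1}$. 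For $j \geq 2$ write $u_j := v_j/\norm{v_j}$ for the normalized $j$-th eigenvector, so $\norm{v_j}^2 = \sum_{i=1}^{n}\cos^2\!\bigl(\pi(j-1)(i-0.5)/n\bigr)$, which is precisely the inner sum appearing in the denominator of the claim.

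First I would invoke the standard second-order expansion for a simple eigenvalue of a symmetric matrix,
\begin{equation*}
\lmin(\Lmodd_{n,\epsilon}, p_1, p_2) = \lambda_1 + \epsilon\, u_1^T B\, u_1 + \epsilon^2 \sum_{j=2}^{n} \frac{\bigl(u_j^T B\, u_1\bigr)^2}{\lambda_1 - \lambda_j} + O(\epsilon^3),
\end{equation*}
legitimate here exactly because $\lambda_1$ is simple and separated from the rest of the spectrum. The first-order term is immediate: $B u_1 = \tfrac{1}{\sqrt{n}}(e_{p_1}+e_{p_2})$, whence $u_1^T B\, u_1 = \tfrac1n+\tfrac1n = \tfrac2n$, producing the leading term $\tfrac{2\epsilon}{n}$.

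Next I would evaluate the coupling coefficients using the eigenvector components of Proposition~\ref{prop:pathEigvalEigvec}(iii): $u_j^T B\, u_1 = \tfrac{1}{\sqrt{n}}\bigl(u_j(p_1)+u_j(p_2)\bigr) = \tfrac{1}{\sqrt{n}}\cdot\frac{\cos(\pi(j-1)(p_1-0.5)/n)+\cos(\pi(j-1)(p_2-0.5)/n)}{\norm{v_j}}$. Squaring yields the bracketed numerator $\{\cos(\cdots p_1)+\cos(\cdots p_2)\}^2$ over $n\,\norm{v_j}^2$. For the energy denominator I would use the half-angle identity $\lambda_j = 2\{1-\cos(\pi(j-1)/n)\} = 4\sin^2\!\bigl(0.5\pi(j-1)/n\bigr)$ from part (ii), so $\lambda_1-\lambda_j = -4\sin^2(0.5\pi(j-1)/n)$. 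Pulling the constant $\tfrac1n\cdot\tfrac14$ to the front then reproduces exactly the claimed $-\tfrac{\epsilon^2}{4n}\sum_{j=2}^{n}\{\cdots\}$, with $\norm{v_j}^2$ recovered as $\sum_{i=1}^{n}\cos^2(\pi(j-1)(i-0.5)/n)$. Note the cross term $2\,u_j(p_1)u_j(p_2)$ inside the square is carried along automatically, so no estimate beyond the one-port case (Lemma~\ref{lem:1port_path_EigenShift}) is needed; this is simply its rank-two specialization, and the divisibility hypothesis on $n$ plays no role in the expansion itself.

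The one point that genuinely requires care---and the only real obstacle---is justifying that the branch being expanded is in fact $\lmin$ of the perturbed matrix rather than some interior eigenvalue, and that it is analytic in $\epsilon$. Since $B$ is positive semidefinite, every eigenvalue is non-decreasing in $\epsilon$ by Weyl monotonicity; the branch emanating from $\lambda_1=0$ grows like $2\epsilon/n$ while every other branch starts at $\lambda_j \geq \lambda_2 > 0$. Using the spectral gap $\lambda_2-\lambda_1>0$, there is a threshold $\epsilon_0>0$ below which this branch stays strictly smallest and isolated, which simultaneously guarantees analyticity of the branch and its identification with $\lmin$, and hence the validity of the $O(\epsilon^3)$ remainder.
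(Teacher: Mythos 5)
Your proposal is correct and follows essentially the same route as the paper: the paper proves this lemma by specializing its $k$-port result (Lemma~\ref{lem:kports_path_EigenShift}) to $k=2$, and that $k$-port lemma is itself established by exactly the second-order Rayleigh--Schr\"odinger expansion you invoke (ansatz $\hat{v}_1+\epsilon\Cchi+\epsilon^2\psi$, $\lambda_1+\epsilon\beta_1+\epsilon^2\beta_2$, matching orders of $\epsilon$ and using orthogonality), combined with the explicit path-graph eigenstructure of Proposition~\ref{prop:pathEigvalEigvec}. Your only addition is the explicit justification, via $B\succeq 0$ and the spectral gap $\lambda_2>0$, that the analytic branch emanating from $0$ really is $\lambda_{\min}$ for small $\epsilon$ --- a point the paper leaves implicit.
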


\noindent See appendix for the proof. The following lemma provides an expression of the smallest eigenvalue of the Laplacian matrix of the path graph after perturbation of $k$ diagonal entries of the Laplacian matrix.
\begin{lemma}\label{lem:kports_path_EigenShift}
	Let $S \subset \{1,2,\dots, n\}$ with $|S|=k<n$. Let $L_n$, the Laplacian matrix for the path graph of $n$ nodes be perturbed for $k$ nodes selection to $^S\Lmod_n:=L_n+\epsilon \sum_{j\in S}e_{p_j} e_{p_j}^T$ for small $\epsilon>0$ with $e_{p_j}$ is the $p_j^{th}$ column of $I_n$, the $n\times n$ identity matrix. 
	
	\noindent Then the smallest eigenvalue of the Laplacian matrix is given by:\vspace{-2mm}
	\begin{small}	
	\begin{align*}
		\lambda_{min}(^S\Lmod_n)=\cfrac{k\epsilon}{n}-\cfrac{\epsilon^2}{4n}\sum_{j=2}^{n}\cfrac{\Bigl\{\sum_{i=1}^{k}\cos\bigl(\pi (j-1)(p_i-0.5)/n\bigr)\Bigr\}^2}{\sin^2\left(0.5\pi (j-1)/n \right)\left\{\sum_{i=1}^{n}\cos^2\left(\pi  (j-1)(i-0.5)/n\right)\right\}}+O(\epsilon^3).
	\end{align*}
	\end{small}
\end{lemma}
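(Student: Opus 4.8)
The plan is to treat $^S\Lmod_n = L_n + \epsilon P$, where $P := \sum_{i=1}^k e_{p_i}e_{p_i}^T$ is a positive semidefinite, diagonal, rank-$k$ perturbation, and to apply second-order (Rayleigh--Schr\"odinger) eigenvalue perturbation theory around the smallest eigenvalue of $L_n$. By Proposition~\ref{prop:pathEigvalEigvec}, the path Laplacian has the \emph{simple} eigenvalue $\lambda_1 = 0$ with (unnormalized) eigenvector the all-ones vector $\mathbf{1}$, and the remaining eigenvalues $0 < \lambda_2 < \cdots < \lambda_n$ are distinct. Since $P \succeq 0$, Weyl's inequality shows that no eigenvalue decreases under the perturbation; the eigenvalue issuing from $\lambda_1 = 0$ grows like $O(\epsilon)$, while every other eigenvalue stays within $O(\epsilon)$ of its strictly positive unperturbed value. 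Hence for all sufficiently small $\epsilon > 0$ the perturbed smallest eigenvalue $\lambda_{\min}({}^S\Lmod_n)$ is precisely the analytic branch emanating from $\lambda_1 = 0$, and because that eigenvalue is simple the standard non-degenerate expansion applies.

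First I would record the perturbation formula for the simple eigenvalue $\lambda_1$: with an orthonormal eigenbasis $\{v_j\}_{j=1}^n$ of $L_n$,
\[
  \lambda_{\min}({}^S\Lmod_n) = \lambda_1 + \epsilon\, (v_1^T P v_1) + \epsilon^2 \sum_{j=2}^n \frac{(v_j^T P v_1)^2}{\lambda_1 - \lambda_j} + O(\epsilon^3).
\]
Setting $\lambda_1 = 0$ and $v_1 = \mathbf{1}/\sqrt{n}$, the first-order coefficient is $v_1^T P v_1 = \tfrac1n \sum_{i=1}^k (\mathbf{1}^T e_{p_i})^2 = k/n$, giving the leading term $k\epsilon/n$ and reproducing the $k=1,2$ first-order values $\epsilon/n$ and $2\epsilon/n$ found in Lemmas~\ref{lem:1port_path_EigenShift} and~\ref{lem:2ports_path_EigenShift}.

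Next I would evaluate the second-order overlaps using the explicit eigendata of Proposition~\ref{prop:pathEigvalEigvec}. Writing $u_j(p) = \cos(\pi(j-1)(p-0.5)/n)$ for the unnormalized $j$-th eigenvector and $\|u_j\|^2 = \sum_{p=1}^n \cos^2(\pi(j-1)(p-0.5)/n)$ for its squared norm, one gets $v_j^T P v_1 = \frac{1}{\sqrt{n}\,\|u_j\|}\sum_{i=1}^k \cos(\pi(j-1)(p_i-0.5)/n)$, because $v_1$ is constant and $P$ merely samples the $p_i$-th coordinates. Squaring this, using the half-angle identity $\lambda_j = 2\bigl(1-\cos(\pi(j-1)/n)\bigr) = 4\sin^2\!\left(0.5\pi(j-1)/n\right)$, and inserting $\lambda_1 - \lambda_j = -\lambda_j$ into the sum produces exactly the stated second-order term, with $\|u_j\|^2$ appearing as the bracketed normalization factor in the denominator.

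The only conceptual content beyond the $k=1,2$ cases is that the rank-$k$ perturbation enters through $P v_1 = \sum_i e_{p_i}$, so the overlap $v_j^T P v_1$ is a \emph{sum} of $k$ cosines; squaring it automatically supplies both the diagonal $\cos^2$ contributions and the cross-terms between distinct ports, matching the $\bigl\{\sum_i \cos(\cdots)\bigr\}^2$ numerator. I expect the main (though routine) obstacle to be bookkeeping: confirming that the simple-eigenvalue expansion is legitimate (simplicity of $\lambda_1$ and distinctness of the $\lambda_j$, both guaranteed by Proposition~\ref{prop:pathEigvalEigvec}) and carefully carrying the normalization $\|u_j\|^2$ through to match the claimed closed form. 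No new estimate is required; the argument is a direct $k$-fold generalization of the computations already carried out for one and two ports.
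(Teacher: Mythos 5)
Your proposal is correct and follows essentially the same route as the paper: a second-order perturbation expansion of the simple eigenvalue $\lambda_1=0$ of the path Laplacian, with $v_1=\mathbf{1}/\sqrt{n}$ giving the first-order term $k\epsilon/n$ and the overlaps $v_j^TPv_1$ (computed from the explicit eigendata of Proposition~\ref{prop:pathEigvalEigvec}) giving the second-order sum. The only cosmetic difference is that you quote the standard Rayleigh--Schr\"odinger formula directly (and add a Weyl-inequality remark identifying the smallest-eigenvalue branch), whereas the paper re-derives the same coefficients $\beta_1,\beta_2$ by substituting the expansions into the eigenvalue equation and matching powers of $\epsilon$.
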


See appendix for the proof. The following proposition (from \cite{EigExpStacks}) formulates an expression for the eigenvector of a symmetric matrix perturbed by another small symmetric matrix. 
\begin{proposition}\label{prop:eigenvecExpan:stackexchange}
	\cite{EigExpStacks} Let $A\in \mathbb{R}^{n\times n}$ be a symmetric matrix with $n$ distinct eigenvalues $\lambda_{j}$, and corresponding eigenvector $v_j$. Let $P\in \mathbb{R}^{n\times n}$ be a symmetric matrix. Consider $\tilde{A}:=A+\epsilon P$, a perturbation of $A$, where $\epsilon\in\R$ is a small. \\
	Then, the perturbed eigenvector $\hat{v}_j$ is:\vspace{-7
		mm}
	\begin{equation}\label{eq:perturbed:eigenvector}
		 \hat{v}_j:=v_j +\epsilon \sum_{k=1, k\neq j}^{n}\cfrac{v_j^T P v_k}{\lambda_{j}-\lambda_{k}}v_k +O(\epsilon^2).
	\end{equation}	
\end{proposition}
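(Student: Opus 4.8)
The plan is to use standard first-order (Rayleigh--Schr\"odinger) perturbation theory, exploiting that a real symmetric matrix with distinct eigenvalues has an orthonormal eigenbasis that varies analytically with the perturbation parameter. First I would normalize the unperturbed eigenvectors so that $\{v_1,\dots,v_n\}$ is orthonormal; this is possible precisely because $A$ is symmetric with distinct eigenvalues, and it is what makes the projection step below clean. Since the $n$ eigenvalues are distinct, they remain distinct (hence simple) for all sufficiently small $\epsilon$, so for each fixed index $j$ there is a well-defined branch $\hat\lambda_j(\epsilon),\ \hat v_j(\epsilon)$ of the perturbed eigenpair that is analytic in $\epsilon$ near $0$ and reduces to $(\lambda_j,v_j)$ at $\epsilon=0$. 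This analyticity is the technical fact that legitimizes the power-series ansatz and is the only non-elementary ingredient.

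Next I would posit the expansions
\[
\hat v_j = v_j + \epsilon\, w_j + O(\epsilon^2),
\qquad
\hat\lambda_j = \lambda_j + \epsilon\,\mu_j + O(\epsilon^2),
\]
substitute them into the eigenvalue equation $(A+\epsilon P)\hat v_j = \hat\lambda_j \hat v_j$, and match powers of $\epsilon$. The $\epsilon^0$ terms reproduce $Av_j=\lambda_j v_j$, while equating the $\epsilon^1$ terms gives the governing relation
\[
A w_j + P v_j = \lambda_j w_j + \mu_j v_j.
\]
Writing the unknown correction in the orthonormal eigenbasis as $w_j=\sum_{k=1}^n c_k v_k$ converts this linear equation into one for the scalars $c_k$.

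The key step is to isolate each coefficient by taking the inner product of the $\epsilon^1$ relation with $v_m$ for $m\neq j$. Using $A v_m=\lambda_m v_m$, orthonormality ($v_m^T v_k=\delta_{mk}$), and the symmetry $v_m^T P v_j = v_j^T P v_m$, the term in $\mu_j$ drops out and one is left with $(\lambda_m-\lambda_j)\,c_m = -\,v_j^T P v_m$, hence $c_m = \dfrac{v_j^T P v_m}{\lambda_j-\lambda_m}$; distinctness of the eigenvalues guarantees every denominator is nonzero, so each off-diagonal coefficient is well-defined. The remaining coefficient $c_j$ (the component of $w_j$ along $v_j$) is a pure normalization degree of freedom: the intermediate normalization $v_j^T\hat v_j = 1$ (equivalently, unit-norm normalization to first order, which forces $v_j^T w_j=0$) sets $c_j=0$. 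Substituting back gives exactly
\[
\hat v_j = v_j + \epsilon \sum_{k=1,\,k\neq j}^{n} \frac{v_j^T P v_k}{\lambda_j-\lambda_k}\, v_k + O(\epsilon^2),
\]
as claimed.

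The main obstacle is not the algebra, which is routine, but the justification of the ansatz: one must know that the chosen eigenpair branch is smooth in $\epsilon$ so that the $O(\epsilon^2)$ remainder is genuine. This follows from analytic perturbation theory for simple eigenvalues of symmetric matrices, or concretely from the implicit function theorem applied to the map $(\epsilon,\lambda,v)\mapsto \bigl((A+\epsilon P-\lambda I)v,\ v^Tv-1\bigr)$ at a simple eigenvalue, and it is here that the hypothesis of distinct eigenvalues is essential.
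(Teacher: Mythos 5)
Your argument is correct and is the standard first-order (Rayleigh--Schr\"odinger) perturbation computation; the paper itself offers no proof of this proposition, merely citing the Stack Exchange source, and that source's derivation is the same expansion-and-projection argument you give. Your added care about orthonormalizing the $v_k$ and fixing the normalization of $\hat v_j$ (so that the component along $v_j$ vanishes), together with the analyticity justification for the series ansatz, actually fills in details the paper's statement leaves implicit.
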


The following lemma deals with the dependence of the smallest eigenvalue of the perturbed Laplacian matrix of the path graph $P_n$ ($P_{2n}$) on the value of $n$; the perturbation being at first diagonal entry for $L_n$ (first and last diagonal entries for $L_{2n}$). It also shows the relation between the smallest eigenvalue of the perturbed Laplacian matrix of the path graph $P_{2n}$, perturbed at first and last diagonal entries and $P_{n}$, perturbed at first diagonal entry.
\begin{lemma}\label{lem:perturbedLvsN}
	Consider a path graph of $n$ nodes, and perturb its Laplacian matrix $L_n$ to $\Lmod_n := L_n + e_1e_1^T$ and a path graph of $2n$ nodes, and perturb its Laplacian matrix $L_{2n}$ to $\Lmodd_{2n} := L_{2n} + e_1e_1^T+ e_{2n}e_{2n}^T$. Consider dependence of $\lambda_{\min}(\Lmod_n)$ on $n$.\\
	Then, the following hold. \vspace{-5mm}
	\begin{enumerate}
		\item $\lambda_{\min}(L_n+e_1e_1^T)=\lambda_{\min}(L_{2n}+e_1e_1^T+e_{2n}e_{2n}^T)=2-2\cos\biggl(\cfrac{\pi}{2n+1}\biggr)$.
		\item The smallest eigenvalue of $\Lmod_n$ (as well as of $\Lmodd_{2n}$) are monotonically strictly decreasing with respect to $n$. 
			\end{enumerate} 
\end{lemma}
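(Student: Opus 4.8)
The plan is to obtain both smallest eigenvalues in closed form by exploiting the tridiagonal structure that the perturbations leave behind. After perturbation, $\Lmod_n = L_n + e_1e_1^T$ is the $n\times n$ tridiagonal matrix with off-diagonal entries $-1$ and diagonal $(2,2,\dots,2,1)$: the left endpoint's degree is bumped from $1$ to $2$ while the right endpoint is untouched. By contrast $\Lmodd_{2n} = L_{2n} + e_1e_1^T + e_{2n}e_{2n}^T$ is exactly the $2n\times 2n$ tridiagonal Toeplitz matrix with all $2$'s on the diagonal and $-1$ off the diagonal. The eigenvalues of this latter matrix are classical: writing $\lambda = 2 - 2\cos\theta$, the interior three-term recurrence $v_{i-1} + v_{i+1} = 2\cos\theta\,v_i$ is solved by $v_i = \sin(i\theta)$, and the Dirichlet conditions $v_0 = v_{2n+1} = 0$ force $\theta = k\pi/(2n+1)$ for $k = 1,\dots,2n$. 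Hence $\lambda_{\min}(\Lmodd_{2n}) = 2 - 2\cos\bigl(\pi/(2n+1)\bigr)$, attained at $k=1$.

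For $\Lmod_n$ I would run the same sine ansatz and translate the endpoint rows into ghost-point conditions. The first row $2v_1 - v_2 = \lambda v_1$ is equivalent to the Dirichlet ghost condition $v_0 = 0$, while the last row $-v_{n-1} + v_n = \lambda v_n$ is equivalent to the Neumann-type ghost condition $v_{n+1} = v_n$. With $v_i = \sin(i\theta)$ the first condition holds automatically, and $\sin\bigl((n+1)\theta\bigr) = \sin(n\theta)$ reduces, via sum-to-product, to $\cos\bigl((2n+1)\theta/2\bigr) = 0$, i.e. $\theta = (2m+1)\pi/(2n+1)$ for $m = 0,\dots,n-1$. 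The smallest admissible angle is again $\theta = \pi/(2n+1)$, so $\lambda_{\min}(\Lmod_n) = 2 - 2\cos\bigl(\pi/(2n+1)\bigr)$, which establishes part~(1).

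An equivalent but more transparent route is a folding argument that I would present if space permits: $\Lmodd_{2n}$ is invariant under the reversal permutation, so its eigenvectors are symmetric or antisymmetric about the center. The symmetric family is precisely the odd-$k$ eigenvectors $v_i^{(k)} = \sin\bigl(ik\pi/(2n+1)\bigr)$, since $v_{2n+1-i}^{(k)} = (-1)^{k+1}v_i^{(k)}$; these satisfy $v_{n+1} = v_n$ at the center, and when restricted to the first $n$ coordinates they solve exactly the eigenequations of $\Lmod_n$ with the same eigenvalue. This exhibits the spectrum of $\Lmod_n$ as the odd-indexed subset $\{\,2 - 2\cos(k\pi/(2n+1)) : k \text{ odd}\,\}$ of that of $\Lmodd_{2n}$, so both minima occur at $k=1$ and coincide; the folding argument thus explains the equality structurally rather than by coincidence of two separate computations.

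Part~(2) then follows immediately from the closed form. Set $g(n) = 2 - 2\cos\bigl(\pi/(2n+1)\bigr)$ and regard $n$ as a real parameter $\geq 1$. The argument $\pi/(2n+1)$ is strictly decreasing in $n$ and lies in $(0,\pi/3]$, where $\cos$ is strictly decreasing; hence $\cos\bigl(\pi/(2n+1)\bigr)$ strictly increases and $g(n)$ strictly decreases. Equivalently, $g'(n) = -4\pi\sin\bigl(\pi/(2n+1)\bigr)/(2n+1)^2 < 0$. The main obstacle is entirely in part~(1), namely pinning down the spectrum of the mixed Dirichlet/Neumann matrix $\Lmod_n$ correctly: the danger is mistranslating the two endpoint rows into ghost-point conditions and thereby landing on the quantization $\theta = k\pi/(2n+1)$ instead of the odd multiples $\theta = (2m+1)\pi/(2n+1)$. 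Once the boundary bookkeeping is handled, everything else reduces to routine trigonometry and monotonicity.
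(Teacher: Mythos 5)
Your proof is correct and follows the same overall skeleton as the paper's: identify $\Lmod_n$ and $\Lmodd_{2n}$ as tridiagonal pseudo-Toeplitz and Toeplitz matrices respectively, obtain their spectra in closed form, observe that both minima equal $2-2\cos\bigl(\pi/(2n+1)\bigr)$, and then differentiate (your expression $-4\pi\sin\bigl(\pi/(2n+1)\bigr)/(2n+1)^2$ is exactly the paper's) for part~(2). The one difference is that the paper simply cites Kulkarni--Schmidt--Tsui for both eigenvalue formulas, whereas you derive them from scratch via the sine ansatz with the correct ghost-point translations ($v_0=0$ at the Dirichlet end, $v_{n+1}=v_n$ at the Neumann end, yielding the odd quantization $\theta=(2m+1)\pi/(2n+1)$); your bookkeeping there is right, and making the argument self-contained is a mild improvement. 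Your folding argument is a genuine addition not present in the paper: by showing that the symmetric (odd-$k$) eigenvectors of $\Lmodd_{2n}$ restrict to eigenvectors of $\Lmod_n$, you explain structurally why the two minima coincide (and in fact why the full spectrum of $\Lmod_n$ embeds in that of $\Lmodd_{2n}$), which is the mechanism the paper later exploits informally in its Observation about interlacing of the eigenvalues of $\Lmod_n$ and $\Lmodd_{2n}$.
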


\noindent See appendix for the proof. The following lemma provides an approximation of the smallest root of sum of two polynomials one of which is scaled by $\epsilon$. The smallest root is represented in powers of $\epsilon$. 
\begin{lemma} \label{lem:perturb:epsi:num:den:series}
	Let $a(s)$ and $b(s)$ be two finite-degree polynomials with $a(s) = a_0 + a_1s + a_2 s^2+ \cdots +s^n$ and $b(s) = b_0 + b_1s + b_2 s^2  + \cdots+ b_{n-1}s^{n-1}$.
	Suppose $a_0 = 0$,  $a_1 \ne 0$, and $b_0 \ne 0$.\\
	Define the polynomial $p_\epsilon(s): = a(s) + \epsilon b(s)$. \\
	Consider the root $\lambda_{min}(0)$ at the origin for $\epsilon = 0$ and
	the dependence of $\lambda_{min}$ on $\epsilon$:
	expand $\lambda_{min}(\epsilon)$ about $\epsilon=0$ in a series in $\epsilon$ as:
	$\lambda_{min}(\epsilon) = \beta_1 \epsilon + \beta_2 \epsilon^2 + \beta_3 \epsilon^3 + \cdots$.\\
	Then,
	\begin{equation} \label{eq:taylor:inverse}
		\beta_1 = \frac{-b_0}{a_1} \quad \mbox{  and  }  \quad  \beta_2 = \frac{a_1 b_1 b_0 -a_2 b^2_0}{a_1^3}.
	\end{equation}
\end{lemma}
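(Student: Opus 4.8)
The plan is to treat $\lambda_{\min}(\epsilon)$ as the branch of roots of $p_\epsilon(s)=0$ that emanates from the simple root at the origin, substitute its power-series ansatz directly into the defining equation, and then match coefficients of $\epsilon$ order by order.

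First I would justify that such an analytic branch exists and has no constant term. Since $a_0=0$ and $a_1\neq 0$, the origin is a \emph{simple} root of $a(s)=p_0(s)$, so $p_0'(0)=a_1\neq 0$. Viewing $p_\epsilon(s)$ as a function of the pair $(s,\epsilon)$ and noting $\partial_s p_\epsilon(s)\big|_{(0,0)}=a_1\neq 0$, the implicit function theorem yields a unique root $\lambda_{\min}(\epsilon)$ near $s=0$ that is analytic in $\epsilon$ with $\lambda_{\min}(0)=0$. This legitimizes the expansion $\lambda_{\min}(\epsilon)=\beta_1\epsilon+\beta_2\epsilon^2+\cdots$ with vanishing zeroth-order term.

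Next I would substitute $\lambda:=\lambda_{\min}(\epsilon)$ into $a(\lambda)+\epsilon\, b(\lambda)=0$. Using $a_0=0$ I would write $a(\lambda)=a_1\lambda+a_2\lambda^2+O(\lambda^3)$ and $b(\lambda)=b_0+b_1\lambda+O(\lambda^2)$. Because $\lambda=O(\epsilon)$, only the first two terms of $a$ and the first two of $b$ can contribute up to order $\epsilon^2$, all cubic-and-higher terms in $\lambda$ being $O(\epsilon^3)$. Collecting powers of $\epsilon$ then gives
\[
\underbrace{(a_1\beta_1+b_0)}_{\text{coeff.\ of }\epsilon}\,\epsilon
\;+\;\underbrace{(a_1\beta_2+a_2\beta_1^2+b_1\beta_1)}_{\text{coeff.\ of }\epsilon^2}\,\epsilon^2
\;+\;O(\epsilon^3)=0.
\]

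Finally I would set each coefficient to zero. The order-$\epsilon$ relation gives $\beta_1=-b_0/a_1$ at once. Feeding this into the order-$\epsilon^2$ relation and solving yields $\beta_2=-(a_2\beta_1^2+b_1\beta_1)/a_1$, and inserting $\beta_1=-b_0/a_1$ collapses this to $\beta_2=(a_1 b_1 b_0-a_2 b_0^2)/a_1^3$, exactly as stated in equation~\eqref{eq:taylor:inverse}. The calculation is essentially routine; the only points that genuinely demand care are the existence/analyticity of the branch (which hinges on $a_1\neq 0$, i.e.\ simplicity of the root, and is why $b_0\neq0$ then forces a nontrivial shift), and the bookkeeping ensuring that the quadratic coefficient $a_2$ enters $\beta_2$ through $\beta_1^2$ while no higher coefficients of $a$ contribute at this order.
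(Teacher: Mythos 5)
Your proof is correct, but it takes a genuinely different route from the paper's. You substitute the ansatz $\lambda_{\min}(\epsilon)=\beta_1\epsilon+\beta_2\epsilon^2+\cdots$ directly into $a(\lambda)+\epsilon b(\lambda)=0$ and match coefficients of $\epsilon$; the paper instead exploits the fact that the equation is \emph{linear} in $\epsilon$, solves explicitly for $\epsilon(s)=-a(s)/b(s)$, computes the Taylor coefficients $\alpha_1=-a_1/b_0$ and $\alpha_2=(a_1b_1-a_2b_0)/b_0^2$ of this forward map at $s=0$, and then invokes the standard series-reversion formulae $\beta_1=1/\alpha_1$, $\beta_2=-\alpha_2/\alpha_1^3$ to invert it. Your coefficient-matching is more self-contained (no appeal to the reversion formulae, which the paper cites from an external reference) and it is the version that extends without modification to the companion Lemma~4.7 with the additional $\epsilon^2 c(s)$ term, where the explicit-inversion trick no longer applies and the paper has to fall back on implicit differentiation anyway. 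You also supply the implicit-function-theorem justification for the existence and analyticity of the branch through the simple root at the origin, a point the paper leaves tacit. What the paper's route buys in exchange is that the forward coefficients $\alpha_1,\alpha_2$ are obtained by purely mechanical differentiation of a known rational function, with no need to track which terms of $a$ and $b$ can contribute at a given order in $\epsilon$. Both arguments arrive at the same expressions $\beta_1=-b_0/a_1$ and $\beta_2=(a_1b_1b_0-a_2b_0^2)/a_1^3$.
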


\noindent See appendix for the proof. The following lemma provides approximation of the smallest root of sum of three polynomials two of which are scaled by $\epsilon$ and $\epsilon^2$ respectively. The root is represented in powers of $\epsilon$. 
\begin{lemma} \label{lem:perturb:doubleepsi:num:den:series}
	Let polynomials $a(s)$,  $b(s)$ and $c(s)$ be such that $a(s) = a_0 + a_1s + a_2 s^2+ \cdots +s^n$, $b(s) = b_0 + b_1s + b_2 s^2  + \cdots+ b_{n-1}s^{n-1}$
  and $c(s) = c_0 + c_1s + c_2 s^2  + \cdots+ c_{n-2}s^{n-2}$.
	Suppose $a_0 = 0$,  $a_1 \ne 0$, $b_0 \ne 0$ and $c_0 \ne 0$.\\
	Define the polynomial $p_\epsilon(s): = a(s) + \epsilon b(s)+ \epsilon^2 c(s)$. \\
	Consider the root $\lambda_{min}(0)$ at the origin for $\epsilon = 0$ and
	the dependence of this root on $\epsilon$:
	expand $\lambda_{min}(\epsilon)$ about the origin in a series expansion:
	$\lambda_{min}(\epsilon) = \beta_1 \epsilon + \beta_2 \epsilon^2 + \beta_3 \epsilon^3 + \cdots$.\\
	Then,\vspace{-2mm}
	\begin{equation} \label{eq:taylor:doubleinverse}
		\beta_1 = \frac{-b_0}{a_1} \qquad \mbox{  and  }  \qquad  \beta_2 =\cfrac{(a_1b_1b_0-a_2b_0^2-a_1^2c_0)}{a_1^3}.
	\end{equation}	
\end{lemma}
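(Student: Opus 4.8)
The plan is to treat $\lambda_{\min}(\epsilon)$ as an implicitly defined branch of roots and to recover $\beta_1,\beta_2$ by matching powers of $\epsilon$ in the defining identity $p_\epsilon(\lambda_{\min}(\epsilon))=0$. First I would justify that such a smooth branch exists. Writing $F(s,\epsilon):=p_\epsilon(s)=a(s)+\epsilon b(s)+\epsilon^2 c(s)$, the hypotheses $a_0=0$ and $a_1\neq 0$ say precisely that $F(0,0)=a_0=0$ while $\partial F/\partial s\,(0,0)=a_1\neq 0$, so the origin is a \emph{simple} root of $p_0=a$. The implicit function theorem then furnishes a unique real-analytic function $\epsilon\mapsto\lambda_{\min}(\epsilon)$ with $\lambda_{\min}(0)=0$ satisfying $p_\epsilon(\lambda_{\min}(\epsilon))=0$ for all small $\epsilon$, which legitimizes the ansatz $\lambda_{\min}(\epsilon)=\beta_1\epsilon+\beta_2\epsilon^2+\beta_3\epsilon^3+\cdots$. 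This is the only step requiring any care, and it is entirely analogous to the setup of Lemma~\ref{lem:perturb:epsi:num:den:series}; the extra $\epsilon^2 c(s)$ term does not affect the existence/smoothness argument because it vanishes to second order at $(0,0)$.

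With the ansatz in hand, the remainder is a bookkeeping expansion. I would substitute $\lambda:=\lambda_{\min}(\epsilon)$ into $a(\lambda)+\epsilon b(\lambda)+\epsilon^2 c(\lambda)=0$ and Taylor-expand each polynomial about $s=0$, retaining terms through order $\epsilon^2$. Using $a(\lambda)=a_1\lambda+a_2\lambda^2+O(\lambda^3)$ (recall $a_0=0$), $b(\lambda)=b_0+b_1\lambda+O(\lambda^2)$ and $c(\lambda)=c_0+O(\lambda)$, and inserting $\lambda=\beta_1\epsilon+\beta_2\epsilon^2+O(\epsilon^3)$, the identity becomes a power series in $\epsilon$ whose every coefficient must vanish.

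Collecting the coefficient of $\epsilon^1$ gives $a_1\beta_1+b_0=0$, hence $\beta_1=-b_0/a_1$. Collecting the coefficient of $\epsilon^2$ gives $a_1\beta_2+a_2\beta_1^2+b_1\beta_1+c_0=0$; solving for $\beta_2$ and substituting $\beta_1=-b_0/a_1$ yields $\beta_2=(a_1 b_1 b_0-a_2 b_0^2-a_1^2 c_0)/a_1^3$, as claimed. I expect no genuine obstacle beyond careful bookkeeping: the new $c_0$ term enters only the $\epsilon^2$ balance, contributing the single $-a_1^2 c_0$ in the numerator, so the answer is exactly the $\beta_2$ of Lemma~\ref{lem:perturb:epsi:num:den:series} corrected by $-c_0/a_1$. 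If one prefers to bypass the implicit function theorem, the identical $\beta_1,\beta_2$ follow by undetermined-coefficient (Newton--Puiseux) matching applied directly to $p_\epsilon(\lambda)=0$, which reduces to the same two linear equations.
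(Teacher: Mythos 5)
Your proposal is correct: the coefficient matching at order $\epsilon^1$ gives $a_1\beta_1+b_0=0$ and at order $\epsilon^2$ gives $a_1\beta_2+a_2\beta_1^2+b_1\beta_1+c_0=0$, and solving these reproduces exactly the stated $\beta_1$ and $\beta_2$. However, your route differs from the paper's in its mechanics. The paper works with the \emph{inverse} function: it treats $\epsilon$ as a function of $s$ along the root locus, implicitly differentiates $a(s)+\epsilon b(s)+\epsilon^2 c(s)=0$ once and twice with respect to $s$, evaluates at $s=0$ to get $\alpha_1=\epsilon'(0)=-a_1/b_0$ and $\alpha_2=\tfrac{1}{2}\epsilon''(0)=(a_1b_1b_0-a_2b_0^2-a_1^2c_0)/b_0^3$, and then invokes the standard series-reversion formulas $\beta_1=1/\alpha_1$, $\beta_2=-\alpha_2/\alpha_1^3$ to pass from the expansion of $\epsilon(s)$ to that of $\lambda_{\min}(\epsilon)$. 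Your direct undetermined-coefficients substitution avoids the reversion step entirely and reduces the problem to two linear equations, which is arguably cleaner and less error-prone (the paper's second implicit differentiation is the messiest part of its argument). You also supply something the paper omits: an explicit justification, via the implicit function theorem at the simple root ($F(0,0)=a_0=0$, $\partial F/\partial s(0,0)=a_1\neq 0$), that an analytic branch $\lambda_{\min}(\epsilon)$ with $\lambda_{\min}(0)=0$ exists, so the power-series ansatz is legitimate. One small remark: the paper's inversion direction additionally leans on $b_0\neq 0$ to parametrize $\epsilon$ by $s$, whereas in your approach $b_0\neq 0$ is not needed for the existence of the expansion, only to make $\beta_1$ nonzero; this is a minor gain in transparency of hypotheses. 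Both arguments yield the same formulas, and your version stands on its own.
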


\indent See appendix for the proof. The proposition (from \cite{AlwanSaidiLaplacianCh}) below formulates the characteristic equation of $-L_n$, where $L_n$ is the  Laplacian matrix of the path graph. 
\begin{proposition} \label{prp:characteristic:path:explicit}
	\cite[p.~645]{AlwanSaidiLaplacianCh} Let $n \geqslant 2$ and consider the undirected, unweighted path graph $P_n$ on $n$ nodes and its Laplacian matrix $L_n$.\\
	The characteristic polynomial\footnote{We consider $-L_n$ instead of $L_n$ merely to circumvent the book-keeping of the sign-changes in the coefficients.} of $-L_n$ is:
	\begin{equation} \label{eq:path:graph:characteristic}
		\begin{split}
			\det (sI + L_n) = n s + \frac{n(n^2-1)}{6} s^2 + \frac{n(n^2-1)(n^2-4)}{120} s^3 +
			 \cdots + 2(n-1) s^{n-1} + s^n.
	\end{split}
	\end{equation}
\end{proposition}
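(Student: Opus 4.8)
The plan is to reduce the claim to a single, clean closed form for the characteristic polynomial and then read off the coefficients. Writing $M_n(s) := sI + L_n$, this is a symmetric tridiagonal matrix whose diagonal is $(s+1, s+2, \dots, s+2, s+1)$ (the two endpoints have degree $1$) and whose off-diagonal entries are $-1$. First I would introduce the auxiliary Chebyshev-type family $q_k(s) := \det T_k(s)$, where $T_k$ is the $k\times k$ tridiagonal matrix with every diagonal entry equal to $s+2$ and off-diagonal entries $-1$; cofactor expansion along the last row gives the three-term recurrence $q_k = (s+2)q_{k-1} - q_{k-2}$, with $q_0 = 1$ and $q_1 = s+2$, so that under the substitution $s+2 = 2x$ one has $q_k(s) = U_k(x)$, the Chebyshev polynomial of the second kind. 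Expanding $\det M_n$ along its first row and then along the first column of the resulting minor handles the degree-$1$ corner at node $1$, and repeating at the other end reduces $\det M_n$ to a fixed linear combination of $q_{n-2}, q_{n-3}, q_{n-4}$.

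Carrying out this bookkeeping, I expect the corner corrections to collapse (after using the recurrence for $q_k$) to the remarkably simple identity
\[
\det(sI + L_n) = s\,U_{n-1}\!\left(1 + \tfrac{s}{2}\right).
\]
A quick independent check of this formula is available from Proposition~\ref{prop:pathEigvalEigvec}: the roots of $\det(sI+L_n)$ are exactly $s = -\lambda_j = 2\cos(\pi(j-1)/n) - 2$, i.e. $x = 1 + s/2 = \cos(\pi(j-1)/n)$ for $j = 1, \dots, n$, which are precisely the zero $x=1$ together with the $n-1$ zeros of $U_{n-1}$; matching leading coefficients then fixes the prefactor $s$. With the closed form in hand, the whole proposition becomes the Taylor expansion of $U_{n-1}$ about $x=1$, which can be proved by induction on $n$ from the Chebyshev recurrence together with Pascal's identity and yields
\[
\det(sI + L_n) = \sum_{j=1}^{n} \binom{n+j-1}{2j-1}\, s^{j}.
\]
The stated coefficients are then special cases: $\binom{n}{1}=n$ for $s$, $\binom{n+1}{3} = n(n^2-1)/6$ for $s^2$, $\binom{n+2}{5} = n(n^2-1)(n^2-4)/120$ for $s^3$, and $\binom{2n-2}{2n-3} = 2(n-1)$ and $\binom{2n-1}{2n-1}=1$ for $s^{n-1}$ and $s^n$.

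An alternative route, which avoids the determinant recurrence altogether, is to work directly with the factorization $\det(sI+L_n) = \prod_{j=1}^n (s + \lambda_j)$, so that the coefficient of $s^{n-m}$ is the elementary symmetric polynomial $e_m(\lambda_1, \dots, \lambda_n)$. The top coefficients are immediate: $e_0 = 1$ and $e_1 = \operatorname{tr} L_n = 2(n-1)$. Because $\lambda_1 = 0$, every low-order coefficient simplifies, the coefficient of $s$ being $e_{n-1} = \prod_{j\geqslant 2}\lambda_j$, which equals $n$ by the Matrix--Tree theorem (the path has a unique spanning tree); more generally the coefficient of $s^{r+1}$ is $\big(\prod_{j\geqslant 2}\lambda_j\big)\, e_r(1/\lambda_2, \dots, 1/\lambda_n)$. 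The main obstacle in either route is the same analytic core: evaluating the trigonometric power sums $\sum_{j}\csc^2(\cdot)$ and $\sum_j \csc^4(\cdot)$ (equivalently, the low-order Taylor coefficients of $U_{n-1}$ at $x=1$) and checking that they collapse to the clean polynomial products $n(n^2-1)/6$ and $n(n^2-1)(n^2-4)/120$; I would handle these with the standard cosecant-square summation identity $\sum_{m=1}^{N-1}\csc^2(\pi m/N) = (N^2-1)/3$ and its fourth-power analogue.
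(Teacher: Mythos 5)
Your proposal is correct, but note that the paper does not actually prove this proposition at all: it is imported verbatim from the cited reference \cite{AlwanSaidiLaplacianCh}, so any self-contained derivation is already ``a different route.'' Your argument is sound. The closed form $\det(sI+L_n)=s\,U_{n-1}\!\left(1+\tfrac{s}{2}\right)$ is correct, and in fact the verification you describe as a ``quick independent check'' is itself a complete proof: by Proposition~\ref{prop:pathEigvalEigvec} the $n$ eigenvalues are distinct, both sides are monic of degree $n$, and they share the same $n$ distinct roots (with $U_{n-1}(1)=n\neq 0$ guaranteeing that $x=1$ contributes only through the explicit factor $s$), so the tridiagonal cofactor bookkeeping with $q_{n-2},q_{n-3},q_{n-4}$ can be dispensed with entirely. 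The expansion $\det(sI+L_n)=\sum_{j=1}^{n}\binom{n+j-1}{2j-1}s^{j}$ checks out: the induction via $U_{m}=(2x)U_{m-1}-U_{m-2}$ reduces to $\binom{n+j-1}{2j-1}-\binom{n+j-2}{2j-1}+\binom{n+j-2}{2j-3}=\binom{n+j-1}{2j-2}$, which is two applications of Pascal's identity, and the five displayed coefficients of the proposition are the claimed specializations (e.g.\ $\binom{n+2}{5}=n(n^2-1)(n^2-4)/120$). What your route buys, beyond independence from the external reference, is the full coefficient list rather than only the terms displayed in equation~\eqref{eq:path:graph:characteristic}, plus a structural explanation (Chebyshev shift) of why the coefficients are products of consecutive integers. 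Your alternative route via elementary symmetric functions and the Matrix--Tree theorem is also viable, but be aware that the cosecant sums there run over the half range $k=1,\dots,n-1$ of $\csc^2(k\pi/(2n))$, so one must fold the standard full-range identity $\sum_{m=1}^{N-1}\csc^2(\pi m/N)=(N^2-1)/3$ using the symmetry about $m=N/2$ before the values $(n^2-1)/6$ and its fourth-power analogue drop out; this is routine but is the one place where a sign or factor of two could slip.
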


\noindent The proposition below provides a recursive characteristic polynomial relation for  generic tridiagonal matrix. 

\begin{proposition} \label{prop:characteristic:Tri}
	\cite[eq(9)]{KulkarniEtal} Let $Q\in \mathbb{R}^{m \times m}$ be a tridiagonal matrix of the following form : \\
	$Q_m=\begin{bmatrix}
		a_1 & c_2 & 0 & 0&\dots & 0\\
		b_2 & a_2 & c_3 & 0 &  \dots & 0\\
		0 & b_3 & a_3 & c_4 & 0\dots & 0\\
		\vdots & \vdots & \ddots & \ddots & \ddots &\vdots \\
		0 & \dots & 0 & b_{m-1} & a_{m-1} & c_{m} \\
		0 & \dots & \dots & 0 & b_m & a_m \\
	\end{bmatrix}$ where $a_i,~b_i,~c_i \in \mathbb{R}$\\ \vspace{2mm}
	\noindent Let $\psi_{m}(s)$ be the characteristic polynomial of $Q_m$ as above. Define $\psi_{0}=1,~\psi_{1}=(s-a_1)$. 
	\vspace{-2mm}
	\begin{equation} \label{eq:tridiagonal:characteristic}		
		\hspace*{-6mm}\mbox{Then, for } m\geqslant 2 \qquad \qquad \psi_{m}= (s-a_m)\psi_{m-1}-b_mc_m\psi_{m-2}.	\hspace*{5.5cm} 	
	\end{equation}
\end{proposition}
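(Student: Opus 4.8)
The plan is to prove the recurrence by a Laplace (cofactor) expansion of the determinant $\psi_m(s) = \det(sI_m - Q_m)$ along its last row, exploiting the tridiagonal sparsity so that only two entries in that row are nonzero. First I would pin down the sign convention: since the stated base cases are $\psi_0 = 1$ and $\psi_1 = s - a_1$, the characteristic polynomial here is $\psi_m(s) = \det(sI_m - Q_m)$. The matrix $sI_m - Q_m$ is again tridiagonal, with diagonal entries $s - a_i$, superdiagonal entries $-c_{i+1}$ in positions $(i,i+1)$, and subdiagonal entries $-b_{i+1}$ in positions $(i+1,i)$.

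Next I would expand along row $m$, whose only nonzero entries are the diagonal $s - a_m$ at column $m$ and the subdiagonal $-b_m$ at column $m-1$. This gives $\psi_m = (s-a_m)\,C_{mm} + (-b_m)\,C_{m,m-1}$ with cofactors $C_{mj} = (-1)^{m+j} M_{mj}$. The first cofactor is immediate: deleting row $m$ and column $m$ leaves exactly $sI_{m-1} - Q_{m-1}$, so $C_{mm} = M_{mm} = \psi_{m-1}$, contributing the term $(s-a_m)\psi_{m-1}$.

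The only delicate step is the second cofactor $C_{m,m-1}$, where the nested sign-bookkeeping must be handled carefully. Deleting row $m$ and column $m-1$ produces an $(m-1)\times(m-1)$ matrix whose last column (inherited from the original column $m$) has a single nonzero entry $-c_m$ sitting in its last row. A second expansion along that column peels off this entry, with sign $(-1)^{2(m-1)}=1$, and leaves $sI_{m-2} - Q_{m-2}$, so $M_{m,m-1} = -c_m\,\psi_{m-2}$. Combining with the cofactor sign $(-1)^{m+(m-1)} = -1$ yields $C_{m,m-1} = c_m\,\psi_{m-2}$, so the subdiagonal term contributes $(-b_m)(c_m\,\psi_{m-2}) = -b_m c_m\,\psi_{m-2}$.

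Assembling the two contributions gives $\psi_m = (s-a_m)\psi_{m-1} - b_m c_m\,\psi_{m-2}$, the claimed recurrence, valid for all $m \geqslant 2$ once the base cases $\psi_0 = 1$ (empty determinant) and $\psi_1 = s - a_1$ are checked directly. I expect no real obstacle beyond tracking the two nested cofactor signs; the tridiagonal structure collapses each expansion to a single relevant minor, so the computation is short once the convention is fixed.
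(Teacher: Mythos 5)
Your proof is correct. The paper does not prove this proposition itself---it is quoted directly from the cited reference [KulkarniEtal]---and your cofactor expansion along the last row, with the careful two-stage sign tracking for the $(m,m-1)$ cofactor, is exactly the standard derivation of this three-term recurrence; every sign and minor in your argument checks out.
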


\noindent The proposition below is an adaptation of the tridiagonal matrix characteristic polynomial (Proposition~\ref{prop:characteristic:Tri}) to path graph for a single rank perturbation of the Laplacian matrix.
\begin{proposition} \label{prp:characteristic:path:perturb}
	Let $n \geqslant 2$ and consider the undirected, unweighted path graph $P_n$ on $n$ nodes and its Laplacian $L_n$. Define $\Lmod_n := L_n + \epsilon e_p e_p^T $ for some $p\in \{ 1,2, \dots, n\}$. Let $\psi_{m}(s)$ be characteristic polynomial of a tridiagonal matrix of size $m\in \mathbb{Z}$ (refer Proposition~\ref{prop:characteristic:Tri}). \\
	The characteristic polynomial of $\Lmod_{n}$ is \vspace{-2mm}
	\begin{equation} \label{eq:path:characteristic:perturb}
			\det(sI -\Lmod_n) = (s-2-\epsilon)\psi_{p-1}\psi_{n-p}-\psi_{p-2}\psi_{n-p}-\psi_{p-1}\psi_{n-p-1}	
			\vspace{-2mm}
	\end{equation} 
	where $\psi_{p}=(s-2)\psi_{p-1}-\psi_{p-2}$, for $2<p\leqslant n$ with $\psi_{0}=1$, $\psi_{1}=(s-1)$ and $\psi_{2}=(s-1)(s-2)$.
\end{proposition}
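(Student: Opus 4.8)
The plan is to recognize $sI-\Lmod_n$ as a tridiagonal matrix and compute its determinant by \emph{isolating} the perturbed node $p$, reducing everything to characteristic polynomials of the two sub-paths that remain once node $p$ is deleted. First I would fix the meaning of $\psi_m$: it is the characteristic polynomial of the $m\times m$ leading principal submatrix of $L_n$, i.e.\ the tridiagonal matrix with diagonal $(1,2,\dots,2)$ and off-diagonal $-1$. Applying Proposition~\ref{prop:characteristic:Tri} with $a_1=1$, $a_i=2$ for $i\geq 2$ and $b_ic_i=1$ yields precisely the stated recursion $\psi_m=(s-2)\psi_{m-1}-\psi_{m-2}$ for $m\geq 2$ with $\psi_0=1$, $\psi_1=s-1$. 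A fact I would record up front is that, by the reversal symmetry of the path, the \emph{trailing} $m\times m$ block (nodes $n-m+1,\dots,n$, diagonal $(2,\dots,2,1)$) is similar to the leading block via the order-reversing permutation, and hence also has characteristic polynomial $\psi_m$; this is what lets both sub-paths be described by the same polynomial family.

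Next, assume $2\leq p\leq n-1$, so the perturbed diagonal entry is $s-2-\epsilon$ (the two endpoint cases are handled analogously with $s-1-\epsilon$). Writing $M:=sI-\Lmod_n$, I would permute node $p$ to the last coordinate. Because deleting node $p$ disconnects $P_n$ into the sub-path on $\{1,\dots,p-1\}$ and the sub-path on $\{p+1,\dots,n\}$, the principal submatrix obtained by removing row/column $p$ is block diagonal, $D=\operatorname{diag}(A,B)$, where $A$ is the leading $(p-1)$-block and $B$ the trailing $(n-p)$-block. Thus the permuted matrix has the bordered form $\begin{pmatrix} D & g \\ g^T & s-2-\epsilon\end{pmatrix}$, where the coupling vector $g$ has exactly two nonzero entries, both equal to $1$, at the node adjacent to $p$ inside each block (node $p-1$ in $A$ and node $p+1$ in $B$).

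The central computation is then the bordered-determinant identity
\[
\det M=(s-2-\epsilon)\det D-g^T\operatorname{adj}(D)\,g,
\]
which I would justify as a polynomial identity in $s$ (it agrees with the Schur-complement formula wherever $\det D\neq 0$, hence everywhere). Here $\det D=\det A\,\det B=\psi_{p-1}\psi_{n-p}$. Since $D$ is block diagonal its adjugate is block diagonal, so the cross term in the quadratic form vanishes and $g^T\operatorname{adj}(D)g$ collapses to the two diagonal cofactors at nodes $p-1$ and $p+1$. Deleting node $p-1$ from $A$ leaves the leading $(p-2)$-block (cofactor $\psi_{p-2}$), multiplied by $\det B=\psi_{n-p}$; deleting node $p+1$ from $B$ leaves the trailing $(n-p-1)$-block (cofactor $\psi_{n-p-1}$), multiplied by $\det A=\psi_{p-1}$. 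Substituting gives
\[
\det M=(s-2-\epsilon)\psi_{p-1}\psi_{n-p}-\psi_{p-2}\psi_{n-p}-\psi_{p-1}\psi_{n-p-1},
\]
which is exactly \eqref{eq:path:characteristic:perturb}.

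I do not expect a genuine obstacle: the argument is bookkeeping once the disconnection structure is seen. The points needing care are (i) the reversal-symmetry identification of trailing blocks with $\psi_m$; (ii) the correct indexing of the two surviving cofactors, avoiding off-by-one errors between $\psi_{p-2}/\psi_{p-1}$ and $\psi_{n-p-1}/\psi_{n-p}$; and (iii) the degenerate endpoint cases $p=1,n$, where one block is empty ($\psi_0=1$), the vector $g$ has a single nonzero entry, and the diagonal entry is $s-1-\epsilon$, so the $\psi_{p-2}$ term is simply absent. Using the adjugate form of the identity rather than the literal Schur complement avoids any spurious invertibility hypothesis on $D$.
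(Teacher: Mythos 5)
Your argument is correct and is essentially the explicit version of what the paper intends: Proposition~\ref{prp:characteristic:path:perturb} is stated there without proof as a direct adaptation of the tridiagonal recursion in Proposition~\ref{prop:characteristic:Tri}, and your bordered-determinant expansion about node $p$ (equivalently, Laplace expansion along row $p$ of the tridiagonal matrix $sI-\Lmod_n$) is exactly that adaptation, with the reversal-symmetry identification of the trailing block with $\psi_m$ correctly supplying the second factor. Your flag about the endpoint cases $p\in\{1,n\}$ is well taken, since the stated formula literally invokes $\psi_{p-2}=\psi_{-1}$ there; one can either treat those cases separately as you do, or adopt the convention $\psi_{-1}=-1$ (forced by extending the recursion backwards), under which the displayed identity holds verbatim for all $p$.
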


\indent The following proposition is an adaptation of the tridiagonal matrix characteristic polynomial (Proposition~\ref{prop:characteristic:Tri}) to the path graph Laplacian with a two rank perturbation. 
\begin{proposition} \label{prp:characteristic:path:perturb:2}
	Let $n \geqslant 6$ and consider the undirected, unweighted path graph $P_n$ on $n$ nodes and its Laplacian $L_n$. Define $\Lmodd_n := L_n + \epsilon e_{p_1} e_{p_1}^T +\epsilon e_{p_2} e_{p_2}^T$ for $p_1, ~ p_2~\in \{ 1,2, \dots, n\}$ with $1 \leqslant p_1 < n/2 <  p_2 \leqslant n$. Let $\psi_{m}(s)$ be the characteristic polynomial of the tridiagonal matrix of size $m\in \mathbb{Z}$ (refer Proposition~\ref{prop:characteristic:Tri}).\\
	The characteristic polynomial of $\Lmodd_{n}$ is \vspace{-2mm}
	\begin{equation} \label{eq:path:graph:characteristic}
		\begin{split}
			\det(sI -\Lmodd_n) &= (s-2-\epsilon)^2 \psi_{p_1-1}\psi_{p_2-p_1-1}\psi_{n-p_2}\\
			&-(s-2-\epsilon)\Bigl\{\psi_{p_1-1}\psi_{p_2-p_1-1}\psi_{n-p_2-1}+\psi_{p_1-1}\psi_{p_2-p_1-2}\psi_{n-p_2}+\psi_{p_1-2}\psi_{p_2-p_1-1}\psi_{n-p_2}\\
			&+\psi_{p_1-1}\psi_{p_2-p_1-2}\psi_{n-p_2}\Bigr\}\\
			&+\Bigl\{\psi_{p_1-2}\psi_{p_2-p_1-1}\psi_{n-p_2-1}+\psi_{p_1-1}\psi_{p_2-p_1-3}\psi_{n-p_2}+\psi_{p_1-1}\psi_{p_2-p_1-2}\psi_{n-p_2-1}\Bigr\}\\
		\end{split}
		\vspace{-2mm}		
	\end{equation}
	where $\psi_{p}= (s-2)\psi_{p-1}-\psi_{p-2}, \quad \mbox{for } 2<p\leqslant n~~\mbox{with}
	~\psi_{0}=1,~~ \psi_{1}=(s-1),~~ \psi_{2}=(s-1)(s-2)$.
\end{proposition}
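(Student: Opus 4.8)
The plan is to use that $\Lmodd_n$, like $L_n$, is tridiagonal: the two rank-one terms $\epsilon e_{p_1}e_{p_1}^T$ and $\epsilon e_{p_2}e_{p_2}^T$ perturb only the diagonal entries at positions $p_1$ and $p_2$. Hence $sI-\Lmodd_n$ is tridiagonal with all off-diagonal products equal to $1$, and its determinant is governed by the continuant recursion of Proposition~\ref{prop:characteristic:Tri}. The strategy is a \emph{nested} cofactor expansion that first isolates the perturbation at $p_1$ and then the one at $p_2$, thereby reducing the two-perturbation determinant to the single-perturbation identity already established in Proposition~\ref{prp:characteristic:path:perturb}.

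Concretely, I would first expand $\det(sI-\Lmodd_n)$ along the row/column indexed by $p_1$, exactly as in the derivation of Proposition~\ref{prp:characteristic:path:perturb}. Since $1\le p_1<n/2$, this index is interior, so its perturbed diagonal entry is $(s-2-\epsilon)$, and the expansion yields
\[
\det(sI-\Lmodd_n)=(s-2-\epsilon)\,\psi_{p_1-1}\,g_{p_1+1}-\psi_{p_1-2}\,g_{p_1+1}-\psi_{p_1-1}\,g_{p_1+2},
\]
where $\psi_{p_1-1},\psi_{p_1-2}$ are the characteristic polynomials of the leading segment $[1,p_1-1]$ and $g_{p_1+1},g_{p_1+2}$ denote the characteristic polynomials of the trailing segments $[p_1+1,n]$ and $[p_1+2,n]$. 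The key observation is that each trailing segment still carries the single perturbation at $p_2$, so Proposition~\ref{prp:characteristic:path:perturb} applies to it verbatim: expanding $g_{p_1+1}$ and $g_{p_1+2}$ about $p_2$ splits them into the middle segment $[p_1+1,p_2-1]$ (of size $p_2-p_1-1$, all interior entries $(s-2)$) and the final segment $[p_2+1,n]$ (of size $n-p_2$, whose characteristic polynomial is $\psi_{n-p_2}$ by the reflection symmetry of the path).

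Substituting these two expansions and collecting terms by powers of $(s-2-\epsilon)$ then produces the three groups in the claimed formula: the $(s-2-\epsilon)^2$ term $\psi_{p_1-1}\psi_{p_2-p_1-1}\psi_{n-p_2}$, the first-order bracket, and the zeroth-order bracket, where the shifted indices $p_2-p_1-2$, $p_2-p_1-3$, and $n-p_2-1$ arise from dropping an endpoint of a segment during the second expansion. The hypotheses $n\ge 6$ and $1\le p_1<n/2<p_2\le n$ are exactly what guarantee that all three segments are well-defined and that the smallest index appearing, $p_2-p_1-3$, is nonnegative, with the degenerate small-segment cases handled by the conventions $\psi_0=1$ and $\psi_1=(s-1)$ together with the interior recursion $\psi_m=(s-2)\psi_{m-1}-\psi_{m-2}$.

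The main obstacle I anticipate is purely the bookkeeping of this double expansion: one must track the sign contributions of the $(-1)$-weighted off-diagonal products through both expansions and group the resulting nine products correctly into the three coefficient brackets, while being careful that the middle segment uses the interior-entry recursion (diagonal $(s-2)$) rather than the endpoint initialization $\psi_1=(s-1)$. Verifying the boundary and degenerate cases (segments of size $0$ or $1$, and the reduction of $g_{p_1+1}$ when $p_2=p_1+1$ or $p_2=n$) against the stated conventions is the only delicate part; the algebra itself is routine once the three-segment decomposition is in place.
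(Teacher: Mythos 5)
Your overall strategy --- a nested continuant expansion, first at $p_1$ and then at $p_2$ inside each trailing block, reducing to the single-perturbation identity of Proposition~\ref{prp:characteristic:path:perturb} --- is exactly the route the paper intends (it supplies no written proof, describing the result only as an ``adaptation'' of Proposition~\ref{prop:characteristic:Tri}), and your three-segment decomposition with the reflection argument for $\psi_{n-p_2}$ is sound. The gap is in your final step, where you assert that collecting terms ``produces the three groups in the claimed formula'': if you carry out the bookkeeping that you yourself flag as the delicate part, it does not land on the printed identity. First, the middle block $[p_1+1,p_2-1]$ contains no endpoint of the path, so its continuant obeys the all-$(s-2)$ recursion with initialization $1$ and $(s-2)$; it is symmetric under reversal (which is precisely why one product appears twice in the first-order bracket) and is \emph{not} the $\psi_m$ of the statement, whose initialization $\psi_1=(s-1)$ encodes a degree-one endpoint. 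Second, the zeroth-order bracket produced by the double expansion has \emph{four} products --- each of the two non-diagonal cofactors at $p_1$ paired with each of the two at $p_2$ --- and the product $\psi_{p_1-2}\cdot(\mbox{middle})_{p_2-p_1-2}\cdot\psi_{n-p_2}$ is missing from the statement. Third, the initialization $\psi_2=(s-1)(s-2)$ is inconsistent with the recursion, which gives $\psi_2=(s-2)\psi_1-\psi_0=(s-1)(s-2)-1$, the actual determinant of the leading $2\times 2$ block of $sI-L_n$.

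A concrete check: take $n=8$, $p_1=2$, $p_2=5$ and evaluate at $s=0$, $\epsilon=1$, so the diagonal of $sI-\Lmodd_8$ is $(-1,-3,-2,-2,-3,-2,-2,-1)$ with off-diagonal entries $+1$. The three-term recursion gives $\det(sI-\Lmodd_8)=5$, the printed formula evaluates to $15$, and the corrected expansion (all-$(s-2)$ continuants for the middle block, the fourth zeroth-order product restored, and $\psi_2=(s-1)(s-2)-1$) returns $5$; the case $n=6$, $p_1=2$, $p_2=4$ behaves the same way. So your plan is the right one and will succeed, but it proves a corrected identity rather than the one printed; you should either amend the target statement or explain the discrepancy, rather than asserting that the stated groups emerge from the expansion.
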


\indent The following lemma obtains the characteristic equation of the perturbed Laplacian of the path graph after perturbation at any one of the diagonal location of the matrix with coefficient of $s$ in the perturbation.
\begin{lemma} \label{lem:perturb:epsi:num:den}
	Consider an odd integer $n \geqslant 3$ and let $L_n$ be the Laplacian matrix of the path graph $P_n$. Define $p^* := (n+1)/2$ and $\Lmod_n := L_n + \epsilon e_j e_j^T $ for $j\in \{ 1,2, \dots, n\}$.\\
    Also let $a(s) := \det(sI-L_n)$ and 
    $b_j(s) := -1 + s(\frac{(n-1)^2 + 2(n-1)}{4} + (j-p^*)^2 )$
	+ (terms in $s^2$ $\&$ higher).
	
	\noindent Then, the characteristic polynomial of $\Lmod_n$ is \vspace{-2mm}
	\begin{equation} \label{eq:path:graph:characteristic:perturbed}
		\det (sI - L_n +\epsilon e_j e_j^T ) = a(s) + \epsilon b_j(s).
	\end{equation}
\end{lemma}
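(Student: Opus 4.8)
The plan is to read the characteristic polynomial straight off Proposition~\ref{prp:characteristic:path:perturb} and then extract the two lowest-order coefficients in $s$ of the factor multiplying $\epsilon$. Setting $p=j$ there and noting that $\epsilon$ enters only through the factor $(s-2-\epsilon)$, the characteristic polynomial is \emph{exactly affine} in $\epsilon$:
\[
\det(sI-\Lmod_n)=\underbrace{\bigl[(s-2)\psi_{j-1}\psi_{n-j}-\psi_{j-2}\psi_{n-j}-\psi_{j-1}\psi_{n-j-1}\bigr]}_{=\,a(s)}\;-\;\epsilon\,\psi_{j-1}\psi_{n-j}.
\]
The bracketed term is the $\epsilon=0$ specialization, i.e. $\det(sI-L_n)=a(s)$, so the coefficient of $\epsilon$ is $b_j(s)=-\psi_{j-1}(s)\psi_{n-j}(s)$, with no higher powers of $\epsilon$; this already gives the claimed form $a(s)+\epsilon b_j(s)$. (Equivalently, the matrix-determinant lemma yields the same coefficient, since $\psi_{j-1}\psi_{n-j}$ is the $(j,j)$ minor of $sI-L_n$: deleting row and column $j$ of a tridiagonal matrix splits it into its leading $(j-1)\times(j-1)$ and trailing $(n-j)\times(n-j)$ blocks, whose determinants are $\psi_{j-1}$ and, by the reflection symmetry of the path, $\psi_{n-j}$.)

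It remains to compute the constant term and the coefficient of $s$ in $b_j(s)=-\psi_{j-1}\psi_{n-j}$. For this I would evaluate $\psi_m$ and its $s$-derivative at $s=0$ using the recursion $\psi_m=(s-2)\psi_{m-1}-\psi_{m-2}$ with $\psi_0=1$, $\psi_1=s-1$ from Proposition~\ref{prop:characteristic:Tri}. Substituting $s=0$ into the recursion and into its derivative, a short induction gives the closed forms
\[
\psi_m(0)=(-1)^m,\qquad \psi_m'(0)=(-1)^{m-1}\,\tfrac{m(m+1)}{2},
\]
the second being the signed triangular numbers $0,1,-3,6,-10,\dots$.

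With $a:=j-1$ and $c:=n-j$ (so $a+c=n-1$, and since $n$ is odd $(-1)^{a+c}=1$), the product rule gives $b_j(0)=-\psi_a(0)\psi_c(0)=-(-1)^{a+c}=-1$, and
\[
b_j'(0)=-\bigl[\psi_a'(0)\psi_c(0)+\psi_a(0)\psi_c'(0)\bigr]=\tfrac12\bigl[a(a+1)+c(c+1)\bigr]=\tfrac12\bigl[(j-1)j+(n-j)(n-j+1)\bigr].
\]
Expanding the right-hand side and completing the square in $j$ about $p^*=(n+1)/2$ turns this into $\tfrac{n^2-1}{4}+(j-p^*)^2=\tfrac{(n-1)^2+2(n-1)}{4}+(j-p^*)^2$, exactly the stated coefficient of $s$.

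The only genuinely delicate part is the sign bookkeeping: the constant term $b_j(0)=-1$ hinges on $(-1)^{n-1}=1$, i.e. on $n$ being odd, and in $b_j'(0)$ the two cross terms carry signs depending on the parities of $a$ and $c$ separately before they combine favorably. Establishing the two closed forms for $\psi_m(0)$ and $\psi_m'(0)$ is the main technical step; once these are in hand, identifying $b_j(0)$ and verifying the completion-of-the-square identity are routine.
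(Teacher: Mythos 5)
Your proof is correct and follows exactly the route the paper indicates for this lemma (the paper explicitly skips the proof, describing it as bookkeeping from Proposition~\ref{prp:characteristic:path:perturb}); your closed forms $\psi_m(0)=(-1)^m$ and $\psi_m'(0)=(-1)^{m-1}\tfrac{m(m+1)}{2}$ organize that bookkeeping cleanly, and the sign analysis correctly isolates where oddness of $n$ is used. One small remark: you (correctly) work with $\psi_2=(s-2)(s-1)-1$ as forced by the recursion and by the actual $2\times 2$ block, whereas Proposition~\ref{prp:characteristic:path:perturb} states $\psi_2=(s-1)(s-2)$, which is inconsistent with its own recursion and appears to be a typo in the paper.
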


\indent The proof of Lemma~\ref{lem:perturb:epsi:num:den} follows by a careful and straight-forward application of Proposition~\ref{prp:characteristic:path:perturb}, in which the tridiagonal structure is utilized and the terms corresponding to $\epsilon^0$ and $\epsilon^1$ are systematically collected (for degree in $s\leqslant2$). Since the proof is primarily book-keeping along above lines, we skip the proof.

The following lemma obtains the characteristic equation of the perturbed Laplacian of the path graph after perturbation at any two of the diagonal locations of the matrix. 
\begin{lemma} \label{lem:perturb:epsi:num:den:two}
	Consider an even integer $n \geqslant 6$ such that $n/2$ is odd
	and let $L_n$ be the Laplacian matrix for the path graph $P_n$.\\
	Let $j_1$ and $j_2$ satisfy $1 \leqslant j_1 < n/2 <  j_2 \leqslant n$ and define
	$p_1^* := \cfrac{n+2}{4}$ and $p_2^* := \cfrac{3n+2}{4}$.
	
    \noindent Define $a(s) := \det(sI-L_n)$,
    
    \noindent     $b_{j_1,j_2}(s) := 2 - (\frac{3n^2-4}{8} -\frac{n(j_1-j_2)}{2} + (j_1-p^*_1)^2 + (j_2-p^*_2)^2 )  s 
	+ \mbox{(terms in $s^2$ $\&$ higher)}$.
	
	\noindent Further, let
	$c_{j_1,j_2}(s) := (j_2-j_1) + \mbox{(terms in $s$ $\&$ higher)}$.
	
    \noindent Consider the perturbed Laplacian matrix 
	$\Lmodd_n := L_n + \epsilon e_{j_1} e_{j_1}^T + \epsilon e_{j_2} e_{j_2}^T $.\\
	Then, the characteristic polynomial of $\Lmodd_n$ is \vspace{-2mm}
	\begin{equation} \label{eq:path:graph:characteristic:2pert}
		\det(sI - \Lmodd_n) = a(s) + \epsilon b_{j_1,j_2}(s) + \epsilon^2 c_{j_1,j_2}(s).
	\end{equation}
	\end{lemma}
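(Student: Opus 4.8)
The plan is to derive the claim directly from Proposition~\ref{prp:characteristic:path:perturb:2}, applied with its perturbation indices $p_1,p_2$ taken equal to $j_1,j_2$ (legitimate since $1\leqslant j_1<n/2<j_2\leqslant n$), and then to reorganise the resulting expression as a polynomial in $\epsilon$. In that formula $\epsilon$ enters \emph{only} through the two factors $(s-2-\epsilon)$, which record the perturbed diagonal entries at positions $j_1$ and $j_2$; every factor $\psi_m$ is the characteristic polynomial of an \emph{unperturbed} tridiagonal block and is thus free of $\epsilon$. Consequently $\det(sI-\Lmodd_n)$ is genuinely a quadratic in $\epsilon$. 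First I would substitute $(s-2-\epsilon)^2=(s-2)^2-2(s-2)\epsilon+\epsilon^2$ and $(s-2-\epsilon)=(s-2)-\epsilon$ and collect powers of $\epsilon$; this produces a decomposition $\det(sI-\Lmodd_n)=a(s)+\epsilon\,b_{j_1,j_2}(s)+\epsilon^2 c_{j_1,j_2}(s)$ in which each coefficient is an explicit finite sum of triple products of the $\psi$'s. This is the two-port counterpart of the bookkeeping already used for the one-port Lemma~\ref{lem:perturb:epsi:num:den}; the only structurally new feature is the $\epsilon^2$ term, which the rank-two perturbation forces.

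The three pieces are then read off. Setting $\epsilon=0$ returns $\Lmodd_n$ to $L_n$, so the $\epsilon^0$ term is exactly $\det(sI-L_n)=a(s)$, with nothing to compute. The $\epsilon^2$ term collects only the $+\epsilon^2$ from $(s-2-\epsilon)^2$, whence $c_{j_1,j_2}(s)=\psi_{j_1-1}\,\psi_{j_2-j_1-1}\,\psi_{n-j_2}$. The $\epsilon^1$ term is $b_{j_1,j_2}(s)=-2(s-2)\,\psi_{j_1-1}\psi_{j_2-j_1-1}\psi_{n-j_2}$ plus the middle bracket of Proposition~\ref{prp:characteristic:path:perturb:2}. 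It now remains to extract the two lowest coefficients (the constant term and the coefficient of $s$) of $b_{j_1,j_2}$ and $c_{j_1,j_2}$, which is where all the work lies.

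For this I would Taylor-expand each factor about the origin as $\psi_m(s)=\psi_m(0)+\psi_m'(0)\,s+O(s^2)$, the two needed numbers being obtained in closed form by evaluating and differentiating the recurrence $\psi_m=(s-2)\psi_{m-1}-\psi_{m-2}$ at $s=0$. The one subtlety I would flag explicitly is that the three blocks do \emph{not} all satisfy the same boundary condition: the two outer blocks of lengths $j_1-1$ and $n-j_2$ each contain a genuine path endpoint (diagonal entry $1$), whereas the middle block of length $j_2-j_1-1$ is purely interior (all diagonal entries $2$). An endpoint block has $|\psi_m(0)|=1$, while the interior block has $|\psi_m(0)|=m+1$; this is precisely the mechanism that makes the middle factor deliver the value $c_{j_1,j_2}(0)=(-1)^{n}(j_2-j_1)=j_2-j_1$ (using $n$ even). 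The quadratic growth of the derivatives $\psi_m'(0)$ in the block lengths then supplies, upon multiplying the triple products out to first order in $s$, a cluster of quadratic-in-position contributions whose sum I would simplify into $b_{j_1,j_2}(0)=2$ and the stated coefficient of $s$.

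The principal obstacle is exactly this last simplification. Each of $b_{j_1,j_2}$ and $c_{j_1,j_2}$ is a sum of several triple products, and the linear-in-$s$ coefficient demands tracking, product by product, which of the three factors is differentiated, with the endpoint/interior distinction affecting both $\psi_m(0)$ and $\psi_m'(0)$. The delicate step is to recognise that the resulting quadratic-in-$(j_1,j_2)$ terms recombine --- after inserting $p_1^*=\tfrac{n+2}{4}$ and $p_2^*=\tfrac{3n+2}{4}$ and completing the square --- into the two clean squared deviations $(j_1-p_1^*)^2+(j_2-p_2^*)^2$, leaving the position-independent remainder $\tfrac{3n^2-4}{8}-\tfrac{n(j_1-j_2)}{2}$. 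As in the one-port case this is fundamentally systematic bookkeeping resting on the tridiagonal recurrence, so I would organise the computation to separate position-dependent from position-independent contributions from the outset, which is what makes the squared-deviation form emerge transparently.
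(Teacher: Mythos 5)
Your proposal is correct and follows essentially the same route as the paper, which itself omits the proof and describes it as exactly this: apply Proposition~\ref{prp:characteristic:path:perturb:2}, collect the $\epsilon^0$, $\epsilon^1$, $\epsilon^2$ terms, and extract the coefficients of $s^0$ and $s^1$ via the tridiagonal recurrence. Your one added observation --- that the middle block is a pure Toeplitz block with $|\psi_m(0)|=m+1$ rather than an endpoint block with $|\psi_m(0)|=1$ --- is in fact essential and not merely a ``subtlety'': Proposition~\ref{prp:characteristic:path:perturb:2} as literally written uses the single endpoint family ($\psi_1=s-1$) for all three factors, which would give $c_{j_1,j_2}(0)=(-1)^{n-2}=1$ instead of $j_2-j_1$, so your correction is what makes the derivation actually deliver the stated lemma (e.g.\ for $n=6$, $j_1=2$, $j_2=5$ one checks directly that $c(s)=(s-1)^3(s-3)$, with $c(0)=3=j_2-j_1$).
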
\vspace{-2mm}
\indent Just like proof of Lemma~\ref{lem:perturb:epsi:num:den}, the proof of Lemma~\ref{lem:perturb:epsi:num:den:two} also follows by a careful and straight-forward application of Proposition~\ref{prp:characteristic:path:perturb:2}, in which the tridiagonal structure is utilized and the terms corresponding to $\epsilon^0$, $\epsilon^1$ and $\epsilon^2$ are systematically collected (for degree in $s\leqslant2$); we skip this proof too for the same reason as for Lemma~\ref{lem:perturb:epsi:num:den}.

The lemma below proves that the smallest eigenvalue shift after perturbation is maximized at  $p^*$ location (the physical center of a path graph).
\begin{lemma}\label{lem:perturb:shiftmax:one}
	Consider an odd integer $n \geqslant 3$ and let $L_n$ be the Laplacian matrix for the path graph $P_n$.
	Let $j$ with  $1 \leqslant j < n $ and define
	$p^* := \cfrac{n+1}{2}$ which is the physical center of the path graph.
	Consider the perturbed Laplacian matrix
	$\Lmod_n := L_n + \epsilon e_{j} e_{j}^T$.
	
	\noindent Then the smallest eigenvalue shift of $\Lmod_n$, up to $2^{nd}$ order in $\epsilon$, is \vspace{-2mm}
	\[\lambda_{min}(\epsilon,j)= \beta_1 \epsilon +\beta_2 \epsilon^2=\frac{\epsilon}{n} - \frac{\epsilon^2}{12n^2} \left\{(n^2-1)+12(j-p^*)^2\right\}. \]
	 Further, $\lambda_{min}(\epsilon,j)$ gets maximized globally at $j=p^*$. 
\end{lemma}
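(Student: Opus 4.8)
The plan is to read off the characteristic polynomial of $\Lmod_n$ from Lemma~\ref{lem:perturb:epsi:num:den}, recognize it as exactly the input required by the root-perturbation formula of Lemma~\ref{lem:perturb:epsi:num:den:series}, and then supply the low-order coefficients of $\det(sI-L_n)$ from Proposition~\ref{prp:characteristic:path:explicit}. First I would invoke Lemma~\ref{lem:perturb:epsi:num:den} to write the characteristic polynomial as $p_\epsilon(s) = a(s) + \epsilon b_j(s)$ with $a(s) = \det(sI-L_n)$ and $b_j(s) = -1 + \bigl(\tfrac{n^2-1}{4} + (j-p^*)^2\bigr)s + O(s^2)$, having simplified $\tfrac{(n-1)^2 + 2(n-1)}{4} = \tfrac{(n-1)(n+1)}{4} = \tfrac{n^2-1}{4}$. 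Thus $b_0 = -1$ and $b_1 = \tfrac{n^2-1}{4} + (j-p^*)^2$. Since $\lambda_{\min}(L_n) = 0$, the smallest root of $a(s)$ is at the origin, so $a_0 = 0$; this together with $a_1\neq 0$ and $b_0 \neq 0$ verifies the hypotheses of Lemma~\ref{lem:perturb:epsi:num:den:series}.

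Next I would extract $a_1$ and $a_2$. Proposition~\ref{prp:characteristic:path:explicit} gives $\det(sI + L_n) = ns + \tfrac{n(n^2-1)}{6}s^2 + \cdots$; since $a(s) = \det(sI - L_n) = (-1)^n \det\bigl((-s)I + L_n\bigr)$ and $n$ is odd, flipping $s \mapsto -s$ together with the overall sign yields $a(s) = ns - \tfrac{n(n^2-1)}{6}s^2 + \cdots$, so that $a_1 = n$ and $a_2 = -\tfrac{n(n^2-1)}{6}$. Feeding these into Lemma~\ref{lem:perturb:epsi:num:den:series} gives $\beta_1 = -b_0/a_1 = 1/n$ and
\[
\beta_2 = \frac{a_1 b_1 b_0 - a_2 b_0^2}{a_1^3}
= \frac{-n\bigl(\tfrac{n^2-1}{4} + (j-p^*)^2\bigr) + \tfrac{n(n^2-1)}{6}}{n^3}
= -\frac{1}{12n^2}\bigl\{(n^2-1) + 12(j-p^*)^2\bigr\},
\]
using $-\tfrac14 + \tfrac16 = -\tfrac{1}{12}$. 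This establishes the claimed expansion $\lambda_{\min}(\epsilon,j) = \tfrac{\epsilon}{n} - \tfrac{\epsilon^2}{12n^2}\{(n^2-1) + 12(j-p^*)^2\}$ to second order, and in particular recovers the value in part~(1) of Theorem~\ref{thm:1port_path_Eigenshift} when $j=p^*$.

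Finally, for the optimization I would note that $\beta_1 = 1/n$ is independent of $j$, while the second-order coefficient $\beta_2$ is a strictly decreasing function of $(j-p^*)^2$. Because $n$ is odd, $p^* = (n+1)/2$ is itself an admissible integer index, so $\min_j (j-p^*)^2 = 0$ is attained uniquely at $j = p^*$; every other integer $j$ gives $(j-p^*)^2 \geqslant 1$ and hence a strictly smaller value of $\lambda_{\min}(\epsilon,j)$. Consequently, for all sufficiently small $\epsilon$ the shift is maximized globally at $j = p^*$. I expect no genuine obstacle beyond the algebraic substitution; the one delicate point is the $(-1)^n$ sign conversion from Proposition~\ref{prp:characteristic:path:explicit} to $\det(sI-L_n)$, which relies essentially on the hypothesis that $n$ is odd and is exactly what produces the correct positive first-order shift $\tfrac{\epsilon}{n}$.
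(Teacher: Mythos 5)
Your proposal is correct and follows essentially the same route as the paper: invoke Lemma~\ref{lem:perturb:epsi:num:den} for the form $a(s)+\epsilon b_j(s)$, read $a_1=n$, $a_2=-\tfrac{n(n^2-1)}{6}$ from Proposition~\ref{prp:characteristic:path:explicit}, apply equation~\eqref{eq:taylor:inverse} of Lemma~\ref{lem:perturb:epsi:num:den:series}, and conclude by observing the resulting parabola in $j$ peaks at $j=p^*$. Your explicit justification of the $(-1)^n$ sign conversion (which is where the hypothesis that $n$ is odd enters) is a point the paper leaves implicit, but the argument is otherwise identical.
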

	
\indent See appendix for the proof. The lemma below shows that the smallest eigenvalue shift after perturbation is maximized at the physical centers of the two halves of a path graph.
\begin{lemma}\label{lem:perturb:shiftmax:two}
	Consider an even integer $n \geqslant 6$ such that $n/2$ is odd
	and let $L_n$ be the Laplacian matrix for the path graph $P_n$.\\
	Let $j_1$ and $j_2$ satisfy $1 \leqslant j_1 < n/2 <  j_2 \leqslant n$ and 
	define $p_1^* := \cfrac{n+2}{4}$ and $p_2^* := \cfrac{3n+2}{4}$, the physical centers of each half of the path graph.
	
	\noindent Consider the perturbed Laplacian matrix\\
	$\Lmodd_n := L_n + \epsilon e_{j_1} e_{j_1}^T + \epsilon e_{j_2} e_{j_2}^T $.
	
	\noindent Then the smallest eigenvalue shift of $\Lmodd_n$, up to $2^{nd}$ order in $\epsilon$, is  
	\begin{align*}
		\lambda_{min}(\epsilon,j_1, j_2) = \beta_1 \epsilon +\beta_2 \epsilon^2 =\cfrac{2\epsilon}{n}  -\frac{\epsilon^2}{12n^2}\left[(n^2-4)+24\left\{(j_1-p_1^*)^2+(j_2-p_2^*)^2\right\} \right].  
	\end{align*}  
	Further, $\lambda_{min}(\epsilon,j_1, j_2)$ gets maximized globally at $j_1=p_1^*$, $j_2=p^*_2$. 
\end{lemma}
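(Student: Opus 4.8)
The plan is to assemble the result from two packaged ingredients: the low-order structure of the characteristic polynomial supplied by Lemma~\ref{lem:perturb:epsi:num:den:two}, and the root-expansion formula of Lemma~\ref{lem:perturb:doubleepsi:num:den:series}. Writing $\det(sI-\Lmodd_n)=a(s)+\epsilon\, b_{j_1,j_2}(s)+\epsilon^2\, c_{j_1,j_2}(s)$ as in Lemma~\ref{lem:perturb:epsi:num:den:two}, I first read off the coefficients that the expansion needs: $a_0=0$ (since $0$ is an eigenvalue of the Laplacian $L_n$), and from the stated forms of $b_{j_1,j_2}$ and $c_{j_1,j_2}$ we get $b_0=2$, $c_0=j_2-j_1$, and $b_1=-B$ with $B:=\frac{3n^2-4}{8}-\frac{n(j_1-j_2)}{2}+(j_1-p_1^*)^2+(j_2-p_2^*)^2$.

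The next step is to pin down $a_1$ and $a_2$ for $a(s)=\det(sI-L_n)$ from Proposition~\ref{prp:characteristic:path:explicit}, which gives $\det(sI+L_n)=ns+\frac{n(n^2-1)}{6}s^2+\cdots$, the characteristic polynomial of $-L_n$. Since $a(s)=\det(sI-L_n)=(-1)^n\det(-sI+L_n)$, the coefficient of $s^k$ in $a$ is $(-1)^{n+k}$ times the coefficient of $s^k$ in $\det(sI+L_n)$. Here $n$ is even, so $a_1=-n$ and $a_2=\frac{n(n^2-1)}{6}$. This sign flip relative to the odd case of Lemma~\ref{lem:perturb:shiftmax:one} is the one bookkeeping point to handle carefully; it is exactly compensated by $b_0=+2$ here (versus $b_0=-1$ in the single-port case), so the leading term remains positive.

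I then substitute into $\beta_1=-b_0/a_1$ and $\beta_2=(a_1 b_1 b_0-a_2 b_0^2-a_1^2 c_0)/a_1^3$ from Lemma~\ref{lem:perturb:doubleepsi:num:den:series}. The first gives $\beta_1=-2/(-n)=2/n$, independent of $(j_1,j_2)$. For $\beta_2$, plugging in $a_1=-n$, $a_2=\frac{n(n^2-1)}{6}$, $b_0=2$, $b_1=-B$, $c_0=j_2-j_1$, the cross term $\frac{n(j_2-j_1)}{2}$ inside $B$ cancels against the $-a_1^2 c_0=-n^2(j_2-j_1)$ contribution, and the purely-$n$ part collapses via $\frac{3n^2-4}{4}-\frac{2(n^2-1)}{3}=\frac{n^2-4}{12}$, leaving
\[
\beta_2=-\frac{1}{12n^2}\Bigl[(n^2-4)+24\bigl\{(j_1-p_1^*)^2+(j_2-p_2^*)^2\bigr\}\Bigr].
\]
This reproduces the claimed $\lambda_{\min}(\epsilon,j_1,j_2)=\beta_1\epsilon+\beta_2\epsilon^2$. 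I expect this cancellation to be the main (though routine) obstacle: it is what eliminates the antisymmetric cross term $n(j_2-j_1)$ so that only the symmetric squared deviations from the two half-centers survive.

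Finally, for the maximization: $\beta_1$ is port-independent and $\beta_2$ is strictly decreasing in $(j_1-p_1^*)^2+(j_2-p_2^*)^2$, so for sufficiently small $\epsilon>0$ the optimizer is the admissible integer pair minimizing this sum of squares. Because $n/2$ is odd we have $n\equiv 2 \pmod 4$, whence both $p_1^*=\frac{n+2}{4}$ and $p_2^*=\frac{3n+2}{4}$ are integers; one checks $p_1^*<n/2<p_2^*$ for $n\geqslant 6$, so $(j_1,j_2)=(p_1^*,p_2^*)$ is admissible and sends both squares to zero. Hence $\lambda_{\min}$ is globally maximized (to second order) at $j_1=p_1^*$, $j_2=p_2^*$, with optimal value $\frac{2\epsilon}{n}-\frac{\epsilon^2}{12n^2}(n^2-4)$.
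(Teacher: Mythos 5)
Your proposal is correct and follows essentially the same route as the paper's own proof: read off $a_1=-n$, $a_2=\tfrac{n(n^2-1)}{6}$, $b_0=2$, $b_1$, $c_0=j_2-j_1$ from Lemma~\ref{lem:perturb:epsi:num:den:two} and Proposition~\ref{prp:characteristic:path:explicit}, feed them into the $\beta_1,\beta_2$ formulas of Lemma~\ref{lem:perturb:doubleepsi:num:den:series}, and observe that the resulting quadratic in $(j_1,j_2)$ peaks at $(p_1^*,p_2^*)$. Your added care about the sign flip in $a_1,a_2$ for even $n$ and the integrality/admissibility of $p_1^*,p_2^*$ (via $n\equiv 2\pmod 4$) are correct refinements of steps the paper states without comment.
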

\noindent See appendix for the proof. 
\begin{table}[!h]
	\centering
	\caption{$\lambda_{min}$(scaled) vs ports : $2$-ports selection in $P_{14}$}
	\scalebox{1}{
	$\begin{tabular}{||c||c|c|c|c|c|c|c||}
		\hline
		Port & \multirow{2}{*}{8} & \multirow{2}{*}{9}& \multirow{2}{*}{10}& \multirow{2}{*}{11}& \multirow{2}{*}{12}& \multirow{2}{*}{13}& \multirow{2}{*}{14}\\
		\# &  & & & & & & \\
		\hline \hline
		1	& -1.455& -1.450&  -1.447& \textbf{-1.446}& -1.447& -1.450& -1.455\\ \hline
		2	& -1.450& -1.445&  -1.442& \textbf{-1.441}& -1.442& -1.445& -1.450\\ \hline
		3	& -1.447& -1.442&  -1.439& \textbf{-1.438}& -1.439& -1.442& -1.447\\ \hline
		4	& \textbf{-1.446}& \textbf{-1.441}&  \textbf{-1.438}& \textbf{-1.437}& \textbf{-1.438}& \textbf{-1.441}& \textbf{-1.446}\\ \hline
		5	& -1.447& -1.442&  -1.439& \textbf{-1.438}& -1.439& -1.442& -1.447\\ \hline
		6	& -1.450& -1.445&  -1.442& \textbf{-1.441}& -1.442& -1.445& -1.450\\ \hline
		7	& -1.455& -1.450&  -1.447& \textbf{-1.446}& -1.447& -1.450& -1.455\\ 		
		\hline \hline
	\end{tabular}$}
	
\end{table}
\noindent The following remark discusses about the independence of one port selection Then other w.r.t. Defn.~\ref{def:optim:all3}[i](MP\textbf{L}SE).
\begin{remark}  \label{rem:RC:initial:voltage:reconciliation}
An interesting and significant consequence of the above Lemma~\ref{lem:2ports_path_EigenShift} is the 
\underline{independence} of the roles that $j_1$ and $j_2$ play in the contributions towards
the second order effect in the shift of the minimum eigenvalue. This independence
points out that choice of one node for leakage does not affect the optimality of the other: this
distinguishes the RC circuit leakage resistance placement problem from the
`ambulance location' problem well-studied in the facility location literature.
One possible explanation for the independence is that in an ambulance location problem,
in the case when two central locations are to be found, 
once one location is fixed, then the optimality of the other is (reasonably)
dependent on the location of the fixed-one. However, in the case of RC circuits,
using the Courant-Fischer interpretation of the minimum eigenvalue and the
corresponding eigenvector, we interpret that for a given node-pair choice, for leakage the
initial conditions of the  corresponding `slowest mode' are such that the
discharge is \underline{slowest}. Unlike the ambulance location problem, here, the initial charge vector (the so-called slowest discharge vector) gets re-decided based on a change in the node-pair choice. Thus a re-distribution of initial voltage is possible in this problem; this perhaps explains the possibility of independence of one port choice from another. This re-distribution of the initial voltages (at $t=0$) of the
capacitors  depending on the choice of the two nodes where leakage 
resistances are placed, is the aspect that distinguishes this notion of
$k$-centrality from that of the ambulance-location problem and perhaps allows the independence of the two optimal locations.
\end{remark}

\section{Comparison of proposed metrics with other \\control-theoretic measures: general graphs}\label{sec:heuristics}

Except Theorem~\ref{thm:firstorder:general}, most results in this paper pertain to path graphs. In this section, we investigate using randomly generated
graphs, both trees and graphs with cycles, the extent to which we have agreement amongst the various metrics this paper proposes. We also
pursue two more control-theoretic measures.  Of course, each of these measures have different significances and they are not expected to agree 100\%.
Tables~\ref{tab:comp2} have the percentages of agreement with the first of the three definitions listed in Definition~\ref{def:optim:all3}.
In addition to these three, the following two notions have control-theoretic relevance and are described below.

\newcommand{\ONES}{{\mathbf{1}}}
\newcommand{\Kmin}{K_{\rm min}}
\vspace{-2mm}
\begin{itemize}
 \item {\bf{ARE solution $\Kmin$}}: Consider the RC circuit again associated to a graph, and instead of introducing a 
  leakage resistance at one or more of the chosen ports, one can consider charging the circuit externally and in this matter, if $i$ is the current provided through
 the chosen ports, and $v$ is the vector of voltages across these ports respectively, then $v^T i$ is the total power provided to the circuit, and minimizing
 $\int_{-\infty}^0 v^T i dt$ over all trajectories that start from an initially fully discharged circuit (at $t=-\infty$) to the vector of all capacitor voltages equal
  to 1~V (i.e. vector $\ONES\in\Rn$),  at $t=0$, is of typical control-theoretic relevance,
 in particular passivity studies. When choosing $k$ ports, one can define those $k$ ports
 as central where this energy required to charge the circuit is minimized over all combinations of $k$ ports: this amounts to studying different 
 Algebraic Riccati Equations (ARE) for each of the choice of $k$ ports, and minimizing $\ONES ^T \Kmin \ONES$ over all such choices of $k$ ports; $\Kmin$ being
 the minimum ARE solution w.r.t.  the passivity supply rate for the RC circuit corresponding to the particular choice of $k$ ports.
 \item {\bf Observability Gramian $Q$}: Consider again the RC circuit associated to the graph. In this measure, we ask the question: how much
 energy can be extracted from the chosen ports, say $k$ of them, from a given initial condition voltage vector $v_0$? Those $k$ ports are called central
 for which the energy that can be extracted is maximum (over all choices of $k$ ports): further the maximization is done w.r.t. the initial condition voltage vector equal
 to $\ONES$: this vector corresponding to the so-called `consensus' vector. See \cite{Tsakalis}, for example, for more details.
\end{itemize}
 \begin{table}[!h]
 	\begin{center}
 		\caption{\label{tab:ComparativeStudy} Comparison of various metrics for general graphs, taking into account choices of up to three ports.} \label{tab:comp2}
 		$\begin{tabular}{|l|r|c|c|c|c|c|}
 			\hline 
 			\multirow{4}{*}{\textbf{Graphs}} & \multirow{4}{*}{\textbf{n}} & 
 			\multicolumn{5}{|c|}{\textbf{$\%$ match with MP\emph{L}SE } ($\epsilon=0.01$) for $100$ graphs}  \\  \cline{3-7}
 			&  & 
 			\multirow{2}{*}{\textbf{M\emph{Sup}LE}} & \multirow{2}{*}{\textbf{M\emph{Sub}LE}} & \multirow{2}{*}{\textbf{Eigenvector}} &
 			\multirow{2}{*}{\textbf{$\mathbf{1^{T}} K_{min}\mathbf{1}$}} & \multirow{2}{*}{\textbf{$\mathbf{1^{T}}Q \mathbf{1}$}} \\ 
 			&  &   &  &  &  &  \\ \hline \hline
 			Path & $11$ &  $100\%$ & $100\%$ & $100\%$ & $100\%$ & $100\%$ \\ \hline
 			Tree & $7$ &  $100\%$ & $97.67\%$ & $66\%$ & $97\%$ & $59\%$ \\ \hline 
 			Tree & $9$ &  $100\%$ & $91\%$ & $69.67\%$ & $99.67\%$ & $63\%$ \\ \hline     
 			General & $7$ &  $100\%$ & $78.15\%$ & $40.74\%$ & $90\%$ & $80.37\%$ \\ \hline
 			General & $9$ &  $100\%$ & $70.5\%$ & $33.34\%$ & $97.92\%$ & $90.63\%$ \\ \hline \hline
 		\end{tabular}$
 	\end{center}   
 	\vspace{5mm}
 \end{table}
{\bf Laplacian eigenvector:} Various results in the earlier section indicate (for path graphs) about how a certain eigenvector of the Laplacian matrix has
components zero exactly for those nodes which are $k$-central. In our opinion, this result is a generalization (albeit only for path graphs) of the
well-known \cite{BapatSukanta} fact that, for trees, the Fiedler vector components increase in magnitude as one moves along branches away from the
`characteristic block'. Thus, while the characteristic node (when one exists) is the center for trees, our results for path graphs indicate that
eigenvectors for larger eigenvalues also contain information about $k$ centers. More precisely, when identifying the central $k$-nodes for a graph on $n$ nodes,
then the eigenvector $v_{k+1}$, corresponding to eigenvalue $\lambda_{k+1}$ (assuming that this eigenvalue is not repeated), has magnitude smallest at
those $k$-nodes which are `central' in the graph. This observation agrees for path graphs, as formulated and proved in the theorems in the main section.
In Tables~\ref{tab:comp2}, we check the extent to which this agrees with other metrics for general graphs: both trees and graphs with cycles.
The column marked `Eigenvector' corresponds to picking those nodes which are smallest in magnitude in the eigenvector $v_{k+1}$ of the Laplacian matrix,
and checking agreement about this set with the $k$-centers identified by MP\emph{L}SE.


\section{Concluding remark and future directions}\label{sec:conclusion}
In this work, we introduced two new measures, Maximized Perturbed Laplacian Smallest Eigenvalue (MP\textbf{L}SE) and Minimized Super-stochastic Largest Eigenvalue (M\textbf{Sup}-LE), for selecting the best $k$ ports in a graph. We further proved that the results obtained above metrics  perfectly aligned with the results obtained using the existing metric, Minimized Sub-stochastic Largest Eigenvalue (M\textbf{Sub}-LE) for the path graph.

We proved that the smallest eigenvalue of the Laplacian matrix for a path graph $P_n$ is equal to the smallest eigenvalue of a path graph $P_{2n}$ at optimal selection of one and two ports respectively. Additionally, we observed that the eigenvalues of the Laplacian matrix $\Lmod_n$ for the n-node path graph are all present as eigenvalues in the Laplacian matrix $\Lmodd_{2n}$ of the 2n-node path graph. These eigenvalues are interlaced with the other $n$ eigenvalues of the matrix. Furthermore, it has been showed that the smallest eigenvalue of the perturbed Laplacian matrix of the general graph, considering an up-to-first-order approximation in $\epsilon$, remains independent of the port indices.

When selecting two ports, we showed that the choice of one optimal port is independent of the other in the path graph. In observation, the choice of optimal port w.r.t. metrics (MP\textbf{L}SE and M\textbf{Sup}-LE) being independent is valid for any $k$ ports selection. Further, it is also observed that $k^{th}$ port selection remains independent of $k-1$ ports selection i.e. fixing one and searching for others optimal ports doesn't change the optimal location of other ports.

We have proved that the shift in the minimum eigenvalue ($\lambda_{\min}$) with respect to the $2$-optimal ports exceeds the shift obtained by linearly scaling a single optimal port to $2$ ports. Furthermore, our observation indicates that an optimal $k$-ports perturbation results in a larger shift than $k$ times the shift caused by a $1$-port perturbation. Consequently, the function $\lambda_{\min}(k)$ is convex and provides a greater return than the scaled investment (in contrast to the law of diminishing returns).

For future directions, we aim to demonstrate the independence of selecting one optimal port from another when choosing $k$ ports in the path graph. We aim to prove $\lambda_{\min}(k)$  convexity for $k$-ports selection. We plan to globally prove the selection of $k$ optimal ports for the path graph using all three metrics (MP\textbf{L}SE, M\textbf{Sub}-LE, and M\textbf{Sup}-LE). Our future work will focus on globally proving the selection of $k$ optimal ports for general graphs using our two metrics (MP\textbf{L}SE and M\textbf{Sup}-LE).

\bibliographystyle{IEEEtran}

\newpage
\section{Appendix}
\begin{proof}[Proof of Theorem \ref{thm:1port_path_Eigenshift}:] 
%
%
	\noindent \textbf{Part a)}
	\noindent With the setup given, Lemma~\ref{lem:1port_path_EigenShift} states eigenvalue shift function as follows: \vspace{-3mm}
	\begin{equation*}
		\lambda_{min}(p) =\cfrac{\epsilon}{n}-\cfrac{\epsilon^2}{4n}\sum_{j=2}^{n}\cfrac{\cos^2\Biggl(\cfrac{\pi (j-1) p}{n}-\cfrac{\pi (j-1)}{2n}\Biggr)}{\sin^2\left(0.5\pi (j-1)/n \right)\left\{\sum_{p=1}^{n}\cos^2\left(\frac{\pi p (j-1)}{n}-\frac{\pi(j-1)}{2n}\right)\right\}}+O(\epsilon^3)~~\mbox{terms}\\ 
	\end{equation*}
	Besides Lemma~\ref{lem:1port_path_EigenShift}, the eigenvalue shift function from Lemma~\ref{lem:perturb:shiftmax:one}:\\
	$\lambda_{min}(p)=\cfrac{\epsilon}{n} - \epsilon^2 \left[\cfrac{(n^2-1)+12(p^*-p)^2}{12n^2}\right] $
	
	\noindent In addition to above result Lemma~\ref{lem:perturb:shiftmax:one} also showed that at $p~=~p^*$, $\lambda_{min}(p)$ gets maximized. 
	
	\noindent Hence, combining the values of above two expressions at $p=p^*$ gives: \vspace{-5mm}
	\begin{align*}
		\underset{p}{\max}~\lambda_{min}=\lambda_{min}|_{p=p^*}=&\cfrac{\epsilon}{n}-\cfrac{\epsilon^2}{4n}\sum_{j=2}^{n}\cfrac{\cos^2\left(\pi (j-1)/2\right)}{\sin^2\left(0.5\pi (j-1)/n \right)\left\{\sum_{p=1}^{n}\cos^2\left(\frac{\pi p (j-1)}{n}-\frac{\pi(j-1)}{2n}\right)\right\}}\\
		=&\frac{\epsilon}{n} - \epsilon^2 \left[\frac{(n^2-1)}{12n^2}\right]
	\end{align*}
	This proves part (a) of the theorem.\\
	
	\noindent \textbf{Part~b)} 
	\noindent Theorem~\ref{thm:kports_path}, the Defn~\ref{def:optim:all3}[ii](M\textbf{Sub}-LE) for $k$-ports selection states that $p^*_i=\cfrac{(2i-1)n+k}{2k}$ with $i=1,2, \dots, k$ are optimal $k$ ports under the divisibility assumption.
	
	\noindent Thus, for $k=1$, $p=p^*=\frac{n+1}{2}$ which is same as Defn~\ref{def:optim:all3}[i](MP\textbf{L}SE) as mentioned in part (a) referring Lemma~\ref{lem:perturb:shiftmax:one}. 
	
	\noindent Further, Theorem~\ref{thm:superstochastic_eigenshift} shows the equivalence between the definitions MP\textbf{L}SE and M\textbf{Sup}-LE.
	
	\noindent Hence, if $p=p^*$ is the best 1-center w.r.t. MP\textbf{L}SE, it is center w.r.t. M\textbf{Sup}-LE also.
	
	\noindent Thus, $p=p^*$, is the center w.r.t. all three Defn~\ref{def:optim:all3}.\\
	This completes, the proof of part (b) of the theorem.\\

	\noindent \textbf{Part~c)} 
%
	\noindent Consider the following general expression for the eigenvectors of the path graph from Proposition~\ref{prop:pathEigvalEigvec}. 
\begin{equation*}
	v_q(p)=\cos\Big(\frac{\pi (q-1) p}{n}-\frac{\pi (q-1)}{2n}\Big)\quad \mbox{with}~~q=\{1,2, \dots,n\}.
\end{equation*}
Choosing the Fiedler vector 
$v_2(p)$, for $1$-port selection, from Proposition~\ref{prop:pathEigvalEigvec}, we see that part (c) follows and thus  
%
%
completes the proof of Theorem~\ref{thm:1port_path_Eigenshift}.
\end{proof} 

\begin{proof}[Proof of Theorem \ref{thm:2ports_path_Eigenshift}:]
%
%
	\noindent \textbf{Part~a)} 
With the given setup, Lemma~\ref{lem:2ports_path_EigenShift} states the eigenvalue shift satisfies:\\
	\scalebox{1}{
$	\lambda_{min}(\Lmodd,p_1, p_2, \epsilon)=\cfrac{2\epsilon}{n}-\cfrac{\epsilon^2}{4n}\sum_{j=2}^{n}\cfrac{\Biggl\{\cos\biggl(\cfrac{\pi (j-1) p_1}{n}-\cfrac{\pi (j-1) }{2n}\biggr)+\cos\biggl(\cfrac{\pi (j-1) p_2}{n}-\cfrac{\pi (j-1) }{2n}\biggr)\Biggr\}^2}{\sin^2\left(0.5\pi (j-1)/n \right)\left\{\sum_{p=1}^{n}\cos^2\left(\frac{\pi p (j-1)}{n}-\frac{\pi(j-1)}{2n}\right)\right\}}$}

\noindent Besides Lemma~\ref{lem:2ports_path_EigenShift}, the eigenvalue shift function from Lemma~\ref{lem:perturb:shiftmax:two}:\\
	$\lambda_{min}(\Lmodd_{n,\epsilon}, p_1, p_2) = \cfrac{2\epsilon}{n}  -\epsilon^2 \left[\cfrac{n^2-4}{24n}+\{\cfrac{(p_1-p_1^*)^2+(p_2-p_2^*)^2}{n}\}\right]$\\
	\noindent In addition to above result Lemma~\ref{lem:perturb:shiftmax:two} also showed that (at $p_1~=~p_1^*=\frac{n+2}{4}$, $p_2=p_2^*=\frac{3n+2}{4}$)
	evaluating the expression at $p_1^*$ and $p_2^*$,  $\lambda_1(p_1, p_2)$ gets maximized over all pairs $p_1,~p_2 \in \Z$ at $p_1^*$ and $p_2^*$. 
	
	\noindent Hence, the value of above two expressions at $p_1=p_1^*=\frac{n+2}{4}$, $p_2=p_2^*=\frac{3n+2}{4}$ is: \vspace{-3mm}
	
	\begin{equation*}
		\begin{split}
			\underset{p_1,p_2}{\max}~ \lambda_{min}(\Lmodd_{n,\epsilon}, p_1, p_2)&=\lambda_{min}(\Lmodd_{n,\epsilon}, p_1, p_2)\Bigg |_{p_1=p_1^*, p_2=p_2^*}=\frac{2\epsilon}{n} -\epsilon^2\left[\frac{n^2-4}{12n^2} \right]\\
			&=\cfrac{2\epsilon}{n}-\cfrac{\epsilon^2}{4n}\sum_{j=2}^{n}\cfrac{\Bigl\{\cos\left(\pi (j-1)/4 \right)+\cos\left(3\pi (j-1)/4 \right)\Bigr\}^2}{\sin^2\left(0.5\pi (j-1)/n \right)\left\{\sum_{p=1}^{n}\cos^2\left(\frac{\pi p (j-1)}{n}-\frac{\pi(j-1)}{2n}\right)\right\}}
		\end{split}
	\end{equation*}
	This proves the part (a) of the theorem.
	
	\noindent \textbf{Part~b)} 
	\noindent Theorem~\ref{thm:kports_path}, the Defn~\ref{def:optim:all3}[ii](M\textbf{Sub}-LE) for $k$-ports selection states $p^*_i=\cfrac{(2i-1)n+k}{2k}$ with $i=1,2, \dots, k$.
	
	\noindent Henceforth, for $k=2$, $p_1=p_1^*=\frac{n+2}{4}$, $p_2=p_2^*=\frac{3n+2}{4}$ which is same as Defn~\ref{def:optim:all3}[i](MP\textbf{L}SE) as shown in part (a). 
	
	\noindent Further, Theorem~\ref{thm:superstochastic_eigenshift} shows the equivalence between the Definitions~\ref{def:optim:all3}[i](MP\textbf{L}SE) and \ref{def:optim:all3}[iii](M\textbf{Sup}-LE). Hence, if $p_1=p_1^*$, $p_2=p_2^*$ are the two centers w.r.t. Defn~\ref{def:optim:all3}[i](MP\textbf{L}SE), then they are the two centers w.r.t. Defn~\ref{def:optim:all3}[iii](M\textbf{Sup}-LE) also.
	
	\noindent This proves, $p_1=p_1^*=\frac{n+2}{4}$, $p_2=p_2^*=\frac{3n+2}{4}$, are the center w.r.t. all three Defn~\ref{def:optim:all3} and completes, the proof of part (b) of the theorem.
	
	\noindent \textbf{Part~c)} 
	\noindent From Proposition~\ref{prop:pathEigvalEigvec}, the eigenvectors of the path graph. 
	\begin{equation*}
		v_q(p)=\cos\Big(\frac{\pi (q-1) p}{n}-\frac{\pi (q-1)}{2n}\Big)\quad \mbox{with}~~q=\{1,2, \dots,n\}
	\end{equation*}
		Choosing $q$=3, eigenvector $v_3 \in \R^n$, for $2$-ports selection, from Proposition~\ref{prop:pathEigvalEigvec}, and equating $\cfrac{\pi (q-1) p}{n}-\cfrac{\pi (q-1)}{2n}=\cfrac{\pi}{2}$ and $\cfrac{3\pi}{2}$, we get part (c) and this completes
%
%
%
 the proof of part (c) and thus completes the proof of Theorem~\ref{thm:2ports_path_Eigenshift}.
\end{proof} 

\begin{proof}[Proof of Theorem~\ref{thm:kports_path}:]	
%
%
%
	\noindent \textbf{Part~a)} 
	This part of the proof consists of two parts: 
	\begin{itemize}
		\item that $p_i^*$ forming the set $S^*$ indeed achieves $\lambda_{\max}(^{S^*}\hat{Z})$
		\item any other $S\subset \bar{n}$, $|S|=k$ result in $\lambda_{\max}(^S\hat{Z})>\lambda_{\max}(^{S^*}\hat{Z})$
	\end{itemize}
	
	We know that \\
	$\arg \underset{\underset{\mbox{with $|S|=k$}}{\mbox{all subsets $S$ }}}{\max} \lambda_{\min}(^S\!\Lmod_n)=\arg \underset{\underset{\mbox{with $|S|=k$}}{\mbox{all subsets $S$ }}}{\min} \lambda_{\max}(-^S\!\Lmod_n)$ or $\arg \underset{\underset{\mbox{with $|S|=k$}}{\mbox{all subsets $S$ }}}{\min} \lambda_{\max}(-^S\!\Lmod_n \tau)$.
	
	\noindent Definition~\ref{def:optim:all3}(B) states that $k$ rows and $k$ columns are removed from $Z_n=I_n-\tau~^S\!\Lmod_n$ for searching for the $\underset{|S|=k}{\min}~\lambda_{\max}(^S\hat{Z})$
	
	\noindent However, after removing $k$ rows and $k$ columns of $L_n$:
	 $\hat{L}_n\equiv\hat{\Lmod}_n \equiv \hat{Z}$, 
	$\underset{|S|=k}{\min}~\lambda_{\max}(\hat{Z}) \iff \underset{|S|=k}{\max}~\lambda_{\min} (\hat{\Lmod}_n)$. 
	
	\noindent Further, removing $k$ rows and $k$ columns from $\Lmod_{n}$ and assuming consecutive rows/columns are not removed, we get that $\hat{\Lmod}_n$ is now a block diagonal matrix comprised of totally $k+1$ matrices: $k$-1 tridiagonal Toeplitz matrix $T^o$ and 2 tridiagonal pseudo-Toeplitz matrix $T^{po}$. 
	
	\noindent Lemma~\ref{lem:perturbedLvsN} showed that to maximize $\lambda_{\min}$, we get: $\lambda_{\min}(T_{2\ell}^o)=\lambda_{\min}(T^{po}_\ell)$ i.e. tridiagonal Toeplitz matrix $T^o$ and 2 tridiagonal pseudo-Toeplitz matrix $T^{po}$ of sizes $2\ell$  and $\ell$ respectively ($\ell\in\mathbb{Z}$).
	
	\noindent We know that after $k$ rows and $k$ columns removal, we get an $(n-k)\times(n-k)$ size matrix which is block diagonal. \\
	Therefore, $\ell+\underbrace{2\ell+2\ell+\dots+2\ell}_\text{$k-1$}+\ell=n-k$ $\implies$ $\ell=\cfrac{n-k}{2k}$.\\
	\noindent So, the optimal locations are at $p^*_1=\ell+1=\cfrac{n-k}{2k}+1=\cfrac{n+k}{2k}$.\\
	Similarly, $p^*_2=p^*_1+2\ell+1=\cfrac{n+k}{2k}+2\cfrac{n-k}{2k}+1=\cfrac{3n+k}{2k}$\\
	Hence, in general $p^*_i=\cfrac{(2i-1)n+k}{2k}$ with $i=1,2, \dots, k$. 
	
	\noindent At optimality, $^{S^*}\hat{Z}$ is tridiagonal Toeplitz matrix whose largest eigenvalue using formula from \cite[p.~116]{KouachiTridiag} for $a=c=1/2$ and $b=0$ for block of size $2\ell\times 2\ell$, we get $\lambda_{\max}(^{S^*}\hat{Z})=\cos(\pi/(2\ell+1))=\cos(k\pi/n)$.
	
	\noindent Since optimality provide only two sizes of the block diagonal sub-matrices, we first deduce that removal of $k$ rows and $k$ columns at \underline{any of the other locations} would lead to at least one of the following two cases: 
	\begin{itemize}\setlength{\itemindent}{10mm}
	\item [Case 1)]  At least one of the $k-1$ tridiagonal Toeplitz matrices $T^o$ is of size strictly smaller than $\frac{n-k}{k}$. This would result in $\lambda_{\min}(\Lmod)<\lambda_{\min}(\Lmod^*)$ (see Lemma~\ref{lem:perturbedLvsN}).
		\item [Case 2)] At least one of the $2$ tridiagonal pseudo-Toeplitz matrices $T^{po}$ is of size strictly smaller than $\frac{n-k}{2k}$. This would also result in $\lambda_{\min}(\Lmod)~<~\lambda_{\min}(\Lmod^*)$ (see Lemma~\ref{lem:perturbedLvsN}). Since any of the two cases would result in $\lambda_{\min}(\Lmod)~<~\lambda_{\min}(\Lmod^*)$, we note that the $2^{nd}$ part of the proof of part (a) is completed.
	\end{itemize}
	
	\noindent \textbf{Part~b)} 
	\noindent From Proposition~\ref{prop:pathEigvalEigvec}, the eigenvectors $v_q\in \Rn$ of the path graph for eigenvalue $\lambda_{q}$ 
	Choosing $q=k+1$, the eigenvector $v_{k+1}(p)$, for $k$-ports selection in Proposition~\ref{prop:pathEigvalEigvec}, we get \\
	$\cfrac{k\pi p}{n}-\cfrac{k\pi}{2n}=\cfrac{\pi(2i-1)}{2}$ for $i \in \mathbb{N}$ $\iff$ $(2p-1)k=n(2i-1)$ $\iff p=\cfrac{n(2i-1)+k}{2k}$
	
	\noindent (Note: $p's \in \mathbb{N}$,  $\frac{n}{k}$ is odd).
	
	\noindent So, $v_{k+1}(j)=0$ $\iff$ $j\in \{p_1^*, p_2^*, \dots p_k^*\}$. \\
	This completes, the proof of part (b) of the theorem. This completes the proof of Theorem~\ref{thm:kports_path}. 
	\end{proof}

\begin{proof}[Proof of Theorem~\ref{thm:superstochastic_eigenshift}:]
	The Laplacian matrix $L_n$ of path graph $P_n$ is perturbed as:\\
	$^S\!\tilde{L}_n:=L_n-\epsilon\sum_{j\in S}e_je_j^T$ where $S \subset \{1,2, \dots, n\},$ with $|S|=k<n$ and $\epsilon>0$.
	
	\noindent $^S\!\tilde{Z}_n:=Z_n+\epsilon\sum_{j\in S}e_je_j^T$
	where $Z_n=I_n-\tau L_n$, where $\tau<\Delta^{-1}$ with $\Delta$: the maximum degree of graph.
	
	\noindent Definition~\ref{def:optim:all3}[i](MP\textbf{L}SE) identifies nodes as center if at these $k$ nodes $\lambda_{\min}(\Lmod_n)$ gets maximized. 
	
	\noindent Defn.~\ref{def:optim:all3}[iii](M\textbf{Sup}-LE) identifies nodes as center if at these $k$ nodes $ \lambda_{\max}(\tilde{Z}_n=I_n-\tau \Lmod_n)$ get minimized.
	
%
	\noindent Now obtain stochastic matrix $Z_n$ corresponding to $L_n$ as: $Z_n=I_n-\tau L_n$. 
	
	\noindent $^S\!\tilde{Z}_n=I_n-^S\!\!\Lmod_n \tau =Z_n+\tau \epsilon \sum_{j\in S}e_je_j^T$. 
	
	\noindent Since scaling a matrix by $\tau>0$ only scales the eigenvalues, we first note \vspace{-2mm}
	\begin{equation}\label{eq:argmax:tau}
		\arg \underset{\underset{\mbox{with $|S|=k$}}{\mbox{all subsets $S$ }}}{\max} \lambda_{\min}(^S\!\Lmod_n)=\arg \underset{\underset{\mbox{with $|S|=k$}}{\mbox{all subsets $S$ }}}{\max} \lambda_{\min}(^S\!\Lmod_n \tau),
	\end{equation}
	and noting the negative sign, this equals
	\begin{equation}\label{eq:argmax:tau:1}
		\arg \underset{\underset{\mbox{with $|S|=k$}}{\mbox{all subsets $S$ }}}{\min} \lambda_{\max}(-^S\!\Lmod_n \tau ). 
	\end{equation}
	In addition, we observe that the operation $P\rightarrow P+cI$ merely shifts the eigenvalues accordingly; this gives
	\vspace{-2mm}
	\begin{equation}\label{eq:argmax:tau:2}
		\arg \underset{\underset{\mbox{with $|S|=k$}}{\mbox{all subsets $S$ }}}{\min} \lambda_{\max}(-\tau~^S\!\Lmod_n)=\arg \underset{\underset{\mbox{with $|S|=k$}}{\mbox{all subsets $S$ }}}{\min} \lambda_{\max}(I_n- ^S\!\!\Lmod_n \tau). 
	\end{equation}
	Hence, from equations (\ref{eq:argmax:tau}-\ref{eq:argmax:tau:2}): \\
	$\arg \underset{\underset{\mbox{with $|S|=k$}}{\mbox{all subsets $S$ }}}{\max} \lambda_{\min}(^S\!\Lmod_n)~=~\arg \underset{\underset{\mbox{with $|S|=k$}}{\mbox{all subsets $S$ }}}{\min} \lambda_{\max}(I_n~-~^S\!\Lmod_n \tau)$.
	
	\noindent Thus, Defn.~\ref{def:optim:all3}[i](MP\textbf{L}SE) and [ii](M\textbf{Sup}-LE) result in the same $k$ nodes being identified as the $k$-centers. \\
	This completes the proof of Theorem~\ref{thm:superstochastic_eigenshift}.
\end{proof}

\begin{proof}[Proof of Theorem~\ref{thm:firstorder:general}:]
	Consider Laplacian is perturbed at any of the $k$-diagonal entry at $\epsilon$, $|S|=k$:\\ $^S\!\Lmod_n=L_n+\epsilon\sum_{i\in S}e_ie_i^T=L_n+\epsilon V$.\\
	Let perturbed smallest eigenvalue (upto first order approx in $\epsilon$) be $\hat{\lambda}_{1}=\lambda_{1}+\epsilon \beta_{1}$\\
	and using Proposition~\ref{prop:eigenvecExpan:stackexchange}, the corresponding eigenvector (upto first order approx in $\epsilon$) be, \\
	$\hat{v}_1(\epsilon)=v_1+\epsilon \sum_{j=2, }^{n}\cfrac{v_1^T V v_j}{\lambda_{1}-\lambda_{j}}v_j$.\\
	For ease of manipulation, define the coefficient of $\epsilon$ as $\Cchi:=\sum_{j=2, }^{n}\cfrac{v_1^T V v_j}{\lambda_{1}-\lambda_{j}}v_i$ with $\Cchi \in \Rn$ $\implies \hat{v}_1 =v_1+ \epsilon \Cchi$. 
	
	\noindent Thus, for perturbed Laplacian matrix eigenvalue, eigenvector equation, we have: \vspace{-2mm}
	\begin{center}
		$(L_n+\epsilon V)(v_1+\epsilon \Cchi)=(\lambda_1+\epsilon \beta_{1})(v_1+\epsilon \Cchi).$
	\end{center}\vspace{-2mm}
	Upon simplifying the above equation, we get\vspace{-2mm}
	\begin{center}
		$L_nv_1+\epsilon L_n \Cchi+\epsilon V v_1+\epsilon^2 V \Cchi=\lambda_1 v_1+\epsilon\lambda_1\Cchi+\epsilon\beta_{1} v_1+\epsilon^2\beta_{1}\Cchi$.
	\end{center}
	On equating the terms of $1^{st}$ order in $\epsilon$, we get\vspace{-4mm} 
	\begin{center}
		$L_n \Cchi+V v_1=\lambda_1\Cchi+\beta_{1} v_1$,\\
		$L_n \sum_{j=2, }^{n}\cfrac{v_1^T V v_j}{\lambda_{1}-\lambda_{j}}v_j+\sum_{i\in S}e_ie_i^T v_1=\lambda_1\sum_{j=2, }^{n}\cfrac{v_1^T V v_j}{\lambda_{1}-\lambda_{j}}v_j+\beta_{1} v_1$.
	\end{center}
	On pre-multiplying both sides by $v_1^T$ in above equation and using orthogonality of eigenvectors of symmetric matrix, $v_1^T v_{j}=0$ for $j=2,3,\dots,n$, we get:  \vspace{-4mm}
	\begin{center}
		$\beta_{1} v_1^Tv_1=v_1^T\sum_{i\in S}e_ie_i^T v_1$ $\implies \beta_{1} =\cfrac{v_1^T\sum_{i\in S}e_ie_i^T v_1}{ v_1^Tv_1}=\cfrac{k}{n}$
	\end{center}\vspace{-2mm}
	Thus the first order approx of $\hat{\lambda}_1=\lambda_{1}+\epsilon \cfrac{k}{n}$ is independent of port indices and also independent of the number of edges in the graph $G_n$. 
\end{proof}

\begin{proof}[Proof of Theorem~\ref{thm:convexity}:]
	For $1$ optimal port selection, for $n$ nodes from Theorem~\ref{thm:1port_path_Eigenshift}, we have
	\begin{equation}\label{eq:opt1}
		\lambda^*_{min}(1)=\frac{\epsilon}{n} - \epsilon^2 \left[\frac{n^2-1}{12n^2}\right].
	\end{equation}
	For $2$ optimal ports selection for $n$ nodes from Theorem~\ref{thm:2ports_path_Eigenshift}, we have
	\begin{equation}\label{eq:opt2}
			\lambda^*_{min}(2)=\frac{2\epsilon}{n} -\epsilon^2\left[\frac{n^2-4}{12n^2} \right].
	\end{equation}
	On multiplying equation~\eqref{eq:opt1} by $2$ for linear scaling and subtracting from equation~\eqref{eq:opt2}, we get: \\
	$\lambda^*_{min}(2)-2\lambda^*_{min}(1)=\cfrac{2\epsilon}{n} -\epsilon^2\left[\cfrac{n^2-4}{12n^2} \right]-\cfrac{2\epsilon}{n} + \epsilon^2 \left[\cfrac{(n^2-1)}{6n^2}\right]=\epsilon^2\cfrac{(n^2+2)}{12n^2}>0$.\\
	Thus, $\lambda^*_{min}(2)>2\lambda^*_{min}(1)$ by considering upto $2^{nd}$ order terms in $\epsilon$. 
\end{proof}

\begin{proof}[Proof of Theorem~\ref{thm:LnL2nfirstEigenvalue}:]
	Consider a path graph $P_n$, Laplacian $L_n$ with $n$ odd is perturbed optimally for 1-port selection:\\ $\Lmod_n:=L_n+\epsilon e_{p^*}e_{p^*}^T$, with $p^*=\cfrac{(n+1)}{2}$.
	
	\noindent Further, path graph $P_{2n}$, Laplacian $L_{2n}$ with $n$ odd is perturbed optimally for  2-ports selection: \\$\Lmodd_{2n}:=L_{2n}+\epsilon \{e_{p^*_1}e_{p^*_1}^T+e_{p^*_2}e_{p^*_2}^T\}$, with $p_1^*=\cfrac{n+2}{4}$ ~and~$p_2^*=\cfrac{3n+2}{4}$. 
	
	
	\noindent From Theorem \ref{thm:1port_path_Eigenshift}, we have: 
	$\lambda_{min}(\Lmod_{n,\epsilon}, p)\Bigg |_{p=p^*}=\lambda^*_{min}(\Lmod_n)=\cfrac{\epsilon}{n}  - \epsilon^2\left[\cfrac{n^2-1}{12n^2}\right]$.\\	
	Further from Theorem \ref{thm:2ports_path_Eigenshift}, we have: 
	$\lambda_{min}(\Lmodd_{n,\epsilon}, p_1, p_2)\Bigg |_{p_1=p_1^*, p_2=p_2^*}=\lambda^*_{min}(\Lmodd_n)=\cfrac{2\epsilon}{n}   -\epsilon^2\left[\cfrac{n^2-4}{12n^2} \right]$.\\
	Thus, for size $n$ in $\Lmodd_{2n}$, replacing $n$ by $2n$, we get:\\ $\lambda^*_{min}(\Lmodd_{2n})=\cfrac{2\epsilon}{2n}   -\epsilon^2\left[\cfrac{4n^2-4}{12\times 4n^2} \right]=
	\cfrac{\epsilon}{n}   -\epsilon^2\left[\cfrac{n^2-1}{12 n^2} \right]=\lambda^*_{min}(\Lmod_n)$
	
	\noindent This completes the proof of Theorem~\ref{thm:LnL2nfirstEigenvalue}.
\end{proof}

\begin{proof}[Proof of Lemma~\ref{lem:1port_path_EigenShift}:]
	Suppose Laplacian matrix $L_n$ of path graph $P_n$ is perturbed to:\\ $\Lmod_{n}=L_n+\epsilon e_je_j^T$ where $j\in \{1,2,3, \dots n\}$ and small $|\epsilon|$.
	
	\noindent \textbf{Part~a}) 
	\noindent From Lemma~\ref{lem:kports_path_EigenShift}, for $k-$ports selection, we have: \vspace{-4mm} \begin{small}\[\lambda_{min}(^S\!\Lmod_n)=\cfrac{k\epsilon}{n}-\cfrac{\epsilon^2}{4n}\sum_{j=2}^{n}\cfrac{\Biggl\{\sum_{i=1}^{k}\cos\biggl(\cfrac{\pi (j-1) p_i}{n}-\cfrac{\pi (j-1) }{2n}\biggr)\Biggr\}^2}{\sin^2\left(0.5\pi (j-1)/n \right)\left\{\sum_{p=1}^{n}\cos^2\left(\frac{\pi p (j-1)}{n}-\frac{\pi(j-1)}{2n}\right)\right\}}+O(\epsilon^3)\] 
	\end{small}
	\noindent For $1-$port selection, using $k=1$ in above equation, we get: \\
	\noindent $\lambda_{min}(\Lmod_{n,\epsilon}, p)=\cfrac{\epsilon}{n}-\cfrac{\epsilon^2}{4n}\sum_{j=2}^{n}\cfrac{\cos^2\Biggl(\cfrac{\pi (j-1) p}{n}-\cfrac{\pi (j-1) }{2n}\Biggr)}{\sin^2\left(0.5\pi (j-1)/n \right)\biggl\{\sum_{p=1}^{n}\cos^2\left(\frac{\pi p (j-1)}{n}-\frac{\pi(j-1)}{2n}\right)\biggr\}}+O(\epsilon^3)$ terms.
	
	\noindent Further, Lemma~\ref{lem:perturb:shiftmax:one}, showed that \\
	$\lambda_{min}(\Lmod_{n,\epsilon}, p) = \cfrac{\epsilon}{n}  - \epsilon^2\left[\cfrac{(n^2-1)+12(p-p^*)^2}{12n^2}\right]  +O(\epsilon^3)$ terms.
	
	\noindent On combining the above two expressions of $\lambda_{\min}(\Lmod_n)$, we get the required result: 
	\begin{align*}
		\lambda_{min}(\Lmod_n, p)&=\epsilon \beta_1+\epsilon^2 \beta_2+O(\epsilon^3)=\frac{\epsilon}{n} - \epsilon^2 \left[\frac{(n^2-1)+12(p-p^*)^2}{12n^2}\right]	+O(\epsilon^3)\\
		&=\cfrac{\epsilon}{n}-\cfrac{\epsilon^2}{4n}\sum_{j=2}^{n}\cfrac{\cos^2\Biggl(\cfrac{\pi (j-1) p}{n}-\cfrac{\pi (j-1) }{2n}\Biggr)}{\sin^2\left(0.5\pi (j-1)/n \right)\biggl\{\sum_{p=1}^{n}\cos^2\left(\frac{\pi p (j-1)}{n}-\frac{\pi(j-1)}{2n}\right)\biggr\}}+O(\epsilon^3)		
	\end{align*}
	This completes the part a) of the proof. 
	
	\noindent \textbf{Part~b)} 
	\noindent We have upto $2^{nd}$ order of $\epsilon$:\\ 
	 $\hat{\lambda}_{min}(\Lmod_{n,\epsilon}, p)=\cfrac{\epsilon}{n}-\cfrac{\epsilon^2}{4n}\sum_{j=2}^{n}\cfrac{\cos^2\Biggl(\cfrac{\pi (j-1) p}{n}-\cfrac{\pi (j-1) }{2n}\Biggr)}{\sin^2\left(0.5\pi (j-1)/n \right)\biggl\{\sum_{i=1}^{n}\cos^2\left(\frac{\pi i (j-1)}{n}-\frac{\pi(j-1)}{2n}\right)\biggr\}}$
	
	Derivative of $\hat{\lambda}_{min}(\Lmod_n, p)$ w.r.t. $p$: \vspace{-2mm}
		\begin{equation*}
			\cfrac{d\hat{\lambda}_{min}(\Lmod_n, p)}{dp}=\cfrac{\epsilon^2}{4n}\sum_{j=2}^{n}\cfrac{\sin\Biggl(\cfrac{2\pi (j-1) p}{n}-\cfrac{\pi (j-1) }{n}\Biggr)\Biggl(\cfrac{\pi (j-1)}{n}\Biggr)}{\sin^2\left(0.5\pi (j-1)/n \right)\biggl\{\sum_{i=1}^{n}\cos^2\left(\frac{\pi i (j-1)}{n}-\frac{\pi(j-1)}{2n}\right)\biggr\}}
		\end{equation*}
 Derivative of $\hat{\lambda}_{min}(\Lmod_n, p)$ at $p=\frac{n+1}{2}$:\vspace{-2mm}
		\begin{align*}
			\cfrac{d\hat{\lambda}_{min}(\Lmod_n, p)}{dp}&=\cfrac{\epsilon^2}{4n}\sum_{j=2}^{n}\cfrac{\sin\Biggl(\cfrac{\pi (j-1)(n+1)}{n}-\cfrac{\pi (j-1) }{n}\Biggr)\Biggl(\cfrac{\pi (j-1)}{n}\Biggr)}{\sin^2\left(0.5\pi (j-1)/n \right)\biggl\{\sum_{i=1}^{n}\cos^2\left(\frac{\pi i (j-1)}{n}-\frac{\pi(j-1)}{2n}\right)\biggr\}}\\
			=&\cfrac{\epsilon^2}{4n}\sum_{j=2}^{n}\cfrac{\sin\biggl(\pi (j-1)\biggr)\Biggl(\cfrac{\pi (j-1)}{n}\Biggr)}{\sin^2\left(0.5\pi (j-1)/n \right)\biggl\{\sum_{i=1}^{n}\cos^2\left(\frac{\pi i (j-1)}{n}-\frac{\pi(j-1)}{2n}\right)\biggr\}}=0.
		\end{align*}
		Thus, $p=\frac{n+1}{2}$ is a stationary point. We now check $2^{nd}$ derivative condition. 
		
\noindent Double derivative of $\hat{\lambda}_{min}(\Lmod_n, p)$ w.r.t. $p$: \vspace{-2mm}
		\begin{equation*}
			\cfrac{d^2\hat{\lambda}_{min}(\Lmod_n, p)}{dp^2}=\cfrac{\epsilon^2}{2n}\sum_{j=2}^{n}\cfrac{\cos\Biggl(\cfrac{2\pi (j-1) p}{n}-\cfrac{\pi (j-1) }{n}\Biggr)\Biggl(\cfrac{\pi (j-1)}{n}\Biggr)^2}{\sin^2\left(0.5\pi (j-1)/n \right)\biggl\{\sum_{i=1}^{n}\cos^2\left(\frac{\pi i (j-1)}{n}-\frac{\pi(j-1)}{2n}\right)\biggr\}}.
		\end{equation*}
	Evaluating double derivative of $\hat{\lambda}_{min}(\Lmod_n, p)$ w.r.t. $p$ at $p=\frac{n+1}{2}$: \vspace{-2mm}
		\begin{align*}
			\cfrac{d^2\hat{\lambda}_{min}(\Lmod_n, p)}{dp^2}&=\cfrac{\epsilon^2}{2n}\sum_{j=2}^{n}\cfrac{\cos\Biggl(\cfrac{\pi (j-1) (n+1)}{n}-\cfrac{\pi (j-1) }{n}\Biggr)\Biggl(\cfrac{\pi (j-1)}{n}\Biggr)^2}{\sin^2\left(0.5\pi (j-1)/n \right)\biggl\{\sum_{i=1}^{n}\cos^2\left(\frac{\pi i (j-1)}{n}-\frac{\pi(j-1)}{2n}\right)\biggr\}}\\
			&=\cfrac{\epsilon^2}{2n}\sum_{j=2}^{n}\cfrac{\cos\biggl(\pi (j-1) \Biggr)\Biggl(\cfrac{\pi (j-1)}{n}\Biggr)^2}{\sin^2\left(0.5\pi (j-1)/n \right)\biggl\{\sum_{i=1}^{n}\cos^2\left(\frac{\pi i (j-1)}{n}-\frac{\pi(j-1)}{2n}\right)\biggr\}}
		\end{align*}\vspace{-2mm}
		\begin{equation*}
			\implies	\cfrac{d^2\hat{\lambda}_{min}(\Lmod_n, p)}{dp^2}\Bigg|_{p=\frac{n+1}{2}}=\cfrac{\epsilon^2}{2n}\sum_{j=2}^{n}\cfrac{(-1)^{j-1} \left(\cfrac{\pi (j-1)}{n}\right)^2}{ \sin^2\left(0.5\pi (j-1)/n \right)\biggl\{\sum_{i=1}^{n}\cos^2\left(\frac{\pi i (j-1)}{n}-\frac{\pi(j-1)}{2n}\right)\biggr\}}.
		\end{equation*}
		Let $\mathscr{N}_j=\sum_{i=1}^{n}\cos^2\left(\frac{\pi i (j-1)}{n}-\frac{\pi(j-1)}{2n}\right)$. Since $n$ is odd, the above sum is a sum of  $\frac{n-1}{2}$ pairs of successive differences:  \vspace{-2mm}
		\begin{equation}\label{eq:dd_p}
			\cfrac{d^2\hat{\lambda}_{min}(\Lmod_n, p)}{dp^2}=\cfrac{2\epsilon^2}{n}\sum_{q=1}^{\frac{n-1}{2}}\left\{\cfrac{\left(\cfrac{\pi 2q}{2n}\right)^2 }{\mathscr{N}^2_{2q}\sin^2\left(\cfrac{\pi 2q}{2n}\right)} \right \}-\left \{\cfrac{\left(\cfrac{\pi (2q-1)}{2n}\right)^2 }{\mathscr{N}^2_{2q-1}\sin^2\left(\cfrac{\pi (2q-1)}{2n}\right)} \right \}.\\
		\end{equation}
		Each pair is positive since for $q \in \{1, 2, \dots \frac{n-1}{2}\}$, $\mathscr{N}$ is constant and since $\frac{x}{\sin(x)}$ is monotonically increasing for $x \in (0, \frac{\pi}{2})$ \footnote{We use the fact that the $\cfrac{x}{\sin(x)}$ is a monotonically increasing function in the interval $[0, \frac{\pi}{2})$ and that this expression is of the form $\cfrac{x_2^2}{\sin^2(x_2)}<\cfrac{x_1^2}{\sin^2(x_1)}$ for $0<x_1<x_2<\frac{\pi}{2}$.}.
%
		\noindent Thus, we conclude that $\cfrac{d^2\hat{\lambda}_0}{dp^2}>0$, for $p=p^*$.
		 
%
%

	\noindent Therefore, $p^*$ is a local minima (considering $p\in\mathbb{R}$). This completes the part b) of the proof and thus this completes the proof of Lemma~\ref{lem:1port_path_EigenShift}.	
\end{proof} 

\begin{proof}[Proof of Lemma~\ref{lem:2ports_path_EigenShift}:]
%
	\noindent From Lemma~\ref{lem:kports_path_EigenShift}, we have: \vspace{-4mm} \begin{small}\[\lambda_{min}(^S\!\Lmod_n)=\cfrac{k\epsilon}{n}-\cfrac{\epsilon^2}{4n}\sum_{j=2}^{n}\cfrac{\Biggl\{\sum_{i=1}^{k}\cos\biggl(\cfrac{\pi (j-1) p_i}{n}-\cfrac{\pi (j-1) }{2n}\biggr)\Biggr\}^2}{\sin^2\left(0.5\pi (j-1)/n \right)\left\{\sum_{i=1}^{n}\cos^2\left(\pi  (j-1)(i-0.5)/n\right)\right\}}+O(\epsilon^3)\] 
	\end{small}

\noindent So, for $2$-ports selection, using $k=2$, in above equation and we get: \vspace{-2mm}\\ 
\scalebox{0.9}{$\lambda_{min}(\Lmodd_{n,\epsilon}, p_1, p_2)=\cfrac{2\epsilon}{n}-\cfrac{\epsilon^2}{4n}\sum_{j=2}^{n}\cfrac{\Biggl\{\cos\biggl(\cfrac{\pi (j-1) p_1}{n}-\cfrac{\pi (j-1) }{2n}\biggr)+\cos\biggl(\cfrac{\pi (j-1) p_2}{n}-\cfrac{\pi (j-1) }{2n}\biggr)\Biggr\}^2}{\sin^2\left(0.5\pi (j-1)/n \right)\left\{\sum_{i=1}^{n}\cos^2\left(\pi  (j-1)(i-0.5)/n\right)\right\}}+O(\epsilon^3)$}

	\noindent Further, from Lemma~\ref{lem:perturb:shiftmax:two}, it follows that\\
	 $\lambda_{min}(\Lmodd_{n,\epsilon},p_1, p_2) \approx \cfrac{2\epsilon}{n}  -\epsilon^2 \left[\cfrac{n^2-4}{12n^2}+2\left\{\cfrac{(p_1-p_1^*)^2+(p_2-p_2^*)^2}{n^2}\right\} \right]$
	 
	\noindent $\implies 
	 \lambda_{min}(\Lmodd_{n,\epsilon}, p_1, p_2) = \cfrac{2\epsilon}{n}  -\epsilon^2 \left[\cfrac{n^2-4}{12n^2}+2\left\{\cfrac{(p_1-p_1^*)^2+(p_2-p_2^*)^2}{n^2}\right\} \right]  +O(\epsilon^3)$ 
	 
	\noindent This completes the proof. 	
\end{proof} 

\begin{proof}[Proof of Lemma~\ref{lem:kports_path_EigenShift}:]
	\noindent We know for path graph from Proposition~\ref{prop:pathEigvalEigvec}: $\mbox{for}~j=\{1,2, \dots,n\}$\\	
	\noindent The ordered eigenvalue, $\lambda_j=2\Biggl\{1-\cos\biggl(\cfrac{\pi (j-1)}{n}\biggr)\Biggr\}$\\
	\noindent The corresponding eigenvector, $v_j(p)=\cos\biggl(\cfrac{\pi p (j-1)}{n}-\cfrac{\pi (j-1) }{2n}\biggr)$\\
	The normalized eigenvector, \\$\hat{v}_j(p)=\cos\left(\cfrac{\pi p (j-1)}{n}-\cfrac{\pi (j-1) }{2n}\right)\bigg/\sqrt{\sum_{p=1}^{n}\cos^2\left(\cfrac{\pi p (j-1)}{n}-\cfrac{\pi(j-1)}{2n}\right)}$. \\
	Let $\mathscr{N}_j=\sqrt{\sum_{p=1}^{n}\cos^2\left(\cfrac{\pi p (j-1)}{n}-\cfrac{\pi(j-1)}{2n}\right)}$
	$\implies \hat{v}_j(p)=\cos\left(\cfrac{\pi p (j-1)}{n}-\cfrac{\pi (j-1) }{2n}\right)/\mathscr{N}_j$ 
	
	\noindent Using the expressions of $v_j$ and $\lambda_j$ in equation \eqref{eq:perturbed:eigenvector} of Proposition~\ref{prop:eigenvecExpan:stackexchange} (specialized to $L_n$ of path graph), we get the perturbed eigenvector up-to $2^{nd}$ order in $\epsilon$: \\
	$\hat{\tilde{v}}_1(\epsilon)=\hat{v}_1+\epsilon \sum_{j=2}^{n}\cfrac{\cfrac{1}{\sqrt{n}}\sum_{i=1}^{k}\left\{\cos\left(\cfrac{\pi (j-1) p_i}{n}-\cfrac{\pi (j-1) }{2n}\right)\bigg/\mathscr{N}_{j_i}\right\}}{-4\sin^2\left(0.5\pi (j-1)/n \right)}\hat{v}_j+\epsilon^2\psi=\hat{v}_1+\epsilon\Cchi+\epsilon^2\psi\quad$ \\	where $\Cchi=\sum_{j=2}^{n}\cfrac{\cfrac{1}{\sqrt{n}}\sum_{i=1}^{k}\left\{\cos\left(\cfrac{\pi (j-1) p_i}{n}-\cfrac{\pi (j-1) }{2n}\right)\bigg/\mathscr{N}_{j_i}\right\}}{-4\sin^2\left(0.5\pi (j-1)/n \right)}\hat{v}_j$ with $\Cchi, \psi \in \Rn$ \\
	Similarly, consider the perturbed eigenvalue up-to $2^{nd}$ order in $\epsilon$:  $\tilde{\lambda}_1=\lambda_{1}+\epsilon \beta_{1}+\epsilon^2 \beta_{2}$.\\
	Perturbed Laplacian matrix, $\Lmod_n=L_n+\epsilon V=L_n+\epsilon \sum_{i=1}^{k}e_{p_i} e_{p_i}^T$\\
	\noindent Eigenvalue, eigenvector equation for perturbed Laplacian matrix we have: \vspace{-2mm}
	\begin{center}
		$\Lmod_n \hat{\tilde{v}}_1(\epsilon)=\tilde{\lambda}_{1}\hat{\tilde{v}}_1(\epsilon)$\\
	\end{center}\vspace{-2mm}
	By substituting the values of $\Lmod_n$, $\hat{\tilde{v}}_1(\epsilon)$, and $\tilde{\lambda}_{1}$, we obtain: \vspace{-2mm}	
	\begin{equation}\label{eq:qPerturbed_EvecEval}
		(L_n+\epsilon V)(\hat{v}_1+\epsilon\Cchi+\epsilon^2\psi)=(\lambda_{1}+\epsilon \beta_{1}+\epsilon^2 \beta_{2})(\hat{v}_1+\epsilon\Cchi+\epsilon^2\psi).
	\end{equation}
	Upon simplifying the above equation:\vspace{-2mm}
	\begin{center}
		$L_n\hat{v}_1+\epsilon L_n \Cchi+\epsilon^2 L_n \psi+\epsilon V \hat{v}_1+\epsilon^2 V \Cchi +\epsilon^3 V\psi=\lambda_{1}\hat{v}_1+\epsilon\lambda_{1}\Cchi+\epsilon^2\lambda_{1}\psi+\epsilon\beta_{1}\hat{v}_1+\epsilon^2\beta_{1}\Cchi+\epsilon^3\beta_{1}\psi+\epsilon^2\beta_{2}\hat{v}_1+\epsilon^3\beta_{2}\Cchi+\epsilon^4\beta_{2}\psi$
	\end{center}
	On comparing `$\epsilon$' terms: 
		$L_n\Cchi+V\hat{v}_1=\lambda_1 \Cchi+\beta_{1}\hat{v}_1$. Since, $\lambda_{1}(L_n)=0$ and $\hat{v}^T_1 L_n=0$.\\
		On pre-multiplying both sides by $\hat{v}^T_1$, we get \vspace{-2mm}
	\begin{equation}\label{eq:k-ports:epsilonTerm}
		\beta_{1}=\hat{v}^T_1 V \hat{v}_1 =\hat{v}^T_1 \left(\sum_{i=1}^{k}e_{p_i} e_{p_i}^T\right) \hat{v}_1~~ \implies~~ \mathbf{\beta_{1}=\cfrac{k}{n}}
	\end{equation}	
	Similarly, on comparing `$\epsilon^2$' terms: 
	$L_n\psi+(\sum_{i=1}^{k}e_{p_i} e_{p_i}^T)\Cchi=\lambda_{1}\psi+\beta_{1} \Cchi+\beta_{2} \hat{v}_1$. \\
	Since, for symmetric matrix eigenvector corresponding to distinct eigenvalues are orthogonal,  $\hat{v}^T_1 v_{j}=0$ for $j=\{2,3,\dots, n\}$ $\implies\hat{v}^T_1 \Cchi=0$. Also, $\lambda_{1}(L_n)=0$ and $\hat{v}^T_1 L_n=0$. \\
	On pre-multiplying both sides by $\hat{v}^T_1$, we get \vspace{-2mm}
	\begin{equation*}
	\hat{v}^T_1(\sum_{i=1}^{k}e_{p_i} e_{p_i}^T)\Cchi=\hat{v}^T_1 \beta_{2} \hat{v}_1~~ \implies \beta_{2}=\hat{v}^T_1(\sum_{i=1}^{k}e_{p_i} e_{p_i}^T)\Cchi
	\end{equation*}
	\noindent Using the value of $\Cchi$ in above equation, we get: \\
	$\beta_{2}=\hat{v}^T_1(\sum_{i=1}^{k}e_{p_i} e_{p_i}^T)\sum_{j=2}^{n}\cfrac{\cfrac{1}{\sqrt{n}}\left\{\sum_{i=1}^{k}\cos\left(\cfrac{\pi (j-1) p_i}{n}-\cfrac{\pi (j-1) }{2n}\right)\bigg/ \mathscr{N}_{j_i}\right\}}{-4\sin^2\left(0.5\pi (j-1)/n \right)}\hat{v}_j$\\
	
	\noindent Using value of $\hat{v}_1$, we get: \\ 
	$ \beta_{2}=\cfrac{1}{\sqrt{n}}(\sum_{i=1}^{k} e_{p_i}^T)\sum_{j=2}^{n}\cfrac{\cfrac{1}{\sqrt{n}}\left\{\sum_{i=1}^{k}\cos\left(\cfrac{\pi (j-1) p_i}{n}-\cfrac{\pi (j-1) }{2n}\right)\bigg/ \mathscr{N}_{j_i}\right\}}{-4\sin^2\left(0.5\pi (j-1)/n \right)}\hat{v}_j$\\
	On simplifying and using value of $\hat{v}_j$, we get: 	\vspace{-4mm}
	\begin{equation}\label{eq:k-ports:epsilon2Term}		 
	\beta_{2}=\cfrac{1}{n}\sum_{j=2}^{n}\cfrac{\left\{\sum_{i=1}^{k}\cos\left(\cfrac{\pi (j-1) p_i}{n}-\cfrac{\pi (j-1) }{2n}\right)\bigg/ \mathscr{N}_{j_i}\right\}^2}{-4\sin^2\left(0.5\pi (j-1)/n \right)}
	\end{equation}
	Hence, $\lambda_{min}(\Lmod_{n,\epsilon}, p)=\tilde{\lambda}_1=\lambda_{1}+\epsilon \beta_{1}+\epsilon^2 \beta_{2}$\\
	Using values of $\beta_{1}$ and $\beta_{2}$ from equation~\eqref{eq:k-ports:epsilonTerm},~\eqref{eq:k-ports:epsilon2Term} and $\lambda_{1}(L_n)=0$, we get:\vspace{-3mm} \\
	\begin{equation*}
		\lambda_{min}(\Lmod_{n,\epsilon}, p)=\cfrac{k\epsilon}{n}-\cfrac{\epsilon^2}{4n}\sum_{j=2}^{n}\cfrac{\left\{\sum_{i=1}^{k}\cos\left(\cfrac{\pi (j-1) p_i}{n}-\cfrac{\pi (j-1) }{2n}\right)\bigg/ \mathscr{N}_{j_i}\right\}^2}{\sin^2\left(0.5\pi (j-1)/n \right)}
	\end{equation*} \\
	This completes the proof of Lemma~\ref{lem:kports_path_EigenShift}.	
\end{proof} 
\begin{proof}[Proof of Lemma~\ref{lem:perturbedLvsN}:]
	\noindent Consider $\Lmod_n:= L_n + e_1e_1^T$ which is a tridiagonal pseudo-Toeplitz\footnote{Pseudo-Toeplitz matrix: Matrix $M$ in Proposition~\ref{prop:characteristic:Tri}: $a_1=a_2=\cdots=a_{m-1}=2$, $a_m=1$ and $b_i=c_i=-1$.} matrix. 
	
	\noindent Using the result from \cite{KulkarniEtal}, we get
	$\lambda(\Lmod_n)=2-2\cos\Bigl(\cfrac{\pi k}{2n+1}\Bigr)$,  where $k=1,2,\dots, n$. \\
	So, for $k=1$, $\lambda_{1}(\Lmod_n)=\lambda_{\min}(\Lmod_n)=2-2\cos\Bigl(\cfrac{\pi }{2n+1}\Bigr)$.\\
	Consider,  $\Lmodd_{2n}=L_{2n}+e_1e_1^T+e_{2n}e_{2n}^T$ which is a tridiagonal Toeplitz\footnote{Pseudo-Toeplitz matrix: Matrix $M$ in Proposition~\ref{prop:characteristic:Tri}: $a_1=a_2=\cdots=a_{m-1}=a_m=2$, and $b_i=c_i=-1$.} matrix.
	
	\noindent Hence, again from \cite{KulkarniEtal}, we get	 
	$\lambda(T_n)=2-2\cos\Bigl(\cfrac{\pi k}{n+1}\Bigr)~~\mbox{where}~~k=1,2,\dots, n $
	
	\noindent Therefore for $2n\times 2n$: 
	$\lambda(\Lmodd_{2n})=2-2\cos\Bigl(\cfrac{\pi k}{2n+1}\Bigr)~~ \mbox{where}~~k=1,2,\dots, 2n$
	
	\noindent So, for $k=1$,  $\lambda_{1}(\Lmodd_{2n})=\lambda_{min}(\Lmodd_{2n})=2-2\cos\Bigl(\cfrac{\pi}{2n+1}\Bigr)=\lambda_{min}(\Lmod_{n})$. \\
	Hence, $\lambda_{\min}(\Lmod_n)$ and $\lambda_{min}(\Lmodd_{2n})$ are equal. 
	
	\noindent \textbf{To show monotonically decreasing}: \\
	On differentiating $\lambda_{\min}(\Lmod_n)=2-2\cos\Bigl(\cfrac{\pi }{2n+1}\Bigr)$ w.r.t. $n$, we get\\
	$\cfrac{\partial\lambda_{\min}(\Lmod_{n})}{\partial n}=-\cfrac{4\pi}{(2n+1)^2}\sin\Bigl(\cfrac{\pi}{2n+1}\Bigr)$ $\implies \cfrac{\partial\lambda_{\min}(\Lmod_{n})}{\partial n}<0$. 
	
	\noindent Hence, $\lambda_{\min}(\Lmod_n)$ is monotonically decreasing w.r.t. $n$.\\
	This completes the proof of Lemma~\ref{lem:perturbedLvsN}.	
\end{proof}

\begin{proof}[Proof of Lemma~\ref{lem:perturb:epsi:num:den:series}:]
	Consider the polynomials 
	$a(s) = a_0 + a_1s + a_2 s^2+ \cdots +s^n$ and $b(s) = b_0 + b_1s + b_2 s^2  + \cdots+ b_{n-1}s^{n-1}$ be two finite-degree polynomials with $a_0 = 0$,  $a_1 \ne 0$, and $b_0 \ne 0$.\\
	Define the polynomial $p_\epsilon(s): = a(s) + \epsilon b(s)$. 
	Clearly $s=0$ is a root of $p_\epsilon(s)$ for $\epsilon = 0$.\\ 
	In addition consider the dependence $\lambda_{min}$ on $\epsilon$; 
	expand $\lambda_{min}(\epsilon)$ about the origin in a series expansion:\\
	$\lambda_{min}(\epsilon) = \beta_1 \epsilon + \beta_2 \epsilon^2 + \beta_3 \epsilon^3 + \cdots$.
	
	\noindent Rearrange $a(s) + \epsilon b(s)=0$, to get $\epsilon(s)  = \cfrac{a(s)}{b(s)}$, and to make the dependence of $\epsilon$ on $s$ more explicit, express\\
	$\epsilon (s)  = \cfrac{-a(s)}{b(s)}=\alpha_1 s+\alpha_2 s^2$.\\
	\noindent Evaluating $\displaystyle \frac{d\epsilon}{ds}$ at $s=0$, we
	obtain:
	$\displaystyle \frac{d\epsilon}{ds}\bigg|_{s=0} =\alpha_1= \cfrac{-a_1}{b_0}$
	
	\noindent Evaluating $\displaystyle \frac{d^2\epsilon}{ds^2}$ at $s=0$, we obtain: 
	$\displaystyle \frac{d^2\epsilon}{ds^2}\bigg|_{s=0} =2\alpha_2= \cfrac{2a_1 b_1 - 2a_2 b_0 }{b_0^2 }$ $\implies ~\alpha_2= \cfrac{a_1 b_1 - a_2 b_0 }{b_0^2 }$
	
	\noindent Further, consider, $s=\lambda_{min}(\epsilon) \approx \beta_1 \epsilon+\beta_2 \epsilon^2$.
	
	\noindent Using standard formulae (see~\cite{Weisstein}) that relate the coefficients in 
	the Taylor series expansions of $f(\cdot)$ and $f^{-1}(\cdot)$, we get the desired relations in equation~\eqref{eq:taylor:inverse}.
	$\beta_1=\displaystyle\frac{1}{\alpha_1}$ and $\beta_2=\displaystyle-\frac{\alpha_2}{\alpha_1^3}$\\
	Therefore,
	$\beta_1=\cfrac{-b_0}{a_1}$ and $\beta_2=\cfrac{a_1 b_1 - a_2 b_0 }{b_0^2 }\times \cfrac{b_0^3}{a_1^3}=\cfrac{a_1 b_1b_0 - a_2 b^2_0 }{a_1^3 }$ \\
	This completes the proof.
\end{proof}

\begin{proof}[Proof of Lemma~\ref{lem:perturb:doubleepsi:num:den:series}:]
	Consider polynomials $a(s) = a_0 + a_1s + a_2 s^2+ \cdots +s^n$, $b(s) = b_0 + b_1s + b_2 s^2  + \cdots+ b_{n-1}s^{n-1}$
	and $c(s) = c_0 + c_1s + c_2 s^2  + \cdots+ c_{n-2}s^{n-2}$ such that\\
	$a_0 = 0$,  $a_1 \ne 0$, $b_0 \ne 0$ and $c_0 \ne 0$.
	Define the polynomial $p_\epsilon(s): = a(s) + \epsilon b(s)+ \epsilon^2 c(s)$. 
	
	\noindent In addition consider the root $\lambda_{min}$ and
	the dependence of this root on $\epsilon$. Further, note $\lambda_{min}(0)=0$.
	
	\noindent Expand $\lambda_{min}(\epsilon)$ about the origin in a series expansion:\\
	$\lambda_{min}(\epsilon) = \beta_1 \epsilon + \beta_2 \epsilon^2 + \beta_3 \epsilon^3 + \cdots$.
	
	\begin{equation}\label{eq:epsilon:double}
	\mbox{Consider a series expansion of }\epsilon(s) \mbox{ about } \epsilon=0 \hspace*{2cm}\epsilon(s)  = \alpha_1 s+\alpha_2 s^2+\alpha_3 s^3+ \cdots. \hspace*{1.5cm}
	\end{equation}
	On differentiating $p_{\epsilon}(s)$ w.r.t. `$s$', we get: \\
	$p'_{\epsilon}(s)=0$ 	$\implies a'+\epsilon b'+\epsilon^2 c' +\epsilon'b+2\epsilon \epsilon'c=0$ 
	$\implies \epsilon'= \cfrac{-(a'+ \epsilon b' + \epsilon^2 c')}{b +2 \epsilon c}$
	
	\noindent We know from above expression and \eqref{eq:epsilon:double}, $\alpha_1=\epsilon'(0)=\cfrac{-a_1}{b_0}$
	
	\noindent Further, on differentiating $p_{\epsilon^2}(s)$ twice w.r.t. `$s$', we get: 
	$a''+\epsilon b''+\epsilon'' b+2\epsilon' b'+4 \epsilon \epsilon' c'+2(\epsilon')^{2}c+\epsilon^2 c''+2\epsilon \epsilon'' c=0$\\
	$\implies \epsilon''(0)=2\cfrac{a_1b_1b_0-a_2b_0^2-a_1^2c_0}{b_0^3}$\\
	From \eqref{eq:epsilon:double}, $\alpha_2=\cfrac{\epsilon''(0)}{2}=\cfrac{a_1b_1b_0-a_2b_0^2-a_1^2c_0}{b_0^3}$
	
	\noindent Now consider, $s=\lambda_{min}(\epsilon) \approx \beta_1 \epsilon+\beta_2 \epsilon^2$.
	
	\noindent Using standard formulae (see~\cite{Weisstein})
	that relate the coefficients in 
	the Taylor series expansions of $f(\cdot)$ and $f^{-1}(\cdot)$, we get the desired relations in equation~\eqref{eq:taylor:doubleinverse}. 
	$\beta_1=\cfrac{1}{\alpha_1}$ and $\beta_2=-\cfrac{\alpha_2}{\alpha_1^3}$\\
	Therefore, using values of $\alpha_i$, we get,\\ $\beta_1=\cfrac{-b_0}{a_1}$ and  $\beta_2=\cfrac{(a_1b_1b_0-a_2b_0^2-a_1^2c_0)}{b_0^3}\times \cfrac{b_0^3}{a_1^3}=\cfrac{(a_1b_1b_0-a_2b_0^2-a_1^2c_0)}{a_1^3}$\\
	This completes the proof of Lemma~\ref{lem:perturb:doubleepsi:num:den:series}.
\end{proof}

\begin{proof}[Proof of Lemma~\ref{lem:perturb:shiftmax:one}:]
%
	\noindent \textbf{Part~a)} 
%
	\noindent Using the results from Lemma~\ref{lem:perturb:epsi:num:den} and Proposition~\ref{prp:characteristic:path:explicit} for $n$ odd, we have:\\
	 $a_1=n,~~a_2=-\cfrac{n(n^2-1)}{6}$, 
	$b_0=-1,	~~b_1=\{\cfrac{(n-1)^2 + 2(n-1)}{4} + (j-p^*)^2\}$.
	 
	\noindent Further from equation \eqref{eq:taylor:inverse} of Lemma~\ref{lem:perturb:epsi:num:den:series}, we have\\ $\beta_1=\cfrac{-b_0}{a_1}=\cfrac{1}{n}$ and  $\beta_2=\cfrac{a_1b_1b_0-a_2b_0^2}{a_1^3}=\cfrac{-n\{\frac{(n-1)^2+2(n-1)}{4}+(j-p^*)^2\}+\frac{n(n^2-1)}{6}}{n^3}$
	
	\noindent $\implies \beta_2=\cfrac{n^2-1}{6n^2}+\cfrac{\frac{(n-1)^2 + 2(n-1)}{4} + (j-p^*)^2}{n^2}=-\{\cfrac{(n^2-1)+12(j-p^*)^2}{12n^2}\}$
	
	\noindent Thus, $\lambda_{min}$ upto $2^{nd}$ in $\epsilon$, is\vspace{-2mm}
	\begin{equation*}
		\lambda_{min}(\epsilon,j) = \beta_1 \epsilon + \beta_2 \epsilon^2 
	\end{equation*}
	Using the values of $\beta_1,~\beta_2$ in above equation, we get:
	\begin{equation}\label{eq:lambda_0:approx:two}
		\lambda_{min}(\epsilon,j) = \frac{\epsilon}{n}  - \epsilon^2\left[\frac{(n^2-1)+12(j-p^*)^2}{12n^2}\right]  
	\end{equation}
	This completes the proof of part (a).\\
	
	\noindent \textbf{Part b)}
\noindent $\lambda_{min}(\epsilon,j) = \cfrac{\epsilon}{n}  - \epsilon^2\left[\cfrac{(n^2-1)+12(j-p^*)^2}{12n^2}\right] +O(\epsilon^3)$ is quadratic in $j$ (after ignoring terms of $\epsilon^3$ and higher) and this curve is a parabola that clearly peaks at $j=p^*$. 
%
	\noindent Thus $j=p^*$ globally maximizes the $\lambda_{min}(\epsilon,j)$ and the corresponding maximum value (upto $2^{nd}$ order in $\epsilon$) is\\ $\lambda_{min}(\epsilon,p^*) = \cfrac{\epsilon}{n}  - \epsilon^2\left[\cfrac{n^2-1}{12n^2}\right] $ 
	
%

	This completes the proof of part b) and proof of Lemma~\ref{lem:perturb:shiftmax:one}.
\end{proof}

\begin{proof}[Proof of Lemma~\ref{lem:perturb:shiftmax:two}:]
%
 	\noindent \textbf{Part~a) }
	\noindent Using results from Lemmas \ref{lem:perturb:epsi:num:den:two} and Proposition~\ref{prp:characteristic:path:explicit} for $n$ even, we have:\\ $a_1=-n,~~a_2=\cfrac{n(n^2-1)}{6},~~c_0=(j_2-j_1)$\\
	$b_0=2,	~~b_1=-\biggl(\cfrac{3n^2-4}{8} -\cfrac{n(j_1-j_2)}{2}+(j_1-p^*_1)^2+(j_2-p^*_2)^2\biggr)$.\\ 
	Further from equation \eqref{eq:taylor:doubleinverse} of Lemma~\ref{lem:perturb:doubleepsi:num:den:series} we have $\beta_1=\cfrac{-b_0}{a_1}=\cfrac{-2}{-n}=\cfrac{2}{n}$ and \\
		 $\beta_2=\cfrac{b_1b_0}{a_1^2}-\cfrac{a_2b^2_0}{a_1^3}-\cfrac{c_0}{a_1}		 =\cfrac{n(j_1-j_2)-2(j_1-p_1^*)^2-2(j_2-p_2^*)^2-\frac{3n^2-4}{4}}{n^2}+\cfrac{2(n^2-1)}{3n^2}+\cfrac{j_2-j_1}{n}$\\
		 $\implies \beta_2=\left[\cfrac{n^2-4}{-12n^2}-2\left\{\cfrac{(j_1-p_1^*)^2+(j_2-p_2^*)^2}{n^2}\right\} \right]$
		 
	\noindent Thus, $\lambda_{min}$ upto $2^{nd}$ order in $\epsilon$, is\vspace{-2mm}
	\begin{equation*}
		\lambda_{min}(\epsilon,j_1, j_2)=\beta_1 \epsilon + \beta_2 \epsilon^2 
	\end{equation*}
	Using the values of $\beta_1,~\beta_2$ in above equation, we get:\vspace{-1mm}
	\begin{equation}\label{eq:lambda_0:approx:two}
		\lambda_{min}(\epsilon, j_1, j_2) = \cfrac{2\epsilon}{n}  -\epsilon^2 \left[\cfrac{(n^2-4)+24\left\{(p_1-p_1^*)^2+(p_2-p_2^*)^2\right\}}{12n^2} \right] 
	\end{equation}
	
	\noindent \textbf{Part~b)} 
%
	\noindent $\lambda_{min}(j_1, j_2) = \cfrac{2\epsilon}{n}  -\epsilon^2 \left[\cfrac{(n^2-4)+24\left\{(p_1-p_1^*)^2+(p_2-p_2^*)^2\right\}}{12n^2} \right] +O(\epsilon^3)$ is quadratic in $j_1$ and $j_2$ (after ignoring terms of $\epsilon^3$ and higher) and this curve clearly peaks at $j_1=p_1^*$ and $j_2=p_2^*$. \\
	\noindent Thus $j_1=p_1^*$, $j_2=p_2^*$,  maximizes the $\lambda_{min}(\epsilon, j_1, j_2)$ and 
%
%
%
the maximum value is (upto $2^{nd}$ order in $\epsilon$): \vspace{-2mm}
	 \begin{equation*}
	 	\lambda_{min}(\epsilon, p_1^*, p_2^*) = \frac{2\epsilon}{n} -\epsilon^2 \left[\frac{n^2-4}{12n^2}\right]
	 \end{equation*}
	 This complete the proof of part b) and proof of Lemma~\ref{lem:perturb:shiftmax:two}.
\end{proof}

\end{document}